\tikzset{every picture/.style={thick,>=latex}} 
\theoremstyle{plain} 
\newtheorem{thm}{Theorem}[section]
\newtheorem{prop}{Proposition}[section]
\newtheorem{lem}{Lemma}[section]
\newtheorem{cor}{Corollary}[section]
\theoremstyle{definition}
\newtheorem{defn}{Definition}[section]
\theoremstyle{remark}
\newtheorem{rem}{Remark}[section]
\newtheorem*{rem*}{Remark}
\newcommand{\addQEDstyle}[2]{\AtBeginEnvironment{#1}{\pushQED{\qed}\renewcommand{\qedsymbol}{#2}}
\AtEndEnvironment{#1}{\popQED}} 
\DeclareMathOperator{\vol}{vol}
\DeclareMathOperator{\Tr}{Tr}
\DeclareMathOperator{\SU}{SU}
\DeclareMathOperator{\SL}{SL}
\DeclareMathOperator{\Lie}{Lie}
\DeclareMathOperator{\dR}{dR}
\DeclareMathOperator{\Ad}{Ad}
\DeclareMathOperator{\fl}{fl}
\DeclareMathOperator{\Hom}{Hom}
\DeclareMathOperator{\rk}{rk}
\DeclareMathOperator{\Dol}{Dol}
\DeclareMathOperator{\B}{B}
\DeclareMathOperator{\End}{End}
\DeclareMathOperator{\HW}{HW}
\DeclareMathOperator{\T}{T}
\DeclareMathOperator{\Ext}{Ext}
\renewcommand{\subset}{\subseteq}
\newcommand{\Dom}{\mathcal{D}}
\newcommand{\DomC}{\Dom^{\mathbb{C}}}
\newcommand{\DomQ}{\Dom_{Q}}
\begin{document}

\author{J\o{}rgen Ellegaard Andersen, Alessandro Malus\`{a}, Gabriele Rembado}

\title[Genus-one complex quantum Chern--Simons]{Genus-one complex quantum Chern--Simons theory} 

\address[J.E. Andersen]{Centre for Quantum Geometry (QM), Danish Institute for Advanced Study, SDU, Denmark}
\email{jea@sdu.dk} 
    
\address[A. Malus\`{a}]{Department of Mathematics, University of Toronto, 40 St. George St., Toronto, ON, M5S 2E4, Canada}
\email{amalusa@math.utoronto.ca} 

\address[G. Rembado]{Hausdorff Centre for Mathematics (HCM), Endenicher Allee 62, D-53115, Bonn, Germany}
\email{gabriele.rembado@hcm.uni-bonn.de}


\begin{abstract}
We consider the geometric quantisation of Chern--Simons theory for closed genus-one surfaces and semisimple complex algebraic groups.
First we introduce the natural complexified analogue of the Hitchin connection in K\"{a}hler quantisation, with polarisations coming from the nonabelian Hodge hyper-K\"{a}hler geometry of the moduli spaces of flat connections, thereby complementing the real-polarised approach of Witten. 
Then we consider the connection of Witten, and we identify it with the complexified Hitchin connection using a version of the Bargmann transform on polarised sections over the moduli spaces.
\end{abstract}

{\let\newpage\relax\maketitle} 
    
\setcounter{tocdepth}{1} 
\tableofcontents

\section*{Introduction and main results}

\renewcommand{\thethm}{\arabic{thm}} 
\renewcommand{\therem}{\arabic{rem}} 

One of the mathematical approaches to the quantisation of classical Chern--Simons theory~\cite{freed_1995_classical_chern_simons_theory_i,freed_2002_classical_chern_simons_theory_ii} is the geometric quantisation of the moduli spaces of flat connections on a surface, as Witten also proposed after introducing it as a quantum field theory~\cite{witten_1989_quantum_field_theory_and_the_jones_polynomial}.
This has first been carried out for the structure group $\SU(n)$~\cite{hitchin_1990_flat_connections_and_geometric_quantisation,axelrod_dellapietra_witten_1991_geometric_quantisation_of_chern_simons_gauge_theory, A1,andersen_2012_hitchin_connection_toeplitz_operators_and_symmetry_invariant_deformation_quantisation}, and for $\SL(n,\mathbb{C})$~\cite{witten_1991_quantization_of_chern_simons_gauge_theory_with_complex_gauge_group}. 
In both cases quantisation relies on a Riemann surface structure on the base, resulting in families of quantum Hilbert spaces parametrised by Teichm\"{u}ller space and identified, up to projective factors, via the holonomy of projectively flat connections: the Hitchin connection for $\SU(n)$, and the Hitchin--Witten connection for $\SL(n,\mathbb{C})$, later reformulated in a more general setting~\cite{andersen_2012_hitchin_connection_toeplitz_operators_and_symmetry_invariant_deformation_quantisation,andersen_gammelgaard_2014_the_hitchin_witten_connection_and_complex_quantum_chern_simons_theory, AR}.
The former relies on K\"{a}hler polarisation, while the latter on real ones; a family of K\"{a}hler polarisations also exists for $\SL(n,\mathbb{C})$, and comes from the \emph{hyper-K\"{a}hler} nonabelian Hodge structure of the moduli space. Hence one may try to define a complexified analogue of the Hitchin connection in the complexified setting, which stands as an open problem for arbitrary genus. 

\vspace{5pt}

In this paper we unify these two approaches to the geometric quantisation of complex Chern--Simons theory for genus-one closed surfaces, considering the natural complexified analogue of the Hitchin connection and relating it to the Hitchin--Witten connection via the Bargmann transform.

\vspace{5pt}

Let us briefly recall some aspects of the general high-genus theory. For integers $n,g \geq 2$ denote $K = \SU(n)$, $K^{\mathbb{C}} = \SL(n,\mathbb{C})$, and let $\Sigma$ be a smooth genus-$g$ closed oriented surface. 
The symplectic moduli space $\mathcal{M}_{\fl}$ of isomorphism classes of irreducible flat $K$-connections on $\Sigma$ has natural prequantum data~\cite{freed_1995_classical_chern_simons_theory_i,freed_2002_classical_chern_simons_theory_ii}, and if $\Sigma$ is endowed with a Riemann surface structure then $\mathcal{M}_{\fl}$ inherits a K\"{a}hler structure from the identification with the moduli space of isomorphism classes of topologically trivial stable holomorphic $K^{\mathbb{C}}$-bundles on $\Sigma$~\cite{narasimhan_seshadri_1965_stable_and_unitary_vector_bundles_on_a_compact_riemann_surface,donaldson_1983_a_new_proof_of_a_theorem_of_narasimhan_and_seshadri}. 
One can then apply K\"{a}hler quantisation for every level $k \in \mathbb{Z}_{>0}$, resulting in the space of nonabelian theta functions.

As the Riemann structure on $\Sigma$ is deformed, these spaces fit into a vector bundle on which the Hitchin connection is defined~\cite{hitchin_1990_flat_connections_and_geometric_quantisation,axelrod_dellapietra_witten_1991_geometric_quantisation_of_chern_simons_gauge_theory,andersen_2012_hitchin_connection_toeplitz_operators_and_symmetry_invariant_deformation_quantisation}.

\vspace{5pt}

Starting with the complex group $K^{\mathbb{C}}$ one has instead the holomorphic symplectic de Rham space $\mathcal{M}_{\dR}$, i.e. the moduli space of isomorphism classes of irreducible flat $K^{\mathbb{C}}$-connections on $\Sigma$.
The Chern--Simons action functional now depends on a \emph{complex} quantum level $t = k + is$, and it is used to yield prequantum data for a \emph{real} symplectic structure $\omega_{t}$ on the moduli space. 
A real polarisation can then be introduced using a Riemann structure on the base, and the canonicity of this construction is achieved by the Hitchin--Witten connection~\cite{witten_1991_quantization_of_chern_simons_gauge_theory_with_complex_gauge_group,andersen_gammelgaard_2014_the_hitchin_witten_connection_and_complex_quantum_chern_simons_theory}.

On the other hand K\"{a}hler polarisations can be obtained from nonabelian Hodge theory~\cite{hitchin_1987_the_self_duality_equations_on_a_riemann_surface,donaldson_1987_twisted_harmonic_maps_and_the_self_duality_equations,corlette_1988_flat_g_bundles_with_canonical_metrics,simpson_1992_higgs_bundles_and_local_systems}. 
If $\Sigma$ is a Riemann surface then there is a non-biholomorphic diffeomorphism $\mathcal{M}_{\dR} \simeq \mathcal{M}_{\Dol}$ with the Dolbeault space of isomorphism classes of topologically trivial stable $K^{\mathbb{C}}$-Higgs bundles on $\Sigma$, the nonabelian Hodge correspondence, endowing $\mathcal{M}_{\dR}$ with a second complex structure which can be completed to a hyper-K\"{a}hler triple. 
This is a hyper-K\"{a}hler rotation within the moduli space of isomorphism classes of stable solutions of the self-duality/Hitchin equations on $\Sigma$~\cite{hitchin_1987_the_self_duality_equations_on_a_riemann_surface}, and in this viewpoint $\omega_{t} \in \Omega^{2}(\mathcal{M}_{\dR},\mathbb{R})$ becomes a K\"{a}hler form for a complex structure extracted from the hyper-K\"{a}hler sphere.
Moreover taking monodromy data yields a non-algebraic biholomorphism $\mathcal{M}_{\dR} \simeq \mathcal{M}_{\B}$ with the Betti space, i.e. with the $K^{\mathbb{C}}$-character variety of $\Sigma$; this completes the triple of nonabelian cohomological theories on $\Sigma$~\cite{simpson_1994_moduli_of_representations_of_the_fundamental_group_of_a_smooth_projective_variety_i,simpson_1994_moduli_of_representations_of_the_fundamental_group_of_a_smooth_projective_variety_ii}, and yields the most useful description of the moduli space for our purpose.

\vspace{5pt}

Hereafter we consider the exceptional case $g = 1$, where the above needs to be modified to account for the emptyness of the irreducible locus. Nonetheless we consider natural finite-dimensional descriptions of the moduli spaces, as orbifold quotients of the subspaces of translation-invariant 1-forms with values in the Lie subalgebras of diagonal matrices.

Importantly the same description applies to more general algebraic/Lie groups than $\SU(n) \subseteq \SL(n,\mathbb{C})$, using maximal toral/Cartan subalgebras, with a caveat: for a connected simply-connected semisimple complex algebraic group $K^{\mathbb{C}}$ we will in general obtain a description of the \emph{normalisation} of the De Rham and Betti spaces---as complex algebraic varieties~\cite{thaddeus_2001_mirror_symmetry_langlands_duality_and_commuting_elements_of_lie_groups}---a viewpoint which lends itself to geometric quantisation. 
Note the normalisation map is an isomorphism for all classical groups~\cite{sikora_2014_character_varieties_of_abelian_groups}.

\vspace{5pt}

The first goal is to introduce (projectively) flat connections to relate the quantum Hilbert spaces arising from the aforementioned K\"{a}hler polarisations, passing through the finite-dimensional covering space of $\mathcal{M}_{\dR}$.
In the case of a finite-dimensional affine symplectic space there are two natural constructions~\cite{axelrod_dellapietra_witten_1991_geometric_quantisation_of_chern_simons_gauge_theory,woodhouse_1980_geometric_quantisation}.
The first, a close analogue of
the Hitchin connection, is defined as a covariant differential whose potential is essentially the variation of the Laplace--Beltrami operator; the second uses the orthogonal projection on the $\operatorname{L}^{2}$-closed subspace of holomorphic functions (the Segal--Bargmann space~\cite{bargmann_1961_hilbert_space_of_analytic_functions_and_associated_integral_transform,segal_1963_mathematical_problems_of_relativistic_physics}).

We thus consider these two connections on the
covering space, and restrict them to the families of linear K\"{a}hler polarisations arising from Teichm\"{u}ller space.
We call them the \emph{lifted} complexified Hitchin connection and the $\operatorname{L}^{2}$-connection, respectively. 
Importantly however we derive the former from a connection intrinsically defined for sections over $\mathcal{M}_{\dR}$, generalising~\cite[Chap~4]{rembado_2018_quantisation_of_moduli_spaces_and_connections}, and we show the following by establishing algebraic relations between the differential operators.

\begin{thm}[\S~\ref{sec:complexified_hitchin_connection}]
\label{thm:complexified_hitchin_connection_intro}
    The complexified Hitchin connection preserves holomorphicity.
    Moreover it is flat and mapping class group invariant.
\end{thm}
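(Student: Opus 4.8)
The plan is to descend everything from the finite-dimensional affine covering space, where $\mathcal{M}_{\dR}$ is modelled on a complex symplectic vector space $E \cong \mathfrak{t}_{\mathbb{C}} \oplus \mathfrak{t}_{\mathbb{C}}$ of translation-invariant Cartan-valued connections, carrying the real symplectic form $\omega_{t}$ together with the family of compatible \emph{linear} complex structures $J_{\sigma}$ indexed by the Teichm\"{u}ller parameter $\sigma$ of the torus (so $\sigma$ ranges in $\mathbb{H}$ in genus one). Writing the lifted connection in Axelrod--Della Pietra--Witten form $\nabla_{V} = \partial_{V} - u(V)$, where $u(V)$ is a second-order differential operator whose principal symbol is the contraction of $\dot{J} = \partial_{V} J_{\sigma}$ with the inverse of $\omega_{t}$, all three assertions become statements about this operator and its $\sigma$-variation. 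Since $\nabla$ is defined equivariantly for the deck group (the Weyl group together with the translation lattice), each property verified upstairs for $W$-invariant sections descends to the quotient, so it suffices to argue on $E$.

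For preservation of holomorphicity I would let $\bar{\partial}_{\sigma}$ be the operator cutting out the fibre of holomorphic sections for $J_{\sigma}$, and compute the commutator $[\bar{\partial}_{\sigma}, \nabla_{V}]$ on a holomorphic section. The first-order contribution coming from $\partial_{V}\bar{\partial}_{\sigma}$, i.e.\ from the variation of the $(0,1)$-polarisation, must cancel against the commutator of $\bar{\partial}_{\sigma}$ with the second-order potential $u(V)$; this cancellation is precisely the defining relation tying the symbol of $u(V)$ to $\dot{J}$. Because $J_{\sigma}$ is linear, $\dot{J}$ is constant along the fibre and $u(V)$ has constant coefficients, so the required identity collapses to a purely algebraic relation in $\End(E)$ among $\dot{J}$, $J_{\sigma}$ and $\omega_{t}$, holding by the compatibility and integrability of the K\"{a}hler data.

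Flatness I would extract from expanding $[\nabla_{V}, \nabla_{W}] - \nabla_{[V,W]}$. Its scalar part is the Teichm\"{u}ller-direction curvature of the prequantum connection, which vanishes since the prequantum bundle is canonically trivial over Teichm\"{u}ller space; the remaining differential part is governed by $\partial_{V} u(W) - \partial_{W} u(V) - [u(V), u(W)]$, and its vanishing follows from the symmetry of the mixed second derivatives $\partial_{V}\partial_{W} J_{\sigma}$ together with the explicit holomorphic dependence of $J_{\sigma}$ on $\sigma \in \mathbb{H}$. The affine/linear nature of the model is exactly what removes the usual projective anomaly, so the connection is flat outright rather than merely projectively so.

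Finally, mapping class group invariance is a naturality statement: $\Aut(\Sigma) = \SL(2,\mathbb{Z})$ acts on $\mathbb{H}$ by M\"{o}bius transformations and linearly on $E$, intertwining $\omega_{t}$, the prequantum bundle, and the complex structures $J_{\sigma}$ with $J_{\gamma\cdot\sigma}$; since $\nabla$ is assembled solely from this canonical data, its construction commutes with the group action and equivariance follows formally. The main obstacle I anticipate is the second-order operator algebra behind the first two points---tracking the symbol of $u(V)$, its $\sigma$-variation, and confirming the two cancellations---which is exactly the ``algebraic relations between the differential operators'' flagged in the statement; the mapping class group invariance should then come for free.
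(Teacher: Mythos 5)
Your proposal follows essentially the same route as the paper: the potential is the constant-coefficient Laplacian $-\Delta_{G^{\mathbb{C}}(V)}$ built from the $(1,0)\otimes(1,0)$ part of the contraction of $V[I_{\tau,t}]$ with the inverse of $\omega_{t}$, holomorphicity is preserved because the variation of $\nabla^{0,1}_{t}$ cancels against its commutator with the potential, flatness splits into the mixed-partials term and the operator commutator term, and $\Gamma$-invariance is naturality of the construction from equivariant data. The one imprecision is your justification of $[u(V),u(W)]=0$: this does not come from holomorphic dependence of the complex structures on $\tau$, but from the fact that each $\Delta_{G^{\mathbb{C}}(V)}$ involves only $(1,0)$-covariant derivatives with parallel coefficients and $\bigl[\nabla^{1,0}_{t},\nabla^{1,0}_{t}\bigr]=F^{2,0}_{\nabla_{t}}=0$, the prequantum curvature being of type $(1,1)$.
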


Next we investigate the relations between this K\"{a}hler quantisation scheme and Witten's approach with real polarisations, i.e. with the genus-one Hitchin--Witten connection.
Over the covering space, the quantum Hilbert spaces of geometric quantisation are isometric via the Bargmann transform~\cite{guillemin_sternberg_1977_geometric_asymptotics,woodhouse_1980_geometric_quantisation}.
In our case, having a family of polarisations of each kind, we introduce a family of Bargmann transforms.
We lift the Hitchin-Witten connection to act on families of sections of the line bundle over the covering space and establish the following---passing through the $\operatorname{L}^{2}$-connection.

\begin{thm}[\S~\ref{sec:conjugation_hitchin_witten}]
\label{thm:l^2=HW}
    The Bargmann transform intertwines the lifted Hitchin--Witten and complexified Hitchin connections when acting on (sufficiently regular) families of polarised sections, in a mapping class group equivariant fashion.
\end{thm}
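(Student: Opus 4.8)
The plan is to factor the claimed intertwining through the $\operatorname{L}^{2}$-connection on the K\"{a}hler side, exploiting that all three quantisation schemes live on the same finite-dimensional linear covering space of $\mathcal{M}_{\dR}$ and therefore admit explicit Gaussian descriptions. Writing $\nabla^{\HW}$, $\nabla^{H}$ and $\nabla^{\operatorname{L}^{2}}$ for the lifted Hitchin--Witten, the lifted complexified Hitchin and the $\operatorname{L}^{2}$-connections respectively, and $B = (B_{\tau})_{\tau}$ for the family of Bargmann transforms parametrised by the Teichm\"{u}ller parameter $\tau$, I would split the statement into the two assertions
\begin{equation}
    \nabla^{H} = \nabla^{\operatorname{L}^{2}} \text{ on holomorphic sections,} \qquad B \circ \nabla^{\HW} = \nabla^{\operatorname{L}^{2}} \circ B,
\end{equation}
whose composition is exactly the desired relation $B \circ \nabla^{\HW} = \nabla^{H} \circ B$.

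First I would establish the left-hand identity, which is essentially algebraic. Both connections are the trivial $\tau$-derivative corrected by a second-order operator: for $\nabla^{H}$ the potential is the variation of the Laplace--Beltrami operator of the moving K\"{a}hler structure, while $\nabla^{\operatorname{L}^{2}}$ corrects by the orthogonal projection onto the fibrewise Segal--Bargmann space. In the affine setting the two prescriptions are known to differ only by an operator annihilating holomorphic sections, being built from $\overline{\partial}$ (compare~\cite{axelrod_dellapietra_witten_1991_geometric_quantisation_of_chern_simons_gauge_theory,woodhouse_1980_geometric_quantisation}); since Theorem~\ref{thm:complexified_hitchin_connection_intro} guarantees that $\nabla^{H}$ preserves holomorphicity, and $\nabla^{\operatorname{L}^{2}}$ does so by construction, they agree on the Segal--Bargmann spaces. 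This reduces to the commutator relations between the differential operators already assembled for Theorem~\ref{thm:complexified_hitchin_connection_intro}.

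The substance of the proof lies in the right-hand identity. Here I would realise $B_{\tau}$ as an integral operator with Gaussian kernel $\beta_{\tau}(z,q)$ interpolating between the real-polarised $\operatorname{L}^{2}$-sections and the holomorphic ones, and differentiate along $\tau$. Then $\tfrac{d}{d\tau}(B_{\tau}s) = \dot{B}_{\tau}s + B_{\tau}\dot{s}$, where $\dot{B}_{\tau}$ is the integral operator with kernel $(\partial_{\tau}\log\beta_{\tau})\,\beta_{\tau}$; since $\log\beta_{\tau}$ is quadratic in its arguments, $\partial_{\tau}\log\beta_{\tau}$ is a quadratic polynomial, and Gaussian integration by parts turns $\Pi_{\tau}\dot{B}_{\tau}$ into a second-order differential operator applied to $B_{\tau}s$, which I would then match term by term with the second-order potential of $\nabla^{\HW}$ transported by $B_{\tau}$. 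The Weyl-group (orbifold) symmetry is preserved throughout because the kernel is manifestly equivariant, so the computation descends from the linear cover. The main obstacle is precisely this kernel computation: controlling differentiation under the integral sign on the stated class of \emph{sufficiently regular} families, and reconciling the first-order contribution of $\partial_{\tau}\beta_{\tau}$ with the second-order Hitchin--Witten potential by a careful completion of the square, using that the anti-holomorphic terms vanish after applying $\Pi_{\tau}$.

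Finally, mapping class group equivariance follows by naturality. The group acts on $\tau$ through the modular action and on the quantum bundles by the induced symplectomorphisms; since $\beta_{\tau}$ is constructed solely from $\omega_{t}$ and the moving complex structure, both of which transform equivariantly, the family $B$ is equivariant as well. Combined with the mapping class group invariance of $\nabla^{H}$ from Theorem~\ref{thm:complexified_hitchin_connection_intro} and the corresponding invariance of $\nabla^{\HW}$, the intertwining relation is equivariant, completing the argument.
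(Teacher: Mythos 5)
Your top-level decomposition is exactly the paper's: both factor the intertwining through the $\operatorname{L}^{2}$-connection, proving separately that $\nabla^{\mathbb{C}}=\nabla^{\operatorname{L}^{2}}$ on holomorphic sections and that $\mathcal{B}\circ\nabla^{\HW}=\nabla^{\operatorname{L}^{2}}\circ\mathcal{B}$, with equivariance obtained by naturality at the end. Where you diverge is in the execution of the second (and main) step. The paper does not differentiate the Gaussian kernel: it writes the Hitchin--Witten potential explicitly in the $\tau$-adapted coordinates as a quadratic expression in the multiplication and derivative operators $M_{j},D_{j}$, takes the polarised extension and restricts to $Q_{\tau}$, and then transports each term through $\mathcal{B}_{\tau}$ using the ladder-operator identities $\mathcal{B}\circ M_{j}=\tfrac{i}{2}(\delta_{j}-\mu_{j})\circ\mathcal{B}$, $\mathcal{B}\circ D_{j}=\tfrac{i}{2}(\mu_{j}+\delta_{j})\circ\mathcal{B}$, matching the result against an explicit local formula for $\nabla^{\operatorname{L}^{2}}$. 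The analytic burden (your ``differentiation under the integral sign'') is isolated into the hypothesis that $\mathcal{B}_{\tau}$ commutes with the vector fields $\delta/\delta\tau$, $\delta/\delta\overline{\tau}$. Your kernel-differentiation route should work and is conceptually equivalent, but note that it must still confront the point the paper spends most of its effort on: the term $\partial\psi/\partial\tau$ in the covariant derivative of a family of $P_{\tau}$-polarised sections is \emph{not} itself polarised, and has to be traded for $\delta\psi/\delta\tau$ plus first-order corrections (which combine with the variation of the frame $\rho_{\tau}$) before anything can be Bargmann-transformed; ``completing the square'' in $\partial_{\tau}\log\beta_{\tau}$ is precisely this bookkeeping, and it is where the $M_{j}D_{j}$ cross-terms cancel.

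One justification in your first step is misstated. Two connections both preserving the holomorphic subspace need not agree there, and the potential $\Delta_{G^{\mathbb{C}}(V)}$ is built from $\nabla^{1,0}_{t}$, not $\overline{\partial}$: it does \emph{not} annihilate holomorphic sections. The correct argument, which is the one in the paper, is that the \emph{image} of $\Delta_{G^{\mathbb{C}}(V)}$ on holomorphic sections is $\operatorname{L}^{2}$-orthogonal to the holomorphic subspace (because the adjoint of $\nabla_{X}$ for constant $X$ of type $(1,0)$ is $-\nabla_{\overline{X}}$, which kills holomorphic sections); combined with the fact that $\nabla^{\mathbb{C}}_{V}\varphi$ is again holomorphic, one gets $\nabla^{\mathbb{C}}_{V}\varphi=\pi^{\widetilde{\mathcal{H}}}(\nabla^{\mathbb{C}}_{V}\varphi)=\pi^{\widetilde{\mathcal{H}}}(V[\varphi])=\nabla^{\operatorname{L}^{2}}_{V}\varphi$. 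You should replace ``annihilating'' by ``orthogonal in image to'' and add the projection argument; as written the step does not follow.
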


Next, we focus on relating the Hilbert spaces obtained from geometric quantisation on the moduli spaces themselves.
Adapting the ideas in~\cite{andersen_1992_jones_witten_theory}, which relates the quantization of compact tori with respect to arbitrary linear polarizations, we notice that the Bargmann transform extends to lifts of smooth sections on $\mathcal{M}_{\fl}$ and obtain the following result.

\begin{thm}[\S~\ref{sec:Barg_below}]
	\label{thm:Barg_below}
	For every value of the Teichmüller parameter, the Bargmann transform defines a unitary isomorphism between the corresponding Hilbert spaces arising from geometric quantisation on the moduli spaces.
\end{thm}

Finally, we relate the quantum connections as intrinsically defined on the moduli spaces.
We consider the Bargmann-stable subspaces of Schwartz-class sections of the prequantum line bundles (complex-analytic in the complexified case), and the induced transpose Bargmann transform between their topological duals, and dual versions of the Hitchin--Witten and complexified Hitchin connections.
We then embed $\operatorname{L}^{2}$ polarised sections over the moduli spaces as tempered distributions and show compatibility of all these objects, finally obtaining the following.

\begin{thm}[\S~\ref{sec:duals}]
	\label{thm:CH=HW}
	The Bargmann transform on the moduli spaces intertwines the Hitchin-Witten and complexified Hitchin connections.
\end{thm}

\begin{rem}
	It should be remarked that the potentials of the Hitchin-Witten and complexified Hitchin connections are skew-adjoint, so the Bargmann transform can be viewed as an equivalence of \emph{unitary} connections.
	This is yet again an exceptional feature of the genus $1$, as these connections are typically not compatible with the na\"ive Hermitian structure in general.
\end{rem}

Note the computation of the genus one complex quantum Chern--Simons mapping class group representation of~\cite{AM}, which was based on Witten's explicit description of the covariant constant sections of the Hitchin--Witten connection~\cite{witten_1991_quantization_of_chern_simons_gauge_theory_with_complex_gauge_group}, now also applies to the complexified Hitchin connection by Thm.~\ref{thm:CH=HW} above.

\vspace{5pt}

Finally we would like to add that there are related, but different works in a similar context in \cite{BMN,FMN1,FMN2,FMMN, KMN, Welters}.

\section*{Acknowledgements}

All authors thank the anonymous referee for their observations and suggestions, which led to an overall better version of the present work.

They would also like to thank the former Centre for Quantum Geometry of Moduli Spaces (QGM) at the Aarhus University, and the new Centre for Quantum Mathematics (QM) at the University of Southern Denmark for hospitality and support.

The first-named author was supported in part by the Danish National Science Foundation Center of Excellence grant, Centre for Quantum Geometry of Moduli spaces, DNRF95 and by  the ERC-SyG project, Recursive and Exact New Quantum Theory (ReNewQuantum) which receives funding from the European Research Council (ERC) under the European Union's Horizon 2020 research and innovation programme under grant agreement No 810573.

The second-named author thanks for their support the University of Toronto, the University of Saskatchewan, the Pacific Institute for the Mathematical Sciences (PIMS), the Centre for Quantum Topology and its Applications (quanTA), and NSERC.
He also wishes to thank Steven Rayan for many discussions.

The third-named author was supported by the grant number 178794 of the Swiss National Science Foundation (SNSF), and by the National Centre of Compentence in Research SwissMAP, of the SNSF. 

\renewcommand{\thethm}{\arabic{section}.\arabic{thm}} 
\renewcommand{\therem}{\arabic{section}.\arabic{rem}} 

\section{Moduli spaces}
\label{sec:moduli_spaces}

Let $\Sigma$ be a smooth oriented closed surface of genus $1$, $K$ a compact connected simply-connected Lie group, $K \hookrightarrow K^{\mathbb{C}}$ a complexification, and $\mathfrak{k}^{(\mathbb{C})} \coloneqq \Lie \bigl(K^{(\mathbb{C})}\bigr)$ their Lie algebras,%
\footnote{Hereafter a superscript "$(\mathbb{C})$" denotes presence/absence of a superscript "$\mathbb{C}$".}
and set $r = \rk(K)$.

\subsection{Flat connections}

Let $\mathcal{A}^{(\mathbb{C})}$ be the space of connections on the trivial principal $K^{(\mathbb{C})}$-bundle $P^{(\mathbb{C})} = \Sigma \times K^{(\mathbb{C})} \to \Sigma$, with the Atiyah--Bott symplectic form~\cite{atiyah_bott_1983_yang_mills_equations_over_riemann_surfaces}
\begin{equation}
\label{eq:atiyah_bott_form}
    \widetilde{\omega}^{(\mathbb{C})}(A,B) \coloneqq \int_{\Sigma} \langle A \wedge B \rangle_{\mathfrak{k}^{(\mathbb{C})}} \, , \qquad \text{for } A,B \in \Omega^{1}\bigl(\Sigma,\mathfrak{k}^{(\mathbb{C})}\bigr) \, . 
\end{equation} 
Here $\langle \cdot \wedge \cdot \rangle_{\mathfrak{k}^{(\mathbb{C})}}$ is the contraction with a suitable multiple of the Cartan--Killing form of $\mathfrak{k}^{(\mathbb{C})}$.
The pairing on $\mathfrak{k}$ is such that the cohomology class of the canonical $3$-form lies in $H^{3}_{\dR}(K,2\pi \mathbb{Z})$, and that on $\mathfrak{k}^{\mathbb{C}}$ is obtained by complexification.

The moduli space $\mathcal{M}^{(\mathbb{C})}_{\fl}$ of isomorphism classes of flat $K^{(\mathbb{C})}$-connections is then the level-zero symplectic reduction of $\mathcal{A}^{(\mathbb{C})}$ with respect to the Hamiltonian action of the gauge group $\mathcal{K}^{(\mathbb{C})}$, identifying the moment map with the curvature.

We denote by $\omega^{(\mathbb{C})}$ the form on the reduction, a stratified symplectic space with singular points corresponding to degenerate gauge orbits. Below we will describe it explicitly; see~\cite{franco_garciaprada_newstead_2014_higgs_bundles_over_elliptic_curves,franco_garciaprada_newstead_2019_higgs_bundles_over_elliptic_curves_for_complex_reductive_groups} for the viewpoint of Higgs bundles.

\begin{rem}[De Rham structure]
\label{rem:reduced_tangent_spaces}

    The space $\mathcal{A}^{\mathbb{C}}$ comes with a linear complex structure $\widetilde{J}$ for which~\eqref{eq:atiyah_bott_form} is of type $(2,0)$: 
    \begin{equation}
    \label{eq:linear_complex_structure}
        \widetilde{J}(\alpha \otimes X) \coloneqq \alpha \otimes (i X) \, , \qquad \text{for } \alpha \in \Omega^1(\Sigma,\mathbb{R}), \, X \in \mathfrak{k}^{\mathbb{C}} \, .
    \end{equation} 

    A model for the tangent space at a smooth point is
    \begin{equation}
    \label{eq:reduced_tangent_spaces}
        \T_{[A]} \mathcal{M}_{\fl}^{(\mathbb{C})} = H^1_{\dR,A}\bigl(\Sigma,\mathfrak{k}^{(\mathbb{C})}\bigr) \coloneqq H^1\bigl(\Omega^{\bullet}\bigl(\Sigma,\Ad P^{(\mathbb{C})}\bigr),d_A\bigr) \, ,
    \end{equation} 
    where $d_A$ is the connection on the adjoint vector bundle induced by $A \in \mathcal{A}^{\mathbb{C}}$.
    
    Since $\widetilde{J}$ commutes with $d_{A}$ it induces a complex structure $J$ on~\eqref{eq:reduced_tangent_spaces}, the \emph{de Rham} structure. 
    We write $\mathcal{M}_{\dR} \coloneqq \bigl(\mathcal{M}_{\fl}^{\mathbb{C}},J\bigr)$ for the resulting de Rham space.
    The notation $\mathcal{M}_{\fl}^{\mathbb{C}}$ refers to the underlying space (possibly equipped with different complex structures, cf. Rem.~\ref{rem:dolbeault}). 
\end{rem}

\subsection{Betti viewpoint}

Flat connections on $P^{(\mathbb{C})} \to \Sigma$ may be described as topologically trivial local $K^{(\mathbb{C})}$-systems on $\Sigma$. 
These are classified by monodromy, so there are identifications with character varieties: 
\begin{equation}
\label{eq:character_varieties}
    \mathcal{M}_{\fl} \simeq \Hom\bigl(\pi_1(\Sigma),K\bigr) \big\slash K \, , \quad \mathcal{M}_{\dR} \simeq \mathcal{M}_{\B} \coloneqq \Hom \bigl(\pi_1(\Sigma),K^{\mathbb{C}}\bigr) \sslash K^{\mathbb{C}} \, ,
\end{equation} 
taking a complex GIT quotient in the latter case (as not all $K^{\mathbb{C}}$-orbits are closed). 

To give an explicit description fix a maximal torus $T \subseteq K$, set $\mathfrak{t} \coloneqq \Lie(T) \subseteq \mathfrak{k}$, and   let $T^{\mathbb{C}} \subseteq K^{\mathbb{C}}$ be the connected subgroup with Lie algebra $\mathfrak{t}^{\mathbb{C}} \coloneqq \mathfrak{t} \otimes \mathbb{C} \subseteq \mathfrak{k}^{\mathbb{C}}$.
Denote $N_K(T) \subseteq K$ the normaliser of $T$ in $K$ and $W \coloneqq N_K(T) \big\slash T$ the Weyl group.    

Now two commuting elements of $K$ sit in a common maximal torus, and after a conjugation are taken inside $T$; the residual action is that of the Weyl group~\cite{borel_1961_sous_groupes_commutatifs_et_torsions_des_groups_de_lie_compacts_connexes}, hence we get a homeomorphism $\mathcal{M}_{\fl} \simeq T \times T \big\slash W$.
Moreover the reduction of~\eqref{eq:atiyah_bott_form} becomes the reduction of the natural translation- and Weyl-invariant symplectic form on the Lie group $T^2$, so the notation will not distinguish the two.

In the case of the complex group $K^{\mathbb{C}}$ we explicitly restrict to representations with values in the prescribed maximal algebraic torus $T^{\mathbb{C}}$, which in particular have closed $K^{\mathbb{C}}$-orbits. 
This yields \emph{completely reducible} representations, whereas there are no \emph{irreducible} ones~\cite{sikora_2014_character_varieties_of_abelian_groups} (see e.g.~\cite{sikora_2012_character_varieties} for definitions). 

\begin{prop}[Normalisation of the Betti space]
    The composition
    \begin{equation}
        T^{\mathbb{C}} \times T^{\mathbb{C}} \simeq \Hom \bigl( \mathbb{Z}^{2},T^{\mathbb{C}} \bigr) \longrightarrow \Hom \bigl( \mathbb{Z}^{2},K^{\mathbb{C}} \bigr) \longrightarrow \mathcal{M}_{\B} \, 
    \end{equation}
    factors through the (set-theoretic) quotient $\widetilde{\mathcal{M}}_{\B} \coloneqq \bigl( T^{\mathbb{C}} \times T^{\mathbb{C}} \bigr) \big\slash W$ for the diagonal action of the Weyl group. 
    The resulting arrow $\chi \colon \widetilde{\mathcal{M}}_{\B} \to \mathcal{M}_{\B}$ is a normalisation map.
\end{prop}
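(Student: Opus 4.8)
The plan is to prove the statement in two stages: first the set-theoretic factorisation, then the identification of the induced arrow $\chi$ as a normalisation map, for which I would check that $\chi$ is a finite, birational morphism of integral affine varieties with normal source.

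\emph{Factorisation.} Given $w \in W$, pick a representative $n \in N_K(T) \subseteq K \subseteq K^{\mathbb{C}}$. The diagonal action of $w$ sends $(t_1,t_2)$ to $(n t_1 n^{-1}, n t_2 n^{-1})$, i.e. to the simultaneous $K^{\mathbb{C}}$-conjugate by $n$. Since the final arrow to $\mathcal{M}_{\B}$ is the GIT quotient by global $K^{\mathbb{C}}$-conjugation, the composite is constant on $W$-orbits and thus factors through $\widetilde{\mathcal{M}}_{\B} = \bigl( T^{\mathbb{C}} \times T^{\mathbb{C}} \bigr)/W$. As $W$ is finite, this set-theoretic quotient is the affine categorical quotient, with coordinate ring $\mathbb{C}\bigl[ T^{\mathbb{C}} \times T^{\mathbb{C}} \bigr]^{W}$; and since $T^{\mathbb{C}} \times T^{\mathbb{C}} \cong (\mathbb{C}^{*})^{2r}$ is smooth, hence normal, and $W$ is finite, $\widetilde{\mathcal{M}}_{\B}$ is a normal irreducible affine variety.

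\emph{Dominance and irreducibility of the target.} Recall that a finite birational morphism from a normal variety onto a reduced variety is the normalisation, by its universal property, so it suffices to verify dominance, generic injectivity and finiteness. Every closed point of $\mathcal{M}_{\B}$ corresponds to a closed $K^{\mathbb{C}}$-orbit, i.e. to a completely reducible representation of $\mathbb{Z}^2$, i.e. to a pair of commuting \emph{semisimple} elements of $K^{\mathbb{C}}$. Because $K^{\mathbb{C}}$ is connected and simply-connected the centraliser of a semisimple element is connected reductive, so two commuting semisimple elements lie in a common maximal torus; conjugating that torus to $T^{\mathbb{C}}$ shows the pair is $K^{\mathbb{C}}$-conjugate into $T^{\mathbb{C}} \times T^{\mathbb{C}}$. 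Thus $\chi$ is surjective on closed points; as its source is irreducible, $\mathcal{M}_{\B}$ (with its reduced structure) is irreducible of the same dimension $2r$, and $\chi$ is dominant.

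\emph{Generic injectivity.} Let $U \subseteq T^{\mathbb{C}} \times T^{\mathbb{C}}$ be the locus of \emph{regular} pairs, those whose common centraliser $Z_{K^{\mathbb{C}}}(t_1) \cap Z_{K^{\mathbb{C}}}(t_2)$ has identity component exactly $T^{\mathbb{C}}$; its complement is $\bigcup_{\alpha} \{ \alpha(t_1) = 1 = \alpha(t_2) \}$ over the roots $\alpha$, a proper closed subset, so $U$ is dense. If $g \in K^{\mathbb{C}}$ conjugates one regular pair to another, then $g$ carries $T^{\mathbb{C}}$, the connected centraliser of the first, to that of the second, whence $g \in N_{K^{\mathbb{C}}}(T^{\mathbb{C}})$ and the conjugation is realised by an element of $W \cong N_{K^{\mathbb{C}}}(T^{\mathbb{C}})/T^{\mathbb{C}}$. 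Hence $\chi$ is injective over the image of $U$, and over $\mathbb{C}$ a dominant generically injective morphism of integral varieties is birational.

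\emph{Finiteness.} This is the main obstacle. I would prove that $\mathbb{C}[\mathcal{M}_{\B}] \hookrightarrow \mathbb{C}\bigl[ T^{\mathbb{C}} \times T^{\mathbb{C}} \bigr]^{W}$ is module-finite by establishing integrality. Fix a faithful representation $V$ of $K^{\mathbb{C}}$; for $(m,n) \in \mathbb{Z}^2$ the conjugation-invariant function $\rho \mapsto \Tr_{V}\bigl( \rho(m e_1 + n e_2) \bigr)$ lies in $\mathbb{C}[\mathcal{M}_{\B}]$ and restricts on the torus to the joint power sum $\sum_{\mu} (\dim V_{\mu})\, \mu(t_1)^{m} \mu(t_2)^{n}$ over the weights $\mu$ of $V$. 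By Newton's identities these power sums make the multiset of joint eigenvalue pairs integral over the subring they generate; and since the weights of a faithful $V$ span the character lattice rationally, the individual characters of $T^{\mathbb{C}} \times T^{\mathbb{C}}$ are in turn integral over $\mathbb{C}[\mathcal{M}_{\B}]$. Being a finitely generated $\mathbb{C}$-algebra that is integral over $\mathbb{C}[\mathcal{M}_{\B}]$, the invariant ring is then finite over it. Alternatively one may invoke the single-element Chevalley–Steinberg finiteness together with the two-element refinement recorded in \cite{thaddeus_2001_mirror_symmetry_langlands_duality_and_commuting_elements_of_lie_groups,sikora_2014_character_varieties_of_abelian_groups}. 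Combining normality of the source with finiteness and birationality identifies $\chi$ with the normalisation map.
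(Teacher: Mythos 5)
Your proposal is correct, but it takes a genuinely different route from the paper. The paper's proof is a two-line citation argument: it quotes \cite{thaddeus_2001_mirror_symmetry_langlands_duality_and_commuting_elements_of_lie_groups,sikora_2014_character_varieties_of_abelian_groups} for the statement that $\chi$ normalises the irreducible component of the trivial representation, and then \cite{richardson_1988_conjugacy_classes_of_n_tuples_in_lie_algebras_and_algebraic_groups} for the irreducibility of $\mathcal{M}_{\B}$ when $K^{\mathbb{C}}$ is semisimple and simply-connected, so that this component is everything. You instead verify the universal property of normalisation directly: normality of the source as a finite quotient of a smooth variety, surjectivity on closed points via Steinberg's connectedness theorem for centralisers of semisimple elements (which is exactly the ingredient hiding inside Richardson's irreducibility result, so you get irreducibility of the target for free), generic injectivity on the regular locus, and finiteness via multisymmetric power sums of trace functions. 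Your version is self-contained and makes visible where simple-connectedness is actually used (both for surjectivity and, implicitly, for the trace-function finiteness); the paper's version is shorter and delegates the genuinely technical point (finiteness/integrality of the character ring for pairs) to the literature, which you also offer as a fallback.

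Two small points of precision in your finiteness step. First, "the weights of a faithful $V$ span the character lattice rationally" is weaker than what you need: integrality of the individual characters $\lambda(t_i)$ requires that the weights generate $X^{*}(T^{\mathbb{C}})$ over $\mathbb{Z}$ (true for a faithful representation of a connected group, since otherwise $\bigcap_{\mu}\Ker\mu$ would be a nontrivial subgroup acting trivially), together with the observation that $\mu(t_i)^{-1}$ is also integral (e.g. because $\det V$ is trivial for semisimple $K^{\mathbb{C}}$, so $-\mu$ is a $\mathbb{Z}_{\geq 0}$-combination of weights of $V$). Second, in the generic injectivity step you should note explicitly that a pair lying in $T^{\mathbb{C}}\times T^{\mathbb{C}}$ is completely reducible and hence has closed $K^{\mathbb{C}}$-orbit, so that two such pairs identified in the GIT quotient are actually conjugate rather than merely having intersecting orbit closures; you state this earlier and it is what makes the fibre over a regular point a single $W$-orbit.
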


\begin{proof}
    By~\cite{thaddeus_2001_mirror_symmetry_langlands_duality_and_commuting_elements_of_lie_groups,sikora_2014_character_varieties_of_abelian_groups} $\chi$ is a normalisation of the irreducible component of the trivial representation; but since $K^{\mathbb{C}}$ is semisimple and simply-connected $\mathcal{M}_{\B}$ itself is irreducible~\cite{richardson_1988_conjugacy_classes_of_n_tuples_in_lie_algebras_and_algebraic_groups}.
\end{proof}

Further $\widetilde{\mathcal{M}}_{\B}$ carries the reduction of the translation- and Weyl-invariant complex symplectic form on the Lie group $T^{\mathbb{C}} \times T^{\mathbb{C}}$.
This matches up with the genus-one Goldman structure, defined on an open dense subset of $\mathcal{M}_{\B}$, as the normalisation map is symplectic~\cite{sikora_2014_character_varieties_of_abelian_groups}. 
Similarly the de Rham complex structure $J$ on $\mathcal{M}_{\dR} \simeq \mathcal{M}_{\B}$ matches up with the reduction of the invariant complex structure. 

Hereafter we work on the normal singular variety $\widetilde{\mathcal{M}}_{\B} \hookrightarrow \mathcal{M}_{\B}$. 
For the sake of simplicity the notation \emph{will not} distinguish between the moduli spaces and their normalisation, and neither between their complex/symplectic structures, so
\begin{equation}
\label{eq:complex_moduli_space_tori_quotient}
    \mathcal{M}_{\dR} \simeq T^{\mathbb{C}} \times T^{\mathbb{C}} \big\slash W \, .
\end{equation} 

\begin{rem*}
    There are natural embedding/projection arrows $\iota \colon \mathcal{M}_{\fl} \rightleftarrows \mathcal{M}_{\fl}^{\mathbb{C}} \colon \pi$, since the diagonal Weyl group action on $T^{(\mathbb{C})} \times T^{(\mathbb{C})}$ commutes with the factorwise projection $T^{\mathbb{C}} \twoheadrightarrow T$ and inclusion $T \hookrightarrow T^{\mathbb{C}}$.
\end{rem*}

\subsection{Finite-dimensional de Rham viewpoint}
\label{sec:finite_dimensional_presentation}

Letting $\mathcal{A}_{0}^{(\mathbb{C})} \coloneqq H^{1} (\Sigma, \mathfrak{t}^{(\mathbb{C})})$ be the de Rham cohomology with coefficients in $\mathfrak{t}^{(\mathbb{C})}$ as a vector space, there is a natural map $\mathcal{A}_{0}^{(\mathbb{C})} \to \mathcal{M}_{\fl}^{(\mathbb{C})}$, which is (holomorphic) symplectic for the cup product on $\mathcal{A}_{0}^{(\mathbb{C})}$.
The monodromy factors through $\mathcal{A}_{0}^{(\mathbb{C})} \to H^{1} (\Sigma, T^{(\mathbb{C})}) \simeq \Hom(\pi_{1}(\Sigma), T^{(\mathbb{C})})$, and the kernel of both maps is $\mathcal{T}_{0} \coloneqq H^{1} (\Sigma, \Lambda)$, where $\Lambda \coloneqq \ker \left( \exp \colon \mathfrak{t} \to T \right)$.
This gives a description of (the normalisation of) $\mathcal{M}_{\fl}^{(\mathbb{C})}$ as the finite-dimensional quotient of $\mathcal{A}_{0}^{(\mathbb{C})}$ by the discrete group $\mathcal{K}_{0} = \mathcal{T}_{0} \rtimes W$.
In particular, smooth objects on each quotient correspond to $\mathcal{K}_{0}$-equivariant ones on the associated vector space.

\section{Symplectic structure and polarisations}
\label{sec:polarisations}

Being a \emph{holomorphic} symplectic space, $(\mathcal{M}^{\mathbb{C}}_{\fl} , \omega^{\mathbb{C}})$ has no preferred \emph{real} symplectic structure.
However a natural one is obtained after introducing a coupling constant in the Chern-Simons action functional, as discussed by Witten~\cite{witten_1991_quantization_of_chern_simons_gauge_theory_with_complex_gauge_group}.
This complex parameter $t = k + is$ is the level of the theory, with $k > 0$ an integer, and the corresponding real symplectic form is $\omega_{t} \coloneqq \mathbb{R}e (t \omega^{\mathbb{C}})$.

\subsection{K\"{a}hler polarisations}

Let $\mathcal{T} = \mathcal{T}_{\Sigma}$ be the Teichm\"{u}ller space of $\Sigma$, identified with the upper half-plane $\mathbb{H} \subset \mathbb{C}$ as usual.
Each class is represented by a complex structure on $\Sigma$ making it isomorphic to $\mathbb{C} \slash (\mathbb{Z} \oplus \tau \mathbb{Z})$ for some $\tau \in \mathbb{H}$.
Given $\tau$, we denote $X_{\tau}$ the resulting Riemann surface.

\subsubsection{Hyperk\"{a}hler structures}
\label{sec:kaehler_polarisations_i}

The Hodge-$\ast$ operator of $X_{\tau}$ yields a complex structure $\widetilde{I}_{\tau}$ on $\mathcal{A}$, with K\"{a}hler metric $\widetilde{g}_{\tau} = \widetilde{\omega} \cdot \widetilde{I}_{\tau}$ corresponding to the $\operatorname{L}^{2}$-pairing.
Complex structures are then naturally induced on tangent spaces at smooth points of $\mathcal{M}_{\fl}$ by acting on harmonic representatives~\cite{hitchin_1990_flat_connections_and_geometric_quantisation,andersen_gammelgaard_2014_the_hitchin_witten_connection_and_complex_quantum_chern_simons_theory} (cf. Rem.~\ref{rem:reduced_tangent_spaces}).
We will denote $I_{\tau}$ and $g_{\tau}$ the reduced structures on $\mathcal{M}_{\fl}$ and their lifts to $\mathcal{A}_{0}$.

Deforming the conformal structure of $\Sigma$ yields a fibre bundle $\bm{\mathcal{M}}_{\fl} \to \mathcal{T}$ of complex manifolds with fibres $(\mathcal{M}_{\fl},I_{\tau})$. 
It is a fibrewise quotient of the complex vector bundle $\bm{\mathcal{A}}_{0} \to \mathcal{T}$, whose fibres are the complex vector spaces $\bigl(\mathcal{A}_{0},I_{\tau}\bigr)$.

\vspace{5pt}

The construction of the complex structure $I_{\tau}^{\mathbb{C}}$ on $\mathcal{M}_{\fl}^{\mathbb{C}}$ is more subtle, as the Hodge-$\ast$ operator and $\operatorname{L}^{2}$-norm on $\mathcal{A}^{\mathbb{C}}$ are not preserved by the $\mathcal{K}^{\mathbb{C}}$-action.
Instead, it is obtained from the identification with $\mathcal{M}_{\Dol}$ via non-abelian Hodge theory.
For the purpose of this work, it will be enough to mention that $I^{\mathbb{C}}_{\tau}$ is realised on each tangent space $T_{[A]} \mathcal{M}_{\fl} \simeq H^{1}_{\dR,A}(X_{\tau},\mathfrak{k})$ as the action of the Hodge-$\ast$ operator associated to a specific metric adapted to $A$, its harmonic metric, on harmonic representatives.
Note furthermore that, if $A \in \mathcal{A}$ is a flat $K$-connection, then the standard metric is harmonic for $A$, so $I^{\mathbb{C}}_{\tau}$ restricts to $I_{\tau}$ on $T_{[A]} \mathcal{M}_{\fl} \subset T_{[A]} \mathcal{M}_{\fl}^{\mathbb{C}}$.

The complex structure $I^{\mathbb{C}}_{\tau}$ defines a hyperkähler structure together with $J$ and $K_{\tau} \coloneqq I^{\mathbb{C}}_{\tau} \circ J$, and the symplectic forms corresponding to $I^{\mathbb{C}}_{\tau}$ and $K_{\tau}$ are $\mathbb{R}e(\omega^{\mathbb{C}})$ and $- \mathbb{I}m(\omega^{\mathbb{C}})$, respectively.
It follows that $\omega_{t} / \abs{t}$ belongs to the family of symplectic forms defined by the hyperkähler structure, corresponding to
\begin{equation}
	I_{t,\tau} \coloneqq k' I^{\mathbb{C}}_{\tau} + s' K_{\tau}
\end{equation}
with $k' = k/\abs{t}$ and $s' = s/\abs{t}$.
This results in a Kähler manifold $\bigl(\mathcal{M}_{\fl}^{\mathbb{C}},I_{\tau,t},\omega_{t}, g_{\tau}^{\mathbb{C}}\bigr)$.

As in the case of $\mathcal{M}_{\fl}$, as $\tau$ varies one obtains a fibration $\bm{\mathcal{M}}_{\fl}^{\mathbb{C}} \to \mathcal{T}$, which is also a quotient of the (quaternionic) vector bundle $\bm{\mathcal{A}}_{0}^{\mathbb{C}} \to \mathcal{T}$.

\begin{rem}
	\label{rem:LeviCivita}
	As we shall see in \S~\ref{sec:coordinates_and_frames}, in suitable coordinates the metric $g_{\tau}^{(\mathbb{C})}$ is represented by a \emph{constant} tensor, trivialising the Levi-civita connection.
	While the tensor itself depends on $\tau$, the coordinates do not, so the connection is independent of $\tau$.
\end{rem}

\begin{rem}[Dolbeault]
\label{rem:dolbeault}

If $s = 0$ then $I_{\tau,t} = I_{\tau}^{\mathbb{C}}$: this is the \emph{Dolbeault} structure on the moduli space $\mathcal{M}_{\Dol,\tau} = \mathcal{M}_{\Dol}\bigl(X_{\tau},K^{\mathbb{C}}\bigr)$ of isomorphism classes of (polystable) topologically trivial $K^{\mathbb{C}}$-Higgs bundles on $X_{\tau}$.
This case also corresponds to the setup of~\cite{andersen_gukov_pei_2016_the_verlinde_formula_for_higgs_bundles}, which however considers the moduli stack in all genera.
\end{rem}

\begin{rem*}
One may also wish to consider \emph{all} the Kähler structures in the family and study the dependence of the quantisation process below on this choice, e.g. for a fixed $\tau \in \mathcal{T}$.
The same authors have addressed this problem, in the case of a $\operatorname{Sp}(1)$-symmetric hyper-Kähler manifold, in a recent pre-print~\cite{AMR21}.
\end{rem*}

\subsection{Real polarisations}
\label{sec:real_polarisations}

The subspace $\widetilde{P}_{\tau} \coloneqq \Omega^{1,0}\bigl(X_{\tau},\mathfrak{k}^{\mathbb{C}}\bigr) \subseteq \mathcal{A}^{\mathbb{C}}$ is $\widetilde{\omega}_{t}$-Lagrangian, hence it defines a linear real polarisation. 
This descends to an $\omega_{t}$-Lagrangian subspace $P_{\tau, [A]} \coloneqq H^{1,0}_{\Dol,A}\bigl(X_{\tau},\mathfrak{k}^{\mathbb{C}}\bigr) \subseteq T_{[A]} \mathcal{M}^{\mathbb{C}}_{\fl}$ (cf. Rem.~\ref{rem:reduced_tangent_spaces}).

\begin{defn}[Real polarisations]
    We denote $P_{\tau}$ the real polarisation thus induced from the subspace $\widetilde{P}_{\tau} \subseteq \mathcal{A}^{\mathbb{C}}$.
\end{defn}

We will also denote ${P}_{\tau}$ the linear real polarisation on the subspace $\mathcal{A}_{0}^{\mathbb{C}} \subseteq \mathcal{A}^{\mathbb{C}}$, induced from $\widetilde{P}_{\tau}$---whose $\mathcal{K}_{0}$-reduction coincides with the above.

\begin{rem}[Symplectic transverse]
\label{rem:transverse_compact_moduli_space}

    By construction 
    \begin{equation}
        P_{\tau,[A]} \cap \T_{[A]} \mathcal{M}_{\fl} = H^{1,0}_{\Dol,A}\bigl(X_{\tau},\mathfrak{k}^{\mathbb{C}}\bigr) \cap H^{1}_{\dR,A}(X_{\tau},\mathfrak{k}) = \{0\} \, ,
    \end{equation} 
    so the tangent bundle $\T\mathcal{M}_{\fl} \subseteq \T\mathcal{M}^{\mathbb{C}}_{\fl}$ is transverse to the real polarisation $P_{\tau}$, and by dimension count $\T\mathcal{M}^{\mathbb{C}}_{\fl} \simeq \T\mathcal{M}_{\fl} \oplus P_{\tau}$.
    On the covering space, every leaf intersects $\mathcal{A}_{0}$ at exactly one point, and since $\mathcal{K}_{0}$ preserves this subspace the same is true for $\mathcal{M}_{\fl}$.
    Hence the moduli space for the compact group is a global symplectic transverse to the real polarisation $P_{\tau}$.
\end{rem}

\begin{rem}
	We will consider the subspace ${Q}_{\tau} \coloneqq {I}_{\tau} {P}_{\tau} \subseteq \mathcal{A}_{0}^{\mathbb{C}}$, complex conjugate of ${P}_{\tau}$ and $\widetilde{g}^{\mathbb{C}}_{\tau}$-orthogonal to it.
	Under the identification $\mathcal{A}_{0}^{\mathbb{C}} \simeq \mathcal{A}_{0} \otimes \mathbb{C}$ they correspond to $\T_{0,1} \mathcal{A}_{0}$ and $\T_{1,0} \mathcal{A}_{0}$, for the complex structure ${I}_{\tau}$.
	Hence the orthogonal projections of $A \in \mathcal{A}_{0}$ onto $P_{\tau}$ and $Q_{\tau}$ read
	\begin{equation}
		\label{eq:projPQ}
		\pi_{P_{\tau}} (A) = \frac{1}{2} \left( A - K_{\tau} A \right) \, ,
		\qquad \qquad
		\pi_{Q_{\tau}} (A) = \frac{1}{2} \left( A + K_{\tau} A \right) \, .
	\end{equation}
	The above are projective isometries with inverses $\mathbb{1} + K_{\tau}$ and $\mathbb{1} - K_{\tau}$.
\end{rem}

\begin{rem}

    Here the action of the mapping class group $\Gamma = \Gamma_{\Sigma} \simeq \SL(2,\mathbb{Z})$ on Teichm\"{u}ller space amounts to that of the modular group on the upper-half plane $\mathcal{T} \simeq \mathbb{H}$. 

    The polarisations constructed in this section only depend on $\Gamma$-orbits of Teichm\"{u}ller elements, i.e. on the (unmarked) Riemann surface structure. 
    In the K\"{a}hler-polarised setting this also holds for the K\"{a}hler metrics $g^{\mathbb{C}}_{\tau}$ and $g_{\tau}$, as they are obtained via the contraction with a $\mathcal{T}$-independent symplectic form.
\end{rem}

\section{Prequantisation and geometric quantisation}
\label{sec:prequantisation_and_quantisation}

\subsection{Prequantisation}
\label{sec:prequantum_data}

There are natural prequantum data for the real symplectic manifolds $(\mathcal{M}_{\fl},k\omega)$ and $(\mathcal{M}_{\fl}^{\mathbb{C}},\omega_{t})$, compatible with the inclusion $\mathcal{M}_{\fl} \hookrightarrow \mathcal{M}_{\fl}^{\mathbb{C}}$, provided that $k \in \mathbb{Z}_{> 0}$~\cite{freed_1995_classical_chern_simons_theory_i,freed_2002_classical_chern_simons_theory_ii,andersen_gammelgaard_2014_the_hitchin_witten_connection_and_complex_quantum_chern_simons_theory}.
The construction relies on a lift of the gauge group action on the trivial line bundle over the affine space $
\mathcal{A}^{(\mathbb{C})}$, with cocycle defined from the Chern--Simons action functional.
Explicitly, if $\gamma \colon \Sigma \to K^{\mathbb{C}}$ is a gauge transformation, then the lifted action at $A \in \mathcal{A}^{\mathbb{C}}$ is the multiplication by
\begin{equation}
	\label{eq:lifted_K_action}
	\Theta_{\gamma, A, t} \coloneqq \exp \biggl( -\frac{i}{2} \operatorname{Re} \biggl( W_{\Sigma, t} (\gamma) + \int_{\Sigma} \Braket{\Ad_{\gamma^{-1}} A \wedge \theta_{\gamma}}_{\mathfrak{k}^{\mathbb{C}}} \biggr) \biggr)
\end{equation}
where $W_{\Sigma, t}$ is the level-$t$ Wess-Zumino-Witten functional and $\theta_{\gamma}$ is the pull-back of the Maurer-Cartan form via $\gamma$.

We denote $\mathcal{L}_k \to \mathcal{M}_{\fl}$ and $\mathcal{L}^{\mathbb{C}}_{t} \to \mathcal{M}^{\mathbb{C}}_{\fl}$ the resulting line bundles, equipped with Hermitian metrics and with compatible prequantum connections $\nabla_k$ and $\nabla_t$ with curvatures $F_{\nabla_k} = -ik\omega$, $F_{\nabla_t} = -i\omega_{t}$.
In the compact case we find the $k$-fold tensor power of the standard Chern--Simons line bundle, i.e. Quillen's determinant bundle in the viewpoint of $\overline{\partial}$-operators~\cite{quillen_1985_determinantns_of_cauchy_riemann_operators_on_riemann_surfaces}.

Moreover there is an explicit finite-dimensional presentation in terms of the $\mathcal{K}_{0}$-reduction of prequantum data on the covering spaces $\mathcal{A}_{0}^{(\mathbb{C})} \twoheadrightarrow \mathcal{M}^{(\mathbb{C})}_{\fl}$, which will be denoted the same; in this case the underlying Hermitian line bundles are trivial and the prequantum connections are defined by global symplectic potentials (cf. \S~\ref{sec:bargmann_transform}).
The lifted $\mathcal{K}_{0}$-action is determined by~\eqref{eq:lifted_K_action} on the generators.
The action of an element $w \in W$ on $\mathcal{A}_{0}^{(\mathbb{C})}$ can be represented by a constant-valued gauge transformation, which pulls the Maurer-Cartan form back to $0$, showing that $W$ acts trivially on the fibres of $\mathcal{L}_{k} \to \mathcal{M}_{\fl}$ and $\mathcal{L}_{t}^{\mathbb{C}} \to \mathcal{M}_{\fl}^{\mathbb{C}}$.
On the other hand, the translation by an element $a \in \mathcal{T}_{0} \subset \mathcal{A}_{0}^{(\mathbb{C})}$ is represented by a gauge transformation $\gamma$ valued in $T \subset K$ such that $a = [\theta_{\gamma}]$.
In particular, $W_{\Sigma, t}(\gamma)$ vanishes since $T$ is abelian, and~\eqref{eq:lifted_K_action} reduces to
\begin{equation}
	\label{eq:cocycle}
	\Theta_{a, A, t} \coloneqq \Theta_{\gamma, A, t} = \exp \left( -\frac{i}{2} \omega_{t} (A, a) \right) \, .
\end{equation}

We then consider the Hilbert spaces $\operatorname{L}_{t}^{2,\mathbb{C}} = \operatorname{L}^{2}\bigl(\mathcal{M}_{\fl}^{\mathbb{C}},\mathcal{L}_{t}^{\mathbb{C}}\bigr)$ and $\operatorname{L}_{k}^{2} = \operatorname{L}^{2}(\mathcal{M}_{\fl},\mathcal{L}_{k})$ of $\operatorname{L}^2$-sections of the prequantum line bundles, with respect to the Liouville volume forms, and then the trivial Hilbert bundles with these fibres:
\begin{equation}
    \pmb{\operatorname{L}}_{t}^{2,\mathbb{C}} \coloneqq \operatorname{L}_{t}^{2,\mathbb{C}} \times \mathcal{T} \longrightarrow \mathcal{T} \, , \qquad \pmb{\operatorname{L}}_{k}^{2}\coloneqq \operatorname{L}_{k}^{2}\times \mathcal{T} \longrightarrow \mathcal{T} \, .
\end{equation} 

Analogous Hilbert spaces/bundles are defined for sections over the covering space $\mathcal{A}_{0}^{(\mathbb{C})} \twoheadrightarrow \mathcal{M}_{\fl}^{(\mathbb{C})}$.
The resulting Hilbert spaces are written $\widetilde{\operatorname{L}}_{t}^{2,\mathbb{C}} = \operatorname{L}^{2}\bigl(\mathcal{A}_{0}^{\mathbb{C}},\mathcal{L}_{t}^{\mathbb{C}}\bigr)$ and $\widetilde{\operatorname{L}}_{k}^{2} = \operatorname{L}^{2}(\mathcal{A}_{0},\mathcal{L}_{k})$, and the resulting trivial Hilbert bundles are 
\begin{equation}
	\widetilde{\pmb{\operatorname{L}}}_{t}^{2,\mathbb{C}} \coloneqq \widetilde{\operatorname{L}}_{t}^{2,\mathbb{C}} \times \mathcal{T} \longrightarrow \mathcal{T} \, , \qquad \widetilde{\pmb{\operatorname{L}}}_{k}^{2}\coloneqq \widetilde{\operatorname{L}}_{k}^{2}\times \mathcal{T} \longrightarrow \mathcal{T} \, .
\end{equation} 

\subsection{K\"{a}hler quantisation}
\label{sec:kaehler_quantisation}

Let further $\tau$ be a variable in $\mathcal{T}$, and let $I_{\tau,t}$ and $I_{\tau}$ be the complex structures of \S~\ref{sec:kaehler_polarisations_i}.
The $(0,1$)-part of the prequantum connections define holomorphic structures on $\mathcal{L}_{t}^{\mathbb{C}}$ and $\mathcal{L}$, and we consider as customary the Hilbert subspaces of holomorphic sections inside $\operatorname{L}_t^{2,\mathbb{C}}$ and $\operatorname{L}^2_k$, denoted $\mathcal{H}^{\mathbb{C}}_{\tau,t}$ and $\mathcal{H}_{\tau,k}$ respectively.

Finally we look at smooth $\mathcal{T}$-families of holomorphic sections.
In the compact case these are smooth maps $\varphi \colon \mathcal{M}_{\fl} \times \mathcal{T} \to \mathcal{L}_{k}$ whose fibrewise restriction $\eval[1]{\varphi}_{\mathcal{M}_{\fl} \times \Set{\tau}} \colon (\mathcal{M}_{\fl},I_{\tau}) \to \mathcal{L}_{k}$ is an $I_{\tau}$-holomorphic section for $\tau \in \mathcal{T}$, and analogously in the complexified case.
(Intuitively we consider fibrewise K\"{a}hler quantisation of the fibre bundle $\bm{\mathcal{M}}_{\fl,t}^{\mathbb{C}}$ and of its sub-bundle $\bm{\mathcal{M}}_{\fl,k}$.) 

Analogous Hilbert spaces and $\mathcal{T}$-families of holomorphic sections are defined for $\mathcal{A}_{0}^{(\mathbb{C})} \to \mathcal{M}_{\fl}^{(\mathbb{C})}$.
The resulting quantum Hilbert spaces are $\widetilde{\mathcal{H}}^{\mathbb{C}}_{\tau,t}$ and $\widetilde{\mathcal{H}}_{\tau,k}$.

\subsection{Real quantisation}
\label{sec:real_quantisation}

As we noted in Rem.~\ref{rem:transverse_compact_moduli_space}, $\mathcal{M}_{\fl}$ is a global transverse for $P_{\tau}$ in $\mathcal{M}_{\fl}^{\mathbb{C}}$.
This implies that any $P_{\tau}$-polarised section of $\mathcal{L}_{t}^{\mathbb{C}} \to \mathcal{M}_{\fl}^{\mathbb{C}}$ is completely determined by its restriction to $\mathcal{M}_{\fl}$.
On the other hand, the stabilizer in $\mathcal{K}_{0}$ of every point of $\mathcal{A}_{0}$ acts linearly on the corresponding leaf, so that its quotient is contractible.
Therefore, any section of $\mathcal{L}_{k} \to \mathcal{M}_{\fl}$ extends uniquely by parallel transport to a $P_{\tau}$-polarised one.
Using this ($\mathcal{T}$-dependent) identification, we let $\operatorname{L}^{2}_{k} = \operatorname{L}^{2}\bigl(\mathcal{M}_{\fl},\mathcal{L}_{k}\bigr)$ be the quantum space associated to $P_{\tau}$.

Then the bundle arising from the fibrewise real quantisation of $\mathcal{M}_{\fl}^{\mathbb{C}}$ is the trivial Hilbert bundle $\pmb{\operatorname{L}}^{2}_{k} = \operatorname{L}^{2}_{k} \times \mathcal{T} \to \mathcal{T}$. 
The same construction applies verbatim for the linear real polarisation ${P}_{\tau}$ on the covering space $\mathcal{A}_{0}^{\mathbb{C}} \to \mathcal{M}_{\fl}^{\mathbb{C}}$. 

\begin{rem*}
    The trivial bundle carries the trivial flat connection, but its trivialisation depends on the $\mathcal{T}$-dependent splitting $\T\mathcal{M}_{\fl}^{\mathbb{C}} \simeq \T\mathcal{M}_{\fl} \oplus P_{\tau}$: hence one needs to construct a \emph{canonical} projectively flat connection, as done in~\cite{witten_1991_quantization_of_chern_simons_gauge_theory_with_complex_gauge_group,andersen_gammelgaard_2014_the_hitchin_witten_connection_and_complex_quantum_chern_simons_theory}.	
\end{rem*}

In the following, we will often use the word "polarised" to mean with respect to $P_{\tau}$.
Polarised objects with respect to the K\"ahler polarisation will be referred to as holomorphic.

\section{Flat quantum connections}
\label{sec:quantum_connections}

\subsection{Complexified Hitchin connection}
\label{sec:complexified_hitchin_connection}

For $\tau \in \mathcal{T}$ denote $\operatorname{T}^{1,0} = \operatorname{T}^{1,0}\mathcal{M}_{\fl}^{\mathbb{C}}$ (resp. $\operatorname{T}_{1,0} = \operatorname{T}_{1,0}\mathcal{M}_{\fl}^{\mathbb{C}}$) the $I_{\tau,t}$-holomorphic cotangent bundle (resp. the $I_{\tau,t}$-holomorphic tangent bundle), and similarly for the antiholomorphic parts.
Set also $\operatorname{T}^{\mathbb{C}} \coloneqq \operatorname{T}^{*}\mathcal{M}_{\fl}^{\mathbb{C}} \otimes \mathbb{C}$ (resp. $\operatorname{T}_{\mathbb{C}} \coloneqq T\mathcal{M}_{\fl}^{\mathbb{C}} \otimes \mathbb{C}$) for the complexified cotangent bundle (resp. complexified tangent bundle).

If $V$ is a tangent vector on $\mathcal{T}$ the derivative $V \bigl[I_{\tau,t} \bigr]$ is a section of $\End (\operatorname{T}_{\mathbb{C}})$ swapping $\operatorname{T}_{1,0}$ and $\operatorname{T}_{0,1}$. 
Then using $\T_{\mathbb{C}} = \T_{1,0} \oplus \T_{0,1}$ decompose 
\begin{equation}
    \label{eq:decomposition_variation_complex_structure}
    V \bigl[I_{\tau,t} \bigr] = V \bigl[I_{\tau,t} \bigr]' + V \bigl[I_{\tau,t} \bigr]'' \, ,
\end{equation} 
where the former takes values inside $\T_{1,0}$.
Since $\omega_{t}$ is non-degenerate there exists a unique bi-vector field $\widetilde{G}^{\mathbb{C}} (V)$ such that $\widetilde{G}^{\mathbb{C}} (V) \cdot \omega_{t} = \abs{t} V[ I_{\tau , t}]$, with an analogous splitting 
\begin{equation}
    \widetilde{G}^{\mathbb{C}}(V) = G^{\mathbb{C}}(V) + \overline{G}^{\mathbb{C}}(V) \, ,
\end{equation} 
with $G^{\mathbb{C}}(V) \in \Omega^0\bigl(\mathcal{M}_{\fl}^{\mathbb{C}},\operatorname{T}_{1,0} \otimes \operatorname{T}_{1,0}\bigr)$ and $\overline{G}^{\mathbb{C}}(V) \in \Omega^0\bigl(\mathcal{M}_{\fl}^{\mathbb{C}},\operatorname{T}_{0,1} \otimes \operatorname{T}_{0,1}\bigr)$.

\begin{rem*}

If $\widetilde{g}^{\mathbb{C}}_{\tau}$ is the inverse of the K\"{a}hler metric, one has
$V[\widetilde{g}_{\tau}^{\mathbb{C}}] \cdot \omega_{t} = - \abs{t} V[ I_{\tau, t}]$, since $\omega$ is $\mathcal{T}$-independent; hence we may write $\widetilde{G}^{\mathbb{C}}(V) = -V\bigl[ \widetilde{g}^{\mathbb{C}}_{\tau} \bigr]$.
In particular $\widetilde{G}^{\mathbb{C}}(V)$, $G^{\mathbb{C}}(V)$ and $\overline{G}^{\mathbb{C}}(V)$ are symmetric tensors.
\end{rem*}

\begin{rem*}
	Since the Levi-Civita connection is independent of $\tau$, differentiating $\nabla g_{\tau}^{\mathbb{C}} = 0$ along $V$ shows that $\widetilde{G}^{\mathbb{C}}(V)$ is parallel, and therefore so are its two components.
	In particular $G^{\mathbb{C}}(V)$ is holomorphic---we say the family of complex structures $\Set{ I_{\tau,t} }_{\tau \in \mathcal{T}}$ is \emph{rigid}~\cite{andersen_2012_hitchin_connection_toeplitz_operators_and_symmetry_invariant_deformation_quantisation}.
\end{rem*}

Now we consider the Laplacian operator associated to the symmetric tensor $G^{\mathbb{C}}(V)$, i.e. formally $\Delta_{G^{\mathbb{C}}(V)} = \Tr \bigl(\nabla^{1,0}_t G^{\mathbb{C}}(V) \nabla^{1,0}_t\bigr)$, see op. cit.
Letting $V$ vary yields a 1-form $u^{\mathbb{C}} \coloneqq - \Delta_{G^{\mathbb{C}}(\bullet)}$ on $\mathcal{T}$, with values in differential operators acting on smooth sections of the prequantum line bundle.

\begin{thm}
\label{thm:complexified_hitchin_connection}

The connection $\nabla^{\mathbb{C}} \coloneqq \nabla^{\Tr} - \frac{1}{4 \lvert t \rvert } u^{\mathbb{C}}$ is flat and preserves holomorphicity.\footnote{Hence~\cite[Rem.~4.16]{rembado_2018_quantisation_of_moduli_spaces_and_connections} is vindicated.}
\end{thm}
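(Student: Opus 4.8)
The plan is to pull everything back to the finite-dimensional affine cover $\mathcal{A}_0^{\mathbb{C}} \twoheadrightarrow \mathcal{M}_{\fl}^{\mathbb{C}}$ of \S~\ref{sec:finite_dimensional_presentation}, where the prequantum bundle is trivial, $\nabla_t$ is given by a global symplectic potential, and---by Rem.~\ref{rem:riemannian_structure}---the Levi-Civita connection of each $g_{\tau,t}^{\mathbb{C}}$ vanishes, so that $G^{\mathbb{C}}(V)$ is parallel and $I_{\tau,t}$-holomorphic. All objects descend $\mathcal{K}_0$-equivariantly, so it suffices to verify the two claims upstairs. First I would record that, in flat coordinates adapted to $I_{\tau,t}$, the operator $\Delta_{G^{\mathbb{C}}(V)} = \Tr\bigl(\nabla_t^{1,0} G^{\mathbb{C}}(V) \nabla_t^{1,0}\bigr)$ has constant principal symbol $G^{\mathbb{C}}(V)$ and lower-order terms governed solely by the constant curvature $F_{\nabla_t} = -i\omega_t$; crucially the flatness of the metric forces the Ricci/canonical-bundle corrections that appear in the general Hitchin connection to drop out, leaving the bare second-order operator. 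This reduces the theorem to algebraic identities among the $\nabla_t^{1,0}$, $\nabla_t^{0,1}$ and the constant tensors $G^{\mathbb{C}}(V)$.

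For preservation of holomorphicity, take a $\mathcal{T}$-family $\varphi$ with $\nabla_t^{0,1}\varphi = 0$ fibrewise and differentiate this relation along $V$, obtaining $\nabla_t^{0,1}\bigl(\nabla^{\Tr}_V\varphi\bigr) = -\bigl(V[\nabla_t^{0,1}]\bigr)\varphi$. Since $\nabla^{\mathbb{C}}_V = \nabla^{\Tr}_V + \tfrac{1}{4\abs{t}}\Delta_{G^{\mathbb{C}}(V)}$, establishing $\nabla_t^{0,1}\bigl(\nabla^{\mathbb{C}}_V\varphi\bigr) = 0$ amounts to the single commutator identity $\bigl[\nabla_t^{0,1}, \Delta_{G^{\mathbb{C}}(V)}\bigr]\varphi = 4\abs{t}\,\bigl(V[\nabla_t^{0,1}]\bigr)\varphi$ on holomorphic $\varphi$. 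The key point is that both sides involve the \emph{same} tensor: the variation $V[\nabla_t^{0,1}]$ is controlled by the $\operatorname{T}_{1,0}$-valued part $V[I_{\tau,t}]'$, hence by $G^{\mathbb{C}}(V)$ through $\widetilde{G}^{\mathbb{C}}(V)\cdot\omega_t = \abs{t}\,V[I_{\tau,t}]$, while commuting $\nabla_t^{0,1}$ through the two holomorphic derivatives produces exactly $G^{\mathbb{C}}(V)$ contracted once against the curvature $F_{\nabla_t} = -i\omega_t$. Matching these two expressions is what pins the coefficient to $\tfrac{1}{4\abs{t}}$.

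For flatness, I would compute the curvature of $\nabla^{\mathbb{C}}$ as a $2$-form on $\mathcal{T}$. As $\nabla^{\Tr}$ is flat this reduces to $F_{\nabla^{\mathbb{C}}}(V,W) = \tfrac{1}{4\abs{t}}\Delta_{V[G^{\mathbb{C}}(W)] - W[G^{\mathbb{C}}(V)] - G^{\mathbb{C}}([V,W])} + \tfrac{1}{16\abs{t}^{2}}\bigl[\Delta_{G^{\mathbb{C}}(V)}, \Delta_{G^{\mathbb{C}}(W)}\bigr]$. The first-order term vanishes because $V \mapsto \widetilde{G}^{\mathbb{C}}(V) = -V[\widetilde{g}^{\mathbb{C}}_{\tau}]$ is (minus) an exact, hence closed, operator-valued one-form on $\mathcal{T}$, and rigidity lets one pass this closedness to the $(1,0)$-part $G^{\mathbb{C}}$. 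The commutator term vanishes identically: the constant tensors $G^{\mathbb{C}}(V)$, $G^{\mathbb{C}}(W)$ are built from $\nabla_t^{1,0}$-derivatives, which mutually commute since the $(2,0)$-curvature $F^{2,0}_{\nabla_t}$ of the Kähler form is zero, so both Laplacians are polynomials in commuting operators with parallel coefficients. The one-complex-dimensionality of $\mathcal{T} \simeq \mathbb{H}$ further collapses this to a single scalar check.

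The main obstacle is the holomorphicity commutator above: one must carefully commute $\nabla_t^{0,1}$ past the constant-coefficient operator, discard the terms that annihilate holomorphic $\varphi$, and verify that the surviving curvature contribution---expressed through $F_{\nabla_t} = -i\omega_t$ and the defining relation for $\widetilde{G}^{\mathbb{C}}(V)$---reproduces $4\abs{t}\,V[\nabla_t^{0,1}]$ with the correct numerical factor. Everything else is either formal (the reduction to the cover and the curvature expansion) or follows from rigidity and $F^{2,0}_{\nabla_t} = 0$. The mapping class group invariance, not asserted in this statement, would follow separately since all the data are natural in $\tau \in \mathcal{T}$ while $\omega$ is $\mathcal{T}$-independent.
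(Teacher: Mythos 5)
Your proposal is correct and follows essentially the same route as the paper: differentiate the polarisation condition $\nabla_t^{0,1}\varphi=0$ along $V$ to produce the $V[I_{\tau,t}]$-term, commute $\nabla_t^{0,1}$ through $\Delta_{G^{\mathbb{C}}(V)}$ using $F_{\nabla_t}^{2,0}=0$, $F_{\nabla_t}^{1,1}=-i\omega_t$ and the triviality of the Levi-Civita connection, and match the two via $\widetilde{G}^{\mathbb{C}}(V)\cdot\omega_t=\abs{t}\,V[I_{\tau,t}]$; for flatness, kill the first-order term by exactness of $V\mapsto -V[\widetilde{g}^{\mathbb{C}}_{\tau}]$ and the commutator term by the same tensor identities. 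The only piece you flag rather than execute --- the numerical verification that the curvature contribution reproduces $4\abs{t}\,V[\nabla_t^{0,1}]$ --- is exactly the short computation the paper carries out, and your setup makes it routine.
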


In the proof we will use the following identities, valid for vector fields $V$ on $\mathcal{T}$:
\begin{equation}
    \label{eq:tensor_identities}
    \begin{split}
        &V\bigl[ \nabla_{t} \bigr] = V[\omega_t] = 0 = \nabla_{t}\omega_{t}= \nabla_{t} G^{\mathbb{C}}(V) \, , \\
        \bigl[ \nabla^{1,0}_{t},\nabla^{1,0}_{t}\bigr] = &F_{\nabla_t}^{2,0} = -i\omega_t^{2,0} = 0, \qquad \bigl[ \nabla^{0,1}_{t},\nabla^{1,0}_{t}\bigr] = F_{\nabla_t}^{1,1} = -i\omega_{t}\, . 
    \end{split}
\end{equation}

They come from the following facts: $\nabla_t$ and $\omega_t$ are independent of $\tau$, $\omega_t$ is a K\"{a}hler form, $G^{\mathbb{C}}(V)$ is parallel, and $F_{\nabla_t} = -i\omega_t$ is of bidegree $(1,1)$.

\begin{proof}

Choose a $\mathcal{T}$-family of holomorphic sections $\varphi$, a vector field $X$ on $\mathcal{M}_{\mathrm{fl}}^{\mathbb{C}}$, and a vector field $V$ on $\mathcal{T}$.
We will start by showing $\nabla^{0,1}_{t,X} \bigl( V[\varphi] \bigr) = \frac{1}{4 \lvert t \rvert } \nabla^{0,1}_{t,X} \bigl( \Delta_{G^{\mathbb{C}}(V)} \varphi \bigr)$.

Using $\nabla^{0,1}_{t} \varphi = 0$ and~\eqref{eq:tensor_identities} yields
\begin{equation}
		\nabla_{t,X}^{0,1} \Delta_{G^{\mathbb{C}}(V)} \varphi
		= \Tr \Bigl(\bigl[\nabla^{0,1}_{t,X},\nabla^{1,0}_{t}\bigr] G^{\mathbb{C}}(V) \nabla^{1,0} \varphi + \nabla^{1,0}_{t}G^{\mathbb{C}}(V)  \bigl[\nabla^{0,1}_{t,X},\nabla^{1,0}_{t}\bigr] \varphi\Bigr) \, . 
\end{equation} 
Since the Levi--Civita connection is flat, both commutators are controlled by the contraction $-i X \cdot \omega$.
Then the symmetry of $G^{\mathbb{C}}(V)$ and~\eqref{eq:tensor_identities} yield
\begin{equation}
        \nabla_{t,X}^{0,1} \Delta_{G^{\mathbb{C}}(V)} \varphi
        = -2i X \cdot \bigl(\omega_{t} \cdot G^{\mathbb{C}}(V) \bigr) \cdot \nabla^{1,0}_{t}\varphi
        = 2i \lvert t \rvert \bigl( V \bigl[ I_{\tau,t} \bigr] X \bigr) \cdot \nabla_{t}\varphi \, ,
\end{equation} 
where in the last passage we use that the antiholomorphic parts do not contribute.

For the other term, differentiating the identity $\nabla^{0,1}_{t,X}\varphi = 0$ along $V$ yields 
\begin{equation}
    0 = V\bigl[\nabla^{0,1}_{t,X}\bigr] \varphi + \nabla^{0,1}_{t,X}V[\varphi] = \frac{i}{2} \bigl(V \bigl[I_{\tau,t} \bigr] X \bigr) \cdot \nabla_{t}\varphi + \nabla^{0,1}_{t,X}V[\varphi] \, ,
\end{equation} 
using 
\begin{equation}
    \nabla^{0,1}_{t,X}= \frac{1}{2} \bigl( (\mathbb{1} + iI_{\tau,t}) X \bigr) \cdot \nabla_{t}\, , \qquad V \bigl[ \nabla^{0,1}_{t,X}\bigr] = \frac{i}{2} \bigl( V \bigl[ I_{\tau,t} \bigr] X \bigr) \cdot \nabla_{t}\, .
\end{equation} 

For the second statement, if $V'$ is a (local) vector field on $\mathcal{T}$ that commutes with $V$, the curvature reads 
\begin{equation}
	\label{eq:flatness_proof}
    \big\langle F_{\nabla^{\mathbb{C}}}, V \wedge V' \big\rangle = - \frac{1}{4 \lvert t \rvert} \Bigl( V'\bigl[ \Delta_{G^{\mathbb{C}}(V)} \bigr] - V \bigl[ \Delta_{G^{\mathbb{C}}(V')} \bigr] \Bigr) + \frac{1}{16 \lvert t \rvert^{2}} \bigl[\Delta_{G^{\mathbb{C}}(V)},\Delta_{G^{\mathbb{C}}(V')} \bigr] \, ,
\end{equation} 
and we will show both summands vanish.

For the former, since $\widetilde{G}(V) = -V \bigl[ \widetilde{g}^{\mathbb{C}}_{\tau} \bigr]$ it follows that $\Delta_{G(V)} = - V \bigl[ \Delta_{\widetilde{g}^{\mathbb{C}}_{\tau}} \bigr]$ as differential operators acting on holomorphic sections of $\mathcal{L}_{t}^{\mathbb{C}}$.
Hence 
\begin{equation}
    V'\bigl[ \Delta_{G^{\mathbb{C}}(V)} \bigr] - V \bigl[ \Delta_{G^{\mathbb{C}}(V')} \bigr] = - V'V \bigl[ \Delta_{\widetilde{g}^{\mathbb{C}}_{\tau}} \bigr] + VV' \bigl[ \Delta_{\widetilde{g}^{\mathbb{C}}_{\tau}} \bigr] = \bigl[V,V'\bigr] \bigl[ \Delta_{\widetilde{g}^{\mathbb{C}}_{\tau}} \bigr] = 0 \, .
\end{equation} 

For the rightmost term of~\eqref{eq:flatness_proof}, we may use the Leibnitz rule to expand the commutator $\bigl[ \nabla^{1,0}_{t}G^{\mathbb{C}}(V) \nabla^{1,0}_{t}, \nabla^{1,0}_{t}G^{\mathbb{C}}(V') \nabla^{1,0}_{t}\bigr]$, which vanishes because of the identities~\eqref{eq:tensor_identities}, and because the contractions with $G^{\mathbb{C}}(V)$ and $G^{\mathbb{C}}(V')$ commute.
Hence
\begin{equation}
    \bigl[ \Delta_{G^{\mathbb{C}}(V)},\Delta_{G^{\mathbb{C}}(V')} \bigr] = \Tr \bigl[ \nabla^{1,0}_{t} G^{\mathbb{C}}(V) \nabla^{1,0}_{t},\nabla^{1,0}_{t} G^{\mathbb{C}}(V') \nabla^{1,0}_{t} \bigr] = 0 \, .
\end{equation} 
\end{proof}

The connection of Thm.~\ref{thm:complexified_hitchin_connection} should be compared with the Hitchin connection~\cite{hitchin_1990_flat_connections_and_geometric_quantisation,axelrod_dellapietra_witten_1991_geometric_quantisation_of_chern_simons_gauge_theory, andersen_2012_hitchin_connection_toeplitz_operators_and_symmetry_invariant_deformation_quantisation}.
In fact, noting that the Ricci potential on $\mathcal{M}_{\fl}$ vanishes, Eq.~1 of~\cite[Thm.~1]{andersen_2012_hitchin_connection_toeplitz_operators_and_symmetry_invariant_deformation_quantisation} is formally analogous to $\nabla^{\mathbb{C}}$ up to replacing $t$ with $k$, while Thm.~\ref{thm:complexified_hitchin_connection} shows it enjoys the two key properties of the original Hitchin connection.
We thus refer to this object as the \emph{complexified} Hitchin connection.

\begin{rem}
\label{rem:mapping_class_group_invariance}
 
 The complexified Hitchin connection of Thm.~\ref{thm:complexified_hitchin_connection} is $\Gamma$-invariant, being based on the variation of the Laplace--Beltrami operator for the $\Gamma$-equivariant K\"{a}hler metric (cf.~\cite[Lem.~6]{andersen_2012_hitchin_connection_toeplitz_operators_and_symmetry_invariant_deformation_quantisation}).
\end{rem}

This construction can be carried out on $\mathcal{A}_0^{\mathbb{C}}$, producing a $\Gamma$-invariant flat connection acting on $\mathcal{T}$-families of holomorphic sections of ${\mathcal{L}}^{\mathbb{C}}_{t} \to \mathcal{A}_{0}^{\mathbb{C}}$, which we refer to as the \emph{lifted} complexified Hitchin connection, also denoted ${\nabla}^{\mathbb{C}} = \nabla^{\Tr} - \frac{1}{4 \abs{t}} {u}^{\mathbb{C}}$.

\subsection{Hitchin--Witten connection}
\label{sec:hitchin_witten_connection}

Analogously to the previous \S~\ref{sec:complexified_hitchin_connection} consider the Laplacians 
\begin{equation}
    \Delta_{G(V)} \coloneqq \Tr \bigl( \nabla^{1,0} G(V) \nabla^{1,0}), \qquad \Delta_{\overline{G}(V)} \coloneqq \Tr \bigl( \nabla^{0,1} \overline{G}(V) \nabla^{0,1} \bigr) \, ,
\end{equation} 
using the variation of $I_{\tau}$ on $\mathcal{M}_{\fl}$.
Then we have the connection
\begin{equation}
\label{eq:hitchin_witten_connection}
    \nabla^{\HW} = \nabla^{\Tr} - \frac{1}{2} u^{\HW}, \qquad \text{where} \qquad u^{\HW}(V) \coloneqq \frac{1}{\overline{t}} \Delta_{\overline{G}(V)} - \frac{1}{t} \Delta_{G(V)} \, ,
\end{equation}
acting on $\mathcal{T}$-families of smooth sections of $\mathcal{L}_k \to \mathcal{M}_{\fl}$.
Since $g_{\tau}$ is flat this is a particular instance of the connection studied in~\cite{witten_1991_quantization_of_chern_simons_gauge_theory_with_complex_gauge_group}, i.e. the genus-one analogue of \cite{andersen_gammelgaard_2014_the_hitchin_witten_connection_and_complex_quantum_chern_simons_theory} (which considers the higher-genus case).
We refer to it as the \emph{Hitchin--Witten} connection.

The tensor calculus developed in~\cite{andersen_gammelgaard_2014_the_hitchin_witten_connection_and_complex_quantum_chern_simons_theory} applies to the genus-one case as well, and can be used to deduce flatness, analogously to the proof of Thm.~\ref{thm:complexified_hitchin_connection} (see also~\cite{malusa_2018_geometric_quantisation_the_hitchin_witten_connection_and_quantum_operators_in_complex_chern_simons_theory,AM}).
What is more, Witten argues in~\cite{witten_1991_quantization_of_chern_simons_gauge_theory_with_complex_gauge_group} that the connection admits an explicit trivialisation, something that was further explored for semi-simple groups in~\cite{AM,andersen_malusa_2017_the_aj_conjecture_for_the_teichmueller_tqft} and exploited in~\cite{andersen_malusa_2017_the_aj_conjecture_for_the_teichmueller_tqft,AM19}.

\begin{rem}
    Analogously to Rem.~\ref{rem:mapping_class_group_invariance}, the Hitchin--Witten connection~\eqref{eq:hitchin_witten_connection} is invariant under the group of bundle automorphisms of $\pmb{\operatorname{L}}^{2}_{k} \to \mathcal{T}_{\Sigma}$ defined by the mapping class group. 
\end{rem}

This construction can be carried out on $\mathcal{A}_{0}$, producing a $\Gamma$-invariant flat connection acting on $\mathcal{T}$-families of smooth section of ${\mathcal{L}}_{K} \to \mathcal{A}_{0}$, which we refer to as the \emph{lifted} Hitchin--Witten connection, also denoted ${\nabla}^{\HW} = \nabla^{\Tr} - \frac{1}{2} {u}^{\HW}$.

\section{The Bargmann transform}
\label{sec:bargmann_transform}

In this section we shall recall the general facts about the geometric quantisation on $\mathbb{C}^{m}$, $m$ a positive integer, and the Bargmann transform.
In coordinates $z_{j} = p_{j} + i q_{j}$, the standard symplectic structure can be expressed as
\begin{equation}
	{\omega} = \sum_{j = 1}^{m} \dif p_{j} \wedge \dif q_{j} = \frac{i}{2} \sum_{j = 1}^{m} \dif z_{j} \wedge \dif \overline{z}_{j} \, .
\end{equation}
There is a unique pre-quantum line bundle $\mathcal{L}^{\mathbb{C}}_{\hslash}$, up to isomorphism, for every positive real parameter $\hslash$.
We will fix the trivialisation so the pre-quantum connection reads $\nabla_{h} = \dif - \frac{i}{\hslash} \alpha$, where $\alpha$ is the invariant symplectic potential
\begin{equation}
	{\alpha} = \frac{i}{4} \sum_{j = 1}^{\frac{m}{2}} \bigl( z_{j} \dif \overline{z}_{j} - \overline{z}_{j} \dif z_{j} \bigr) = \frac{1}{2} \sum_{j = 1}^{m} \bigl( p_{j} \dif q_{j} - q_{j} \dif p_{j} \bigr) \, .
\end{equation}
One easily checks that the smooth functions
\begin{equation}
\label{eq:frames}
	\sigma \coloneqq \left( \frac{1}{2 \pi \hslash} \right)^{\frac{m}{2}} \exp \left( - \frac{1}{4 \hslash} \lvert \bm{z} \rvert^{2} \right)	\, , \qquad \rho \coloneqq \exp \left( - \frac{i}{2 \hslash} \sum_{j = 1}^{m} p_{j} q_{j} \right) \, ,
\end{equation}
are polarised frames for the tautological Kähler structure and the real polarisation $P = \mathbb{R}^{m}$, respectively.

The Hilbert space $\widetilde{\mathcal{H}}^{\mathbb{C}}_{\hslash}$ from Kähler quantisation, consisting of $\operatorname{L}^{2}$ holomorphic sections of $\mathcal{L}_{\hslash}$, can be identified with that of holomorphic \emph{functions} with finite $\operatorname{L}^{2}$-norm with respect to the Gaussian measure $\sigma^{2}$.
The latter is called the Segal--Bargmann space~\cite{bargmann_1961_hilbert_space_of_analytic_functions_and_associated_integral_transform,segal_1963_mathematical_problems_of_relativistic_physics}.
We will use the notation $f$ for a function and $\varphi = f \sigma$ for the corresponding section, and use the two viewpoints at convenience.

Analogously, an element of the quantum Hilbert space $\widetilde{\mathcal{H}}_{P}$ arising from $P$ can be viewed as either a function $\psi$ of the variables $q_{j}$ alone or as the corresponding polarised section $\psi \rho$.
The intrinsic definition of the inner product uses half-forms, but up to appropriate natural choices it can be identified with the $\operatorname{L}^{2}$-pairing for functions on $Q \coloneqq i \mathbb{R}^{m}$.
We shall often abuse notation and call $\psi$ both objects; note that they agree on $Q$ since $\eval[1]{\rho}_{{Q}} \equiv 1$.

The two quantum Hilbert spaces are related by a non-degenerate pairing, which for $\psi$ in an appropriate dense subspace is the $\operatorname{L}^{2}$-pairing on $\mathbb{C}^{m}$.
This defines by duality a linear isomorphism $\mathcal{B} \colon \widetilde{\mathcal{H}}_{P} \to \widetilde{\mathcal{H}}^{\mathbb{C}}_{\hslash}$, whose inverse we will denote $\mathcal{B}'$, which can be written in integral form as
\begin{equation}
	\label{eq:bargmann_new}
	\bigl(\mathcal{B}(\psi)\bigr) (\bm{z}) = \int_{\mathbb{R}^{m}} \psi(\bm{q}') B(\bm{q}',\bm{z}) \dif \bm{q} \, ,
	\qquad
	\bigl(\mathcal{B}'(\varphi)\bigr)(\bm{q}) = \int_{\mathbb{C}^{m}} \varphi(\bm{z}') \overline{B(\bm{q},\bm{z}')} \dif \bm{z}' \, ,
\end{equation}
where $\dif \bm{q}'$ and $\dif \bm{z}'$ denote the respective volume forms and
\begin{equation}
	\label{eq:Barg_kernel}
	\begin{split}
		B(\bm{q}',\bm{z}) \coloneqq{}& \left(\frac{\abs{t}^{3}}{4\pi^{3}}\right)^{\frac{m}{4}} \exp\left(-\frac{\abs{t}}{4} \bigl(2 \abs{\bm{q}'}^{2} + 4i\bm{q}'\cdot\bm{z} - \bm{z}\cdot\bm{z} + \abs{\bm{z}}^{2}\bigr)\right)=\\
		={}& \left(\frac{\abs{t}^{3}}{4\pi^{3}}\right)^{\frac{m}{4}} \exp\left(-\frac{\abs{t}}{2}\abs{\bm{q}-\bm{q}'}^{2}\right) \exp\left(-\frac{i\abs{t}}{2} \bm{p} \cdot(2\bm{q}'-\bm{q})\right)\, .
	\end{split}
\end{equation}
We emphasise that, in this form, the output of the Bargmann transform is a holomorphic section, rather than a function.
In this normalisation, the Bargmann transform is a unitary isomorphism between the quantum Hilbert spaces. 
These formul\ae{} are equivalent to those of~\cite[Chap.~V, \S~7]{guillemin_sternberg_1977_geometric_asymptotics} (or~\cite[Eq.~1.4]{bargmann_1961_hilbert_space_of_analytic_functions_and_associated_integral_transform}), only that we insist in using an invariant symplectic potential and that we parametrise differently the complex coordinates.

In the following we will often consider the operators $a^{*}_{j} f \coloneqq z_{j} f$ and $a_{j} f \coloneqq 2 \hslash \frac{\partial f}{\partial z_{j}}$, which are mutually adjoint in $\widetilde{\mathcal{H}}^{\mathbb{C}}_{\hslash}$ and often referred to as the ladder operators.
We will later use that, if $\pi^{\widetilde{\mathcal{H}}}$ denotes the orthogonal projection of the space of all $\operatorname{L}^{2}$ functions to the closed subspace $\widetilde{\mathcal{H}}^{\mathbb{C}}_{\hslash}$, then
\begin{equation}
	\label{eq:proj_zbar}
	\pi^{\widetilde{\mathcal{H}}} (\overline{z}_{j}f) = 2 \hslash \frac{\partial f}{\partial z_{j}} \, .
\end{equation}

We shall use the following fundamental property of the Bargmann transform, expressing the fact that it identifies the two quantum Hilbert spaces as Fock representations.

\begin{prop}[cf.~\cite{bargmann_1961_hilbert_space_of_analytic_functions_and_associated_integral_transform}, \S~1.8.i]
\label{prop:ladder}
If $\psi$ is a smooth function with $\psi, q_{j}\psi, \frac{\partial \psi}{\partial q_{j}} \in \widetilde{\mathcal{H}}_{\hslash}$ for a fixed $j$, then $\mathcal{B}(\psi)$ lies in the domain of the operators $a_{j} \pm a_{j}^{*}$, and
\begin{equation}
\label{eq:commutation_bargmann}
	\begin{aligned}
		\mathcal{B} \left( q_{j} \psi \right) ={}& \frac{i}{2} (a_{j} - a^{*}_{j}) \mathcal{B}(\psi) \, , \\
		\mathcal{B} \left(\frac{\partial \psi}{\partial q_{j}} \right) ={}& \frac{i}{2\hslash} (a_{j} + a^{*}_{j}) \mathcal{B}(\psi) \, .
	\end{aligned}
\end{equation}
	
\end{prop}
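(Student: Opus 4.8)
The plan is to work directly with the integral representation~\eqref{eq:bargmann}, viewing $\mathcal{B}(\psi)$ as a holomorphic function of $\bm{z}$, and to reduce both identities to differentiating and integrating the Gaussian kernel $K(\bm{q},\bm{z}) \coloneqq \exp\bigl(-\tfrac{1}{4\hslash}(4i\bm{q}\cdot\bm{z} + 2\lvert\bm{q}\rvert^{2} - \bm{z}\cdot\bm{z})\bigr)$ under the integral sign. The two computations that drive everything are the partial derivatives $\partial_{z_{j}}K = \bigl(\tfrac{z_{j}}{2\hslash} - \tfrac{i q_{j}}{\hslash}\bigr)K$ and $\partial_{q_{j}}K = -\tfrac{1}{\hslash}(q_{j} + i z_{j})K$, both immediate from the explicit exponent. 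Once these are in hand, the algebra is short; the content lies in justifying the operations analytically, which is exactly what the hypotheses control.

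For the first line of~\eqref{eq:commutation_bargmann} I would differentiate under the integral sign. Since $a_{j} = 2\hslash\,\partial_{z_{j}}$ on the Segal--Bargmann side, applying $a_{j}$ to $\mathcal{B}(\psi)$ pulls the factor $2\hslash\,\partial_{z_{j}}K = (z_{j} - 2iq_{j})K$ inside the integral. Recognising the $z_{j}$-term as multiplication by $a_{j}^{*}$ acting on $\mathcal{B}(\psi)$, and the $q_{j}$-factor as producing $\mathcal{B}(q_{j}\psi)$, this gives $a_{j}\mathcal{B}(\psi) = a_{j}^{*}\mathcal{B}(\psi) - 2i\,\mathcal{B}(q_{j}\psi)$, which rearranges at once to $\mathcal{B}(q_{j}\psi) = \tfrac{i}{2}(a_{j} - a_{j}^{*})\mathcal{B}(\psi)$.

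For the second line I would integrate by parts in $q_{j}$: provided the boundary terms vanish, $\mathcal{B}\bigl(\tfrac{\partial\psi}{\partial q_{j}}\bigr) = -\tfrac{1}{(\pi\hslash)^{m/4}}\int \psi\,\partial_{q_{j}}K\,\dif^{m}\bm{q} = \tfrac{1}{\hslash}\bigl(i z_{j}\mathcal{B}(\psi) + \mathcal{B}(q_{j}\psi)\bigr)$. Writing $z_{j}\mathcal{B}(\psi) = a_{j}^{*}\mathcal{B}(\psi)$, substituting the first identity for $\mathcal{B}(q_{j}\psi)$, and collecting the two $a_{j}^{*}$-contributions (whose coefficients $\tfrac{i}{\hslash}$ and $-\tfrac{i}{2\hslash}$ combine to $\tfrac{i}{2\hslash}$) yields $\tfrac{i}{2\hslash}(a_{j} + a_{j}^{*})\mathcal{B}(\psi)$, as desired.

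The substance of the argument is analytic rather than algebraic, and this is where I expect the main obstacle. The hypotheses $\psi, q_{j}\psi, \tfrac{\partial\psi}{\partial q_{j}} \in \widetilde{\mathcal{H}}_{\hslash}$ are precisely what is needed to legitimise the two formal steps: a locally uniform Gaussian bound on $\partial_{z_{j}}K$ supports differentiation under the integral sign by dominated convergence, while the integrability of $\psi$ and $q_{j}\psi$ forces the boundary terms in the integration by parts to vanish. The same memberships guarantee that $\mathcal{B}(q_{j}\psi)$ and $\mathcal{B}\bigl(\tfrac{\partial\psi}{\partial q_{j}}\bigr)$ are genuine elements of the Segal--Bargmann space, so that the identities hold as equalities of $\operatorname{L}^{2}$ vectors and place $\mathcal{B}(\psi)$ in the domains of $a_{j} \pm a_{j}^{*}$, rather than merely pointwise. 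A clean way to make all of this rigorous is a density reduction to Schwartz-class $\psi$, where every manipulation above is unconditionally valid, followed by a limiting argument using the continuity of $\mathcal{B}$ and the closedness of the ladder operators.
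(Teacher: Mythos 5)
Your proposal is correct. The paper gives no proof of this proposition, deferring entirely to Bargmann's original paper (\S~1.8.i), and your direct computation from the integral kernel --- the two kernel derivatives $\partial_{z_j}K$ and $\partial_{q_j}K$, differentiation under the integral sign for the first identity, integration by parts for the second, with the coefficient bookkeeping checking out exactly --- is precisely the classical derivation, and your closing remarks on how the $\operatorname{L}^2$ hypotheses legitimise the two formal steps and yield the domain assertion (via unitarity of $\mathcal{B}$ and closedness of the ladder operators, after a density reduction to Schwartz class) supply the analytic justification the statement requires.
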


\begin{rem}
\label{rem:general_linear_space}
	The setup described in this section applies to any abstract linear symplectic space with a Kähler and a real polarisation, the identification being obtained by choosing any orthonormal basis of the real Lagrangian.
\end{rem}

\section{Coordinates and frames}
\label{sec:coordinates_and_frames}

In this section we define local coordinates on the moduli spaces, and fix conventions for later use. 
The same discussion is presented in further detail for $K = \SU(2)$ and $K^{\mathbb{C}} = \SL(2,\mathbb{C})$ in~\cite{malusa_2018_geometric_quantisation_the_hitchin_witten_connection_and_quantum_operators_in_complex_chern_simons_theory,andersen_malusa_2017_the_aj_conjecture_for_the_teichmueller_tqft,rembado_2018_quantisation_of_moduli_spaces_and_connections}.

Consider on $\Sigma$ the coordinates $(x,y)$ induced by the identification $\Sigma \simeq \mathbb{R}^{2}/\mathbb{Z}^{2}$.
The choice of a basis $(T_{1} , \dotsc , T_{r})$ of $\mathfrak{t}$ induces global linear coordinates $\bm{w} = \bm{u} + i \bm{v}$ on $\mathcal{A}_0^{\mathbb{C}}$ via the identification $\mathcal{A}_{0}^{\mathbb{C}} \simeq H^{1} (\Sigma , \mathbb{R}) \otimes \mathfrak{t}^{\mathbb{C}}$; similarly, $\bm{u}$ defines coordinates on $\mathcal{A}_{0}$.
Having fixed coordinates on $\Sigma$ one can identify $\mathcal{A}_{0}^{(\mathbb{C})}$ with the space of $\mathfrak{t}^{(\mathbb{C})}$-valued $1$-forms with constant coefficients.
Assuming in addition that the basis $(T_{1} , \dotsc , T_{r})$ is $\langle \cdot,\cdot \rangle_{\mathfrak{k}^{\mathbb{C}}}$-orthonormal then ${\omega}^{\mathbb{C}} = \sum_{j = 1}^{r} \dif w_{j} \wedge \dif w_{r+j}$.
These coordinates however do not trivialise the additional structure induced by a choice of $\tau \in \mathcal{T}$, so we introduce new $\mathcal{T}$-dependent ones.

Given $\tau = \tau_{1} + i \tau_{2} \in \mathbb{H}$, the corresponding class of Kähler structures is represented by one with holomorphic coordinate $\zeta_{\tau} \coloneqq x + \tau y$ and Hodge $*$-operator
\begin{equation}
   	* \dif x ={} \frac{1}{\tau_{2}} \bigl( \tau_{1} \dif x + \abs{\tau}^{2} \dif y \bigr) \, , \qquad * \dif y ={} - \frac{1}{\tau_{2}} \bigl( \dif x + \tau_{1} \dif y \bigr) \, .
\end{equation}
It is immediate to check that the standard decomposition of a $\mathfrak{t}^{\mathbb{C}}$-valued connection form with constant coefficients trivially satisfies Hitchin's equations for this structure, meaning that its harmonic metric is the trivial one.
In turn, since harmonic forms with respect to this metric are precisely those with constant coefficients, the complex structure $I_{\tau}^{(\mathbb{C})}$ is represented, in the model of $\mathcal{A}_{0}^{(\mathbb{C})}$ just introduced above, by the trivial Hodge $*$-operator.
Note in particular that the resulting (hyper-)K\"ahler structure on $\mathcal{A}_{\tau}^{(\mathbb{C})}$ is linear, so the Levi-Civita connection is trivial as claimed earlier.

An orthonormal basis of the real polarisation ${P}_{\tau} \subseteq \mathcal{A}_{0}^{\mathbb{C}}$ (as a complex vector space with structure ${J}$) is given by the elements
\begin{equation}
\label{eq:frameP}
	X_{j} \coloneqq \frac{T_{j}}{\sqrt{2 \tau_{2}}} \dif \zeta_{\tau} \quad \text{for } j \in \Set{1, \dotsc, r} \, .
\end{equation}

Fix now $t = k + is$ with integer real part, thus selecting a K\"{a}hler structure $(\mathcal{A}_{0}^{\mathbb{C}},\omega_{t},I_{\tau,t})$ as in \S~\ref{sec:polarisations}. We then construct a real basis for $\mathcal{A}_{0}^{\mathbb{C}}$ by considering the vectors $X_{j}$ in~\eqref{eq:frameP} together with
\begin{equation}
\label{eq:frameall}
	X_{j + r} \coloneqq J X_{j} \quad \text{for } 1 \leq j \leq r , \qquad Y_{j} \coloneqq {I}_{\tau,t} X_{j} \quad \text{for } 1 \leq j \leq 2r \, .
\end{equation}
We will denote $(\bm{p} , \bm{q})$ the corresponding linear coordinates, with $\bm{p} = (p_{1} , \dotsc, p_{2r})$ and $\bm{q} = (q_{1} , \dotsc, q_{2r})$ corresponding to the $X_{j}$ and the $Y_{j}$'s, respectively.
We will call $\bm{z} = \bm{p} + i \bm{q}$ the corresponding $I_{t, \tau}$-holomorphic coordinates, and often write $A(\tau, \bm{p}, \bm{q})$ to denote the connection form corresponding to the parameters.

\begin{defn}
\label{def:trivial_covariant_derivative_tau}
	We denote by $\frac{\delta}{\delta \tau}$ the vector fields on $\mathcal{T}_{\Sigma} \times \mathcal{A}_{0}^{\mathbb{C}}$ given by 
	\begin{equation}
		\label{eq:deltaderivatives}
		\frac{\delta}{\delta \tau} = \frac{\partial}{\partial \tau} - \sum_{j = 1}^{2r} \left( \frac{\partial p_{j}}{\partial \tau} \frac{\partial}{\partial p_{j}} + \frac{\partial q_{j}}{\partial \tau} \frac{\partial}{\partial q_{j}} \right) =
		\frac{\partial}{\partial \tau} - \left( \sum_{j = 1}^{2r} \frac{\partial z_{j}}{\partial \tau} \frac{\partial}{\partial z_{j}} + \frac{\partial \overline{z}_{j}}{\partial \tau} \frac{\partial}{\partial \overline{z}_{j}} \right) \, .
	\end{equation}
\end{defn}

\begin{rem}
\label{rem:deltader}
	Note that differentiation along these vectors preserves the property of being polarised with respect to both polarisations, because for every $j \in \Set{1, \dotsc, 2r}$ they commute with $\frac{\partial}{\partial \overline{z}_{j}}$ and $\frac{\partial}{\partial p_{j}}$.
\end{rem}

\begin{defn}
	\label{def:operators}
	We define operators acting on smooth functions $\mathcal{A}_{0}^{\mathbb{C}} \to \mathbb{C}$:
	{\everymath={\displaystyle}
	\begin{equation}
		\begin{array}{ccc}
			M_{j} \psi \coloneqq (q_{j} + i q_{j+r}) \psi \, , 
			& \qquad &\mu_{j} f \coloneqq (z_{j} + i z_{j+r}) f \, ,
				\\[.5em]
			D_{j} \psi \coloneqq \frac{1}{\abs{t}} \left( \frac{\partial}{\partial q_{j}} + i \frac{\partial}{\partial q_{j+r}} \right) \psi \, ,
			& \qquad &
			\delta_{j} f \coloneqq \frac{2}{\abs{t}} \left( \frac{\partial}{\partial z_{j}} + i \frac{\partial}{\partial z_{j+r}} \right) f \, .
		\end{array}
	\end{equation}}
	
\end{defn}

The two operators in each column commute, and~\eqref{eq:commutation_bargmann} becomes
\begin{equation}
	\label{eq:transfoperators}
	\mathcal{B} \circ M_{j} = \frac{i}{2} (\delta_{j} - \mu_{j}) \circ \mathcal{B}
	\qquad \text{and} \qquad
	\mathcal{B} \circ D_{j} = \frac{i}{2} (\mu_{j} + \delta_{j}) \circ \mathcal{B} .
\end{equation}

The transition between the two coordinate systems $(\bm{p},\bm{q})$ and $(\bm{u},\bm{v})$ can be obtained from the identity $\dif \zeta_{\tau} = \dif x + \tau \dif y$, and from~\eqref{eq:frameP} and~\eqref{eq:frameall}.
In what follows we will only need the relations
\begin{equation}
	\label{eq:coordinates}
	\begin{gathered}
		q_{j} = \frac{1}{\lvert t \rvert  \sqrt{2 \tau_{2}}} \bigl( - (k \tau_{1} - s \tau_{2}) u_{j} + (k \tau_{2} + s \tau_{1}) v_{j} + k u_{j+r} - s v_{j+r} \bigr) \, , \\
		q_{j+r} = \frac{1}{\lvert t \rvert  \sqrt{2 \tau_{2}}} \bigl( (k \tau_{2} + s \tau_{1}) u_{j} + (k \tau_{1} - s \tau_{2}) v_{j} - s u_{j+r} - k v_{j+r} \bigr) \, ,
	\end{gathered}
\end{equation}
and the inverse relations show that $\mathcal{A}_{0} \subseteq \mathcal{A}_{0}^{\mathbb{C}}$ is expressed in coordinates $(\bm{p}, \bm{q})$ by
\begin{equation}
	\label{eq:A0pq}
	p_{j} =  \frac{s}{\abs{t}} q_{j} + \frac{k}{\abs{t}} q_{j+r} \, , 
	\qquad
	p_{j+r} = - \frac{k}{\abs{t}} q_{j} + \frac{s}{\abs{t}} q_{j+r} \, .
\end{equation}

We are in the situation of Rem.~\ref{rem:general_linear_space}.
Our setting corresponds to the symplectic form $\omega_{t} /\abs{t}$ for the quantum parameter $\hslash = 1/\abs{t}$; we then have frames $\sigma_{\tau}$ and $\rho_{\tau}$ as well as a Bargmann transform $\mathcal{B}_{\tau}$ for each $\tau$.

\subsection{Variations over Teichm\"{u}ller space}

Differentiating~\eqref{eq:coordinates} yields
\begin{equation}
	\label{eq:varcoord}
	\begin{gathered}
		\frac{\partial q_j}{\partial \tau} = 
			- \frac{1}{4 \tau_{2}} q_{j+r} - \frac{t}{4 \tau_{2} \lvert t \rvert } (p_{j} + i p_{j+r}) \, , \qquad
		\frac{\partial q_{j+r}}{\partial \tau} =
			\frac{1}{4 \tau_{2}} q_{j} - \frac{it}{4 \tau_{2} \lvert t \rvert } (p_{j} + i p_{j+r})
	\end{gathered}
\end{equation}
and similarly for the complex coordinates and variations in $\overline{\tau}$.

\begin{defn}
	\label{def:mathcalXj}
	For $j \in \Set{1, \dotsc, r}$ we set
	\begin{equation}
		\mathcal{X}_{j} \coloneqq \frac{1}{\sqrt{2 \tau_{2}}} \left( \frac{\partial}{\partial u_{j}} + \tau \frac{\partial}{\partial u_{j+r}} \right) \in \mathcal{A}_{0} \otimes_{\mathbb{R}} \mathbb{C} \subseteq \mathcal{A}_{0}^{\mathbb{C}} \otimes_{\mathbb{R}} \mathbb{C} \, .
	\end{equation}
\end{defn}

\begin{rem*}
	The above are defined formally in the same way as the vectors $X_{j}$ (cf.~\eqref{eq:frameP}), except they are thought of as \emph{complex} objects tangent to $\mathcal{A}_{0}$ rather than \emph{real} objects tangent to $\mathcal{A}_{0}^{\mathbb{C}}$.
	In particular they are anti-holomorphic and $I_{\tau} \mathcal{X}_{j} = - \mathcal{X}_{j}$.
\end{rem*}

\begin{lem}
\label{lem:variation_inverse_metric}

If $\widetilde{g}_{\tau}$ denotes the inverse of $g_{\tau}$, then
\begin{equation}
	\widetilde{G} \left( \frac{\partial}{\partial \tau} \right) = - \frac{ \partial \widetilde{g}_{\tau}}{\partial \tau} = - \frac{i}{\tau_{2}} \sum_{j = 1}^{r} \overline{\mathcal{X}}_{j} \otimes \overline{\mathcal{X}}_{j} \, ,
		\qquad
	\widetilde{G} \left( \frac{\partial}{\partial \overline{\tau}} \right) = - \frac{ \partial \widetilde{g}_{\tau}}{\partial \overline{\tau}} = \frac{i}{\tau_{2}} \sum_{j = 1}^{r} \mathcal{X}_{j} \otimes \mathcal{X}_{j} \, .
\end{equation}
\end{lem}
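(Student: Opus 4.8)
The plan is to reduce everything to an explicit computation in the global linear coordinates $(u_1,\dots,u_{2r})$ on $\mathcal{A}_0$ fixed in \S~\ref{sec:coordinates_and_frames}. The first equality $\widetilde{G}(\partial/\partial\tau)=-\partial\widetilde{g}_{\tau}/\partial\tau$ is formal and mirrors the analogous remark for the complexified tensor in \S~\ref{sec:complexified_hitchin_connection}: since $\omega$ does not depend on $\tau\in\mathcal{T}$, differentiating along a vector field $V$ on $\mathcal{T}$ the relation tying together inverse metric, symplectic form and complex structure turns $V[I_{\tau}]$ into $V[\widetilde{g}_{\tau}]\cdot\omega$, so that $\widetilde{G}(V)=-V[\widetilde{g}_{\tau}]$ up to the sign fixed by the normalisation of $\widetilde{G}$. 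Hence the content lies entirely in the second equality, which I would prove by writing $\widetilde{g}_{\tau}$ explicitly and differentiating.

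First I would record the complex structure $I_{\tau}$ on $\mathcal{A}_0$. By Rem.~\ref{rem:riemannian_structure} it is represented by the constant Hodge operator, so under $\partial_{u_j}\leftrightarrow \dif x\otimes T_j$ and $\partial_{u_{j+r}}\leftrightarrow \dif y\otimes T_j$ the formulas for $*\dif x$ and $*\dif y$ give
\[
    I_{\tau}\partial_{u_j}=\frac{1}{\tau_2}\bigl(\tau_1\partial_{u_j}+\abs{\tau}^2\partial_{u_{j+r}}\bigr),\qquad I_{\tau}\partial_{u_{j+r}}=-\frac{1}{\tau_2}\bigl(\partial_{u_j}+\tau_1\partial_{u_{j+r}}\bigr).
\]
Then $g_{\tau}=\omega\cdot I_{\tau}$ with $\omega=\sum_{j=1}^r\dif u_j\wedge\dif u_{j+r}$; since all these tensors are parallel (again Rem.~\ref{rem:riemannian_structure}), inverting the resulting constant matrix block by block produces the inverse-metric bivector
\[
    \widetilde{g}_{\tau}=\frac{1}{\tau_2}\sum_{j=1}^r\Bigl(\partial_{u_j}\otimes\partial_{u_j}+\tau_1\bigl(\partial_{u_j}\otimes\partial_{u_{j+r}}+\partial_{u_{j+r}}\otimes\partial_{u_j}\bigr)+\abs{\tau}^2\,\partial_{u_{j+r}}\otimes\partial_{u_{j+r}}\Bigr).
\]

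Next I would differentiate the three coefficients, using $\partial\tau_1/\partial\tau=1/2$, $\partial\tau_2/\partial\tau=-i/2$ and $\partial\abs{\tau}^2/\partial\tau=\overline{\tau}$. A short computation collapses the result into a perfect square,
\[
    \frac{\partial\widetilde{g}_{\tau}}{\partial\tau}=\frac{i}{2\tau_2^2}\sum_{j=1}^r\bigl(\partial_{u_j}+\overline{\tau}\,\partial_{u_{j+r}}\bigr)\otimes\bigl(\partial_{u_j}+\overline{\tau}\,\partial_{u_{j+r}}\bigr),
\]
which is exactly what one expects, since differentiating along the pure direction $\partial/\partial\tau$ swaps types and lands in $\operatorname{T}_{1,0}\otimes\operatorname{T}_{1,0}$. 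As $\overline{\mathcal{X}}_j=\tfrac{1}{\sqrt{2\tau_2}}\bigl(\partial_{u_j}+\overline{\tau}\,\partial_{u_{j+r}}\bigr)$ by Def.~\ref{def:mathcalXj}, each summand equals $2\tau_2\,\overline{\mathcal{X}}_j\otimes\overline{\mathcal{X}}_j$, the prefactor becomes $i/\tau_2$, and negating yields $\widetilde{G}(\partial/\partial\tau)=-\tfrac{i}{\tau_2}\sum_j\overline{\mathcal{X}}_j\otimes\overline{\mathcal{X}}_j$. The $\partial/\partial\overline{\tau}$ identity then follows by complex conjugation, replacing $\overline{\mathcal{X}}_j$ with $\mathcal{X}_j$ and reversing the overall sign.

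I expect no conceptual obstacle here; the delicate points are purely bookkeeping --- correctly carrying the normalisation $1/\sqrt{2\tau_2}$ in $\mathcal{X}_j$, which supplies the factor $2\tau_2$ turning $i/(2\tau_2^2)$ into $i/\tau_2$, and pinning down the sign in the definition of $\widetilde{G}$. A convenient shortcut, which I would use as a cross-check, is the identity $\widetilde{g}_{\tau}=I_{\tau}\,\Omega$ with $\Omega$ the constant symplectic matrix: it gives $\partial\widetilde{g}_{\tau}/\partial\tau=(\partial I_{\tau}/\partial\tau)\,\Omega$, and the rank-one structure of $\partial I_{\tau}/\partial\tau$ makes the perfect-square form immediate.
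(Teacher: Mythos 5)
Your computation is correct, but it takes a genuinely different route from the paper. The paper's proof is a two-line reduction: it observes that $\mathcal{A}_{0}$ splits as an orthogonal direct sum of $r$ copies of the rank-one case and then cites the explicit $K=\SU(2)$ computation from the earlier references (Andersen--Malus\`a and the second author's thesis). You instead give a self-contained derivation: identifying $I_{\tau}$ with the Hodge star in the $\bm{u}$-coordinates, inverting the constant $2\times 2$ blocks of $g_{\tau}=\omega\cdot I_{\tau}$ to get $\widetilde{g}_{\tau}=\tfrac{1}{\tau_{2}}\sum_{j}\bigl(\partial_{u_{j}}^{\otimes 2}+\tau_{1}(\partial_{u_{j}}\otimes\partial_{u_{j+r}}+\partial_{u_{j+r}}\otimes\partial_{u_{j}})+\abs{\tau}^{2}\partial_{u_{j+r}}^{\otimes 2}\bigr)$, and differentiating to obtain the perfect square $\tfrac{i}{2\tau_{2}^{2}}\sum_{j}(\partial_{u_{j}}+\overline{\tau}\partial_{u_{j+r}})^{\otimes 2}=\tfrac{i}{\tau_{2}}\sum_{j}\overline{\mathcal{X}}_{j}\otimes\overline{\mathcal{X}}_{j}$; I checked the coefficient derivatives and the normalisation $2\tau_{2}$ coming from Def.~\ref{def:mathcalXj}, and they are right, as is the formal step $\widetilde{G}(V)=-V[\widetilde{g}_{\tau}]$ (which mirrors the remark in \S~\ref{sec:complexified_hitchin_connection} and is also part of the claim). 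Your block-by-block computation is in effect the same rank-one calculation the paper outsources, performed simultaneously in all $r$ summands; what it buys is that the lemma becomes independent of the external references and the sign/normalisation conventions are verified in situ, at the cost of a page of bookkeeping the paper avoids.
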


\begin{proof}
    This is proven in~\cite{andersen_malusa_2017_the_aj_conjecture_for_the_teichmueller_tqft,malusa_2018_geometric_quantisation_the_hitchin_witten_connection_and_quantum_operators_in_complex_chern_simons_theory} for $K = \SU(2)$. The general case follows, since $\mathcal{A}_{0}$ can be decomposed as an orthogonal direct sum of $r$ copies of the rank-one case.
\end{proof} 

\begin{cor}
	\label{cor:varg}
	The derivatives of $g_{\tau}$ along $\tau$ and $\overline{\tau}$ read
	\begin{equation}
		\begin{gathered}
			\frac{\partial g_{\tau}}{\partial \tau} (A,B) = - \frac{i}{\tau_{2}} \sum_{j = 1}^{r} g \left( \overline{\mathcal{X}}_{j} , A \right) g \left( \overline{\mathcal{X}}_{j} , B \right) \, , \\
			\frac{\partial g_{\tau}}{\partial \overline{\tau}} (A,B) = \frac{i}{\tau_{2}} \sum_{j = 1}^{r} g \left( {\mathcal{X}}_{j} , A \right) g \left( {\mathcal{X}}_{j} , B \right) \, .
		\end{gathered}
	\end{equation}
\end{cor}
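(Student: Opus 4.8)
The plan is to deduce the corollary directly from Lemma~\ref{lem:variation_inverse_metric}, using only the elementary relation between the variation of a nondegenerate symmetric tensor and that of its inverse; essentially no new geometric input is required, and the pairing $g_{\tau}$ throughout is understood as its $\mathbb{C}$-bilinear extension to $\mathcal{A}_{0} \otimes_{\mathbb{R}} \mathbb{C}$.

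First I would record the pointwise algebraic identity underlying the computation. Writing $g_{\tau}$ and its inverse $\widetilde{g}_{\tau}$ in a local frame, with components $(g_{\tau})_{ab}$ and $(\widetilde{g}_{\tau})^{ab}$, the defining relation is $(g_{\tau})_{ac}(\widetilde{g}_{\tau})^{cb} = \delta_{a}^{b}$. Differentiating along $\tau$ and contracting with a further copy of $g_{\tau}$ (the standard "variation of the inverse" manipulation) gives
\begin{equation}
	\frac{\partial (g_{\tau})_{ad}}{\partial \tau} = - (g_{\tau})_{ac} \, \frac{\partial (\widetilde{g}_{\tau})^{cb}}{\partial \tau} \, (g_{\tau})_{bd} \, ,
\end{equation}
and identically with $\tau$ replaced by $\overline{\tau}$. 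Intrinsically this says that $\partial_{\tau} g_{\tau}$ is obtained from the bivector $\partial_{\tau}\widetilde{g}_{\tau}$ by lowering both indices with $g_{\tau}$; since $g_{\tau}$ is symmetric, the result is again a symmetric $2$-tensor, consistent with the right-hand sides of the corollary.

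Next I would substitute the expressions of Lemma~\ref{lem:variation_inverse_metric}. From $\widetilde{G}(\partial/\partial\tau) = -\partial_{\tau}\widetilde{g}_{\tau} = -\frac{i}{\tau_{2}}\sum_{j}\overline{\mathcal{X}}_{j}\otimes\overline{\mathcal{X}}_{j}$ one reads off $\partial_{\tau}\widetilde{g}_{\tau} = \frac{i}{\tau_{2}}\sum_{j}\overline{\mathcal{X}}_{j}\otimes\overline{\mathcal{X}}_{j}$; lowering the two indices with $g_{\tau}$ turns each factor $\overline{\mathcal{X}}_{j}$ into the one-form $g_{\tau}(\overline{\mathcal{X}}_{j},\cdot)$, so that evaluation on tangent vectors $A,B$ and the symmetry of $g_{\tau}$ give
\begin{equation}
	\frac{\partial g_{\tau}}{\partial \tau}(A,B) = - \frac{i}{\tau_{2}} \sum_{j = 1}^{r} g_{\tau}\bigl(\overline{\mathcal{X}}_{j},A\bigr) \, g_{\tau}\bigl(\overline{\mathcal{X}}_{j},B\bigr) \, .
\end{equation}
The computation for $\overline{\tau}$ is verbatim, starting from $\partial_{\overline{\tau}}\widetilde{g}_{\tau} = -\frac{i}{\tau_{2}}\sum_{j}\mathcal{X}_{j}\otimes\mathcal{X}_{j}$, the extra sign cancelling against the minus in the contraction identity to produce the claimed $+\frac{i}{\tau_{2}}$ prefactor.

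There is no serious obstacle: the only delicate points are bookkeeping, namely keeping the index contraction (equivalently the musical lowering by $g_{\tau}$) correct and tracking the two signs, so that the $\tau$- and $\overline{\tau}$-formulae emerge with opposite signs as stated. I would also note that although the right-hand sides depend on $\tau$ through both $g_{\tau}$ and the frame $\mathcal{X}_{j}$, the contraction identity above is purely pointwise in the Teichm\"{u}ller parameter, so the $\tau$-dependence of $\mathcal{X}_{j}$ itself never enters the argument.
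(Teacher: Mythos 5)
Your proposal is correct and follows exactly the paper's own argument: the "derivative of the inverse" identity $\partial_{\tau} g_{\tau} = -g_{\tau}\cdot\partial_{\tau}\widetilde{g}_{\tau}\cdot g_{\tau} = g_{\tau}\cdot\widetilde{G}(\partial/\partial\tau)\cdot g_{\tau}$, combined with Lemma~\ref{lem:variation_inverse_metric} and lowering of indices. The sign bookkeeping in both the $\tau$ and $\overline{\tau}$ cases matches the stated formulae, so nothing is missing.
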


\begin{proof}
	By the usual formula for the derivative of the inverse matrix, we have that
	\begin{equation}
		\frac{\partial g_{\tau}}{\partial \tau} = - g_{\tau} \cdot \frac{\partial \widetilde{g}_{\tau}}{\partial \tau} \cdot g_{\tau} = g_{\tau} \cdot \widetilde{G} \left( \frac{\partial}{\partial \tau} \right) \cdot g_{\tau} \, ,
	\end{equation}
	and the result follows.
	The derivative in $\overline{\tau}$ is obtained the same way.
\end{proof}

From the formula, combined with the fact that the Levi-Civita connection of $g_{\tau}$ is trivial, one deduces the following.

\begin{cor}
	The covariant derivative with respect to the Hitchin--Witten connection is given by
	\begin{equation}
		\label{eq:explicitHWC}
		\nabla^{\HW}_{\tau} = \frac{\partial}{\partial \tau} - \frac{i}{2 t \tau_{2}} \sum_{j = 1}^{r} \nabla_{\overline{\mathcal{X}}_{j}} \nabla_{\overline{\mathcal{X}}_{j}} \, ,
		\qquad \text{and} \qquad
		\nabla^{\HW}_{\overline{\tau}} = \frac{\partial}{\partial \overline{\tau}} - \frac{i}{2 \overline{t} \tau_{2}} \sum_{j = 1}^{r} \nabla_{\mathcal{X}_{j}} \nabla_{\mathcal{X}_{j}} \, .
	\end{equation}
\end{cor}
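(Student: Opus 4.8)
The plan is to substitute the explicit bi-vectors of Lemma~\ref{lem:variation_inverse_metric} into the defining expression $\nabla^{\HW} = \nabla^{\Tr} - \frac{1}{2} u^{\HW}$ and read off the two covariant derivatives directly. First I would recall that the trace connection $\nabla^{\Tr}$ is by construction the trivial connection in the Teichm\"{u}ller directions on the (trivialised) bundle of families of sections, so that $\nabla^{\Tr}_{\tau}$ contributes exactly the summand $\frac{\partial}{\partial \tau}$, and likewise $\nabla^{\Tr}_{\overline{\tau}}$ contributes $\frac{\partial}{\partial \overline{\tau}}$. It then only remains to evaluate $u^{\HW}\bigl( \frac{\partial}{\partial \tau} \bigr)$ and $u^{\HW}\bigl( \frac{\partial}{\partial \overline{\tau}} \bigr)$.

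The key observation is that the tensors of Lemma~\ref{lem:variation_inverse_metric} are of \emph{pure} bidegree for the complex structure $I_{\tau}$ on $\mathcal{M}_{\fl}$. Indeed, by the Remark following Def.~\ref{def:mathcalXj} the field $\mathcal{X}_{j}$ is anti-holomorphic, so $\overline{\mathcal{X}}_{j}$ is holomorphic; hence $\overline{\mathcal{X}}_{j} \otimes \overline{\mathcal{X}}_{j}$ takes values in $\operatorname{T}_{1,0} \otimes \operatorname{T}_{1,0}$ while $\mathcal{X}_{j} \otimes \mathcal{X}_{j}$ takes values in $\operatorname{T}_{0,1} \otimes \operatorname{T}_{0,1}$. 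Comparing with the splitting $\widetilde{G}(V) = G(V) + \overline{G}(V)$ into its $(1,0)\otimes(1,0)$ and $(0,1)\otimes(0,1)$ parts, this uniqueness of bidegree forces
\begin{equation}
    G\Bigl( \tfrac{\partial}{\partial \tau} \Bigr) = - \frac{i}{\tau_{2}} \sum_{j=1}^{r} \overline{\mathcal{X}}_{j} \otimes \overline{\mathcal{X}}_{j} \, , \quad \overline{G}\Bigl( \tfrac{\partial}{\partial \tau} \Bigr) = 0 \, , \quad G\Bigl( \tfrac{\partial}{\partial \overline{\tau}} \Bigr) = 0 \, , \quad \overline{G}\Bigl( \tfrac{\partial}{\partial \overline{\tau}} \Bigr) = \frac{i}{\tau_{2}} \sum_{j=1}^{r} \mathcal{X}_{j} \otimes \mathcal{X}_{j} \, .
\end{equation}
Thus for each Teichm\"{u}ller direction exactly one of the two Laplacians in $u^{\HW}$ survives, giving $u^{\HW}\bigl( \frac{\partial}{\partial \tau} \bigr) = - \frac{1}{t} \Delta_{G(\partial / \partial \tau)}$ and $u^{\HW}\bigl( \frac{\partial}{\partial \overline{\tau}} \bigr) = \frac{1}{\overline{t}} \Delta_{\overline{G}(\partial / \partial \overline{\tau})}$.

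Next I would unwind the Laplacians. Because the Levi--Civita connection of $g_{\tau}$ is trivial (Rem.~\ref{rem:riemannian_structure}) and the tensors $G$, $\overline{G}$ are parallel (translation-invariant), the symbolic trace $\Tr\bigl( \nabla^{1,0} G(V) \nabla^{1,0} \bigr)$ carries no Christoffel corrections and collapses to the plain iterated directional derivative along the two legs of the contracted bi-vector. Concretely this yields $\Delta_{G(\partial / \partial \tau)} = - \frac{i}{\tau_{2}} \sum_{j} \nabla_{\overline{\mathcal{X}}_{j}} \nabla_{\overline{\mathcal{X}}_{j}}$ and $\Delta_{\overline{G}(\partial / \partial \overline{\tau})} = \frac{i}{\tau_{2}} \sum_{j} \nabla_{\mathcal{X}_{j}} \nabla_{\mathcal{X}_{j}}$. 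Substituting these into the two expressions for $u^{\HW}$, and then into $\nabla^{\HW} = \nabla^{\Tr} - \frac{1}{2} u^{\HW}$, produces the two displayed formul\ae{} after collecting constants (the signs cancel to leave $-\frac{i}{2 t \tau_{2}}$ and $-\frac{i}{2 \overline{t} \tau_{2}}$ respectively).

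The computation is essentially bookkeeping; the only point requiring care---and the main potential obstacle---is the justification that the symbolic Laplacian $\Tr\bigl( \nabla^{1,0} G(V) \nabla^{1,0} \bigr)$ equals the literal iterated covariant derivative $\sum_{j} \nabla_{\overline{\mathcal{X}}_{j}} \nabla_{\overline{\mathcal{X}}_{j}}$ with no ordering ambiguity or residual curvature term. This rests squarely on the flatness of the Levi--Civita connection together with the parallelism of $G(V)$, which together let one pull the symmetric tensor freely through the two covariant derivatives. I would spell out this single step explicitly and treat the remaining substitutions as routine.
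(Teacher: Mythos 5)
Your proposal is correct and follows exactly the route the paper intends: the paper's own justification is the single remark that the formula follows from the computed variation of the inverse metric together with the triviality of the Levi--Civita connection, and your argument simply fills in the implicit steps (purity of bidegree of $\widetilde{G}(\partial/\partial\tau)$ and $\widetilde{G}(\partial/\partial\overline{\tau})$ so that only one Laplacian survives in each direction, collapse of the symbolic trace to iterated directional derivatives, and the bookkeeping of constants). The constants check out, so nothing further is needed.
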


\begin{lem}
	\label{lem:G}
	The symmetric tensor $G^{\mathbb{C}}$ is determined by the identities
	\begin{equation}
		\begin{gathered}
			G^{\mathbb{C}} \left(\frac{\partial}{\partial \tau} \right) = - \frac{i t}{\tau_{2} \abs{t}} \sum_{j = 1}^{r} \left( \frac{\partial}{\partial z_{j}} + i \frac{\partial}{\partial z_{j+r}} \right) \otimes \left( \frac{\partial}{\partial z_{j}} + i \frac{\partial}{\partial z_{j+r}} \right) \, , \\
			G^{\mathbb{C}} \left(\frac{\partial}{\partial \overline{\tau}} \right) = - \frac{i \overline{t}}{\tau_{2} \abs{t}} \sum_{j = 1}^{r} \left( \frac{\partial}{\partial z_{j}} - i \frac{\partial}{\partial z_{j+r}} \right) \otimes \left( \frac{\partial}{\partial z_{j}} - i \frac{\partial}{\partial z_{j+r}} \right) \, .
		\end{gathered}
	\end{equation}
\end{lem}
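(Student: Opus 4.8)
The plan is to compute $G^{\mathbb{C}}(V)$ directly as the $\T_{1,0}\otimes\T_{1,0}$-component of $\widetilde{G}^{\mathbb{C}}(V)=-V\bigl[\widetilde{g}^{\mathbb{C}}_{\tau}\bigr]$, exploiting that every tensor involved is translation-invariant, hence determined by constant coefficients on $\mathcal{A}_{0}^{\mathbb{C}}\otimes_{\mathbb{R}}\mathbb{C}$. Exactly as in the proof of Lemma~\ref{lem:variation_inverse_metric}, the space $\mathcal{A}_{0}^{\mathbb{C}}$ splits as a $\widetilde{g}^{\mathbb{C}}_{\tau}$-orthogonal sum of $r$ rank-one blocks indexed by the basis $(T_{1},\dotsc,T_{r})$ of $\mathfrak{t}$, and $G^{\mathbb{C}}(V)$ respects this decomposition; so it suffices to treat the case $r=1$ and then sum over the $r$ copies.

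First I would record how $I_{\tau,t}$ varies. Working in the $\tau$-independent frame $\partial/\partial u_{j},\partial/\partial v_{j}$ coming from the coordinates $(\bm{u},\bm{v})$, I differentiate the identity $I_{\tau,t}\,\partial_{\overline{z}_{j}}=-i\,\partial_{\overline{z}_{j}}$ along $V$. Writing $\pi_{\T_{1,0}}=\frac12(\mathbb{1}-iI_{\tau,t})$ this yields $V\bigl[I_{\tau,t}\bigr]\,\partial_{\overline{z}_{j}}=-(i+I_{\tau,t})\,V\bigl[\partial_{\overline{z}_{j}}\bigr]=-2i\,\pi_{\T_{1,0}}\bigl(V\bigl[\partial_{\overline{z}_{j}}\bigr]\bigr)$, which confirms that $V\bigl[I_{\tau,t}\bigr]$ swaps $\T_{1,0}$ and $\T_{0,1}$ and reduces everything to the frame variation $V\bigl[\partial_{\overline{z}_{j}}\bigr]$. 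The latter is read off from~\eqref{eq:varcoord} and its $z$-analogue, which express the variation of $z_{j}=p_{j}+iq_{j}$ relative to the flat coordinates in terms of the $(\bm{p},\bm{q})$ themselves, carrying the characteristic phase $t/\abs{t}$ introduced by $\omega_{t}=\mathbb{R}e(t\omega^{\mathbb{C}})$.

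Next I would feed this into the defining relation $\widetilde{G}^{\mathbb{C}}(V)\cdot\omega_{t}=\abs{t}\,V\bigl[I_{\tau,t}\bigr]$. Since $\omega_{t}$ is of type $(1,1)$ and $G^{\mathbb{C}}(V)$ of type $(2,0)$, the contraction $G^{\mathbb{C}}(V)\cdot\omega_{t}$ is precisely the endomorphism block sending $\T_{0,1}\to\T_{1,0}$; matching it with the corresponding block of $\abs{t}\,V\bigl[I_{\tau,t}\bigr]$ computed above, and inverting the constant $(1,1)$-form $\omega_{t}$, determines the coefficients of $G^{\mathbb{C}}(V)$ uniquely. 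For $V=\partial/\partial\tau$ this produces the combination $\bigl(\partial_{z_{j}}+i\partial_{z_{j+r}}\bigr)\otimes\bigl(\partial_{z_{j}}+i\partial_{z_{j+r}}\bigr)$ with prefactor $-it/(\tau_{2}\abs{t})$; the case $V=\partial/\partial\overline{\tau}$ is entirely analogous, using the $\overline{\tau}$-variations. Equivalently, one may just verify the claimed tensor: it manifestly lies in $\T_{1,0}\otimes\T_{1,0}$ and is symmetric, so by uniqueness of the holomorphic part it suffices to check that its contraction with $\omega_{t}$ reproduces the $\T_{0,1}\to\T_{1,0}$ block of $\abs{t}\,V\bigl[I_{\tau,t}\bigr]$.

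The main obstacle is bookkeeping rather than conceptual: correctly tracking the holomorphic projection $\pi_{\T_{1,0}}$ and the $t/\abs{t}$ phases through the variations~\eqref{eq:varcoord}, and remembering that the $z_{j}$ are themselves $\tau$-dependent, so the differentiation must be carried out in the flat $(\bm{u},\bm{v})$-frame---equivalently, along the vector fields $\delta/\delta\tau$ of Definition~\ref{def:trivial_covariant_derivative_tau} corrected by the frame variation. Once the $r=1$ block is settled, assembling the $r$ orthogonal copies is immediate.
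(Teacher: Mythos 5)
Your plan is correct and would prove the lemma, but it takes a genuinely different route from the paper's. You work straight from the defining relation $\widetilde{G}^{\mathbb{C}}(V)\cdot\omega_{t}=\abs{t}\,V\bigl[I_{\tau,t}\bigr]$: you compute $V\bigl[I_{\tau,t}\bigr]$ from the variation of the frame (your identity $V\bigl[I_{\tau,t}\bigr]\partial_{\overline{z}_{j}}=-\bigl(i+I_{\tau,t}\bigr)V\bigl[\partial_{\overline{z}_{j}}\bigr]=-2i\,\pi_{\T_{1,0}}\bigl(V\bigl[\partial_{\overline{z}_{j}}\bigr]\bigr)$ is right, and correctly isolates the $\T_{0,1}\to\T_{1,0}$ block that matches $G^{\mathbb{C}}(V)\cdot\omega_{t}$), and then invert the constant $(1,1)$-form $\omega_{t}$; your block decomposition is the one into $r$ rank-one pieces over the Cartan basis. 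The paper never touches $V\bigl[I_{\tau,t}\bigr]$ or inverts $\omega_{t}$: it uses instead the $\tau$-independent orthogonal splitting $\mathcal{A}_{0}^{\mathbb{C}}=\mathcal{A}_{0}\oplus{J}\mathcal{A}_{0}$ to write $\widetilde{g}^{\mathbb{C}}_{\tau}=\widetilde{g}_{\tau}\oplus\bigl({J}\cdot\widetilde{g}_{\tau}\cdot{J}\bigr)$, so that Lemma~\ref{lem:variation_inverse_metric} immediately yields $\frac{\partial\widetilde{g}^{\mathbb{C}}_{\tau}}{\partial\tau}=\frac{i}{\tau_{2}}\sum_{j}\bigl(\overline{\mathcal{X}}_{j}^{\otimes2}+({J}\overline{\mathcal{X}}_{j})^{\otimes2}\bigr)$, and then extracts the $(1,0)\otimes(1,0)$-part by tabulating the pairings $\dif z_{j}\bigl(\overline{\mathcal{X}}_{l}\bigr)$ and $\dif z_{j}\bigl({J}\overline{\mathcal{X}}_{l}\bigr)$; the phase $t/\abs{t}$ then appears from $\tfrac{1}{4}\bigl(1+t/\abs{t}\bigr)^{2}-\tfrac{1}{4}\bigl(1-t/\abs{t}\bigr)^{2}=t/\abs{t}$. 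What the paper's route buys is direct reuse of the already-established variation of $\widetilde{g}_{\tau}$ and no need to compute $V\bigl[I_{\tau,t}\bigr]$ or invert $\omega_{t}$; what yours buys is independence from Lemma~\ref{lem:variation_inverse_metric}, at the cost of the frame-variation computation, which is of essentially the same length and carries the same $t/\abs{t}$ bookkeeping you rightly flag as the only real hazard. Your fallback of simply contracting the claimed tensor with $\omega_{t}$ and comparing with the $\T_{0,1}\to\T_{1,0}$ block of $\abs{t}\,V\bigl[I_{\tau,t}\bigr]$ is also a legitimate, if less illuminating, verification.
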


\begin{proof}
	Since the decomposition $\mathcal{A}_{0}^{\mathbb{C}} = \mathcal{A}_{0} \oplus {J} \mathcal{A}_{0}$ is orthogonal, and since ${J}$ is an isometry, the metric $g^{\mathbb{C}}_{\tau}$ splits as the sum of two blocks $g_{\tau}$ and ${J}^{*} g_{\tau} = {J} \cdot g_{\tau} \cdot {J}$.
	Correspondingly, its inverse also splits as the sum
	\begin{equation}
		\widetilde{g}_\tau^{\mathbb{C}} = \widetilde{g}_{\tau} \oplus \bigl( {J}^{-1} \cdot \widetilde{g}_{\tau} \cdot {J}^{-1} \bigr) = \widetilde{g}_{\tau} \oplus \bigl( {J} \cdot \widetilde{g}_{\tau} \cdot {J} \bigr) \, .
	\end{equation}
	Since both ${J}$ and the splitting of $\mathcal{A}_{0}^{\mathbb{C}}$ are independent of the Teichmüller parameter, the derivatives of $\widetilde{g}_{\tau}$ with respect to $\tau$ and $\overline{\tau}$ also decompose in a similar way, whence
	\begin{equation}
		\begin{gathered}
			\frac{\partial \widetilde{g}^{\mathbb{C}}_{\tau}}{\partial \tau} = \frac{i}{\tau_{2}} \sum_{j = 1}^{r} \left( \overline{\mathcal{X}}_{j} \otimes \overline{\mathcal{X}}_{j} + {J} \overline{\mathcal{X}}_{j} \otimes {J} \overline{\mathcal{X}}_{j} \right)	\, , \qquad
			\frac{\partial \widetilde{g}^{\mathbb{C}}_{\tau}}{\partial \overline{\tau}} = - \frac{i}{\tau_{2}} \sum_{j = 1}^{r} \left( {\mathcal{X}}_{j} \otimes {\mathcal{X}}_{j} + {J} {\mathcal{X}}_{j} \otimes {J} {\mathcal{X}}_{j} \right) \, .
		\end{gathered}
	\end{equation}
	Moreover a direct computation shows that 
	{\everymath={\displaystyle}
	\begin{equation}
		\begin{array}{ccc}
			\dif z_{j} \bigl(\overline{\mathcal{X}}_{l}\bigr) = \frac{\delta_{jl}}{2} \left( 1 + \frac{t}{\abs{t}} \right) ,
			& \qquad &
			\dif z_{j+r} \bigl(\overline{\mathcal{X}}_{l}\bigr) = \frac{i \delta_{jl}}{2} \left( 1 + \frac{t}{\abs{t}} \right) , \\
			\dif z_{j} \bigl({J} \overline{\mathcal{X}}_{l}\bigr) = - \frac{i \delta_{jl}}{2} \left( 1 - \frac{t}{\abs{t}} \right) ,
			& \qquad &
			\dif z_{j+r} \bigl({J} \overline{\mathcal{X}}_{l}\bigr) = \frac{1 \delta_{jl}}{2} \left( 1 - \frac{t}{\abs{t}} \right) \, . 
		\end{array}
	\end{equation}}
	Therefore the components of $\overline{\mathcal{X}}_{j}$ and ${J}\overline{\mathcal{X}}_{j}$ of type $(1,0)$ with respect to ${I}_{\tau,t}$ are
	\begin{equation}
		\begin{gathered}
			\overline{\mathcal{X}}_{j} ' = \frac{1}{2} \left( 1 + \frac{t}{\abs{t}} \right) \left( \frac{\partial}{\partial z_{j}} + i \frac{\partial}{\partial z_{j+r}} \right) ,	\qquad
			({J} \overline{\mathcal{X}}_{j} )' = - \frac{i}{2} \left( 1 - \frac{t}{\abs{t}} \right) \left( \frac{\partial}{\partial z_{j}} + i \frac{\partial}{\partial z_{j+r}} \right) ,
		\end{gathered}
	\end{equation}
	respectively. 
	We conclude that
	\begin{equation}
		\overline{\mathcal{X}}_{j} \otimes \overline{\mathcal{X}}_{j} + {J} \overline{\mathcal{X}}_{j} \otimes {J} \overline{\mathcal{X}}_{j} 
		= \frac{t}{\abs{t}} \left( \frac{\partial}{\partial z_{j}} + i \frac{\partial}{\partial z_{j+r}} \right)^{\otimes 2} \, ,
	\end{equation}
	and the first identity in the statement is proven. The second is analogous.
\end{proof}

\begin{cor}
	\label{lem:CHCexplicit}
	The covariant derivative with respect to the complexified Hitchin connection is given by
	\begin{equation}
		\begin{gathered}
			\nabla^{\mathbb{C}}_{\tau} =
			\frac{\partial}{\partial \tau} - \frac{i}{4 \tau_{2} \overline{t}} \sum_{j = 1}^{r} \left( \nabla_{z_{j}} + i \nabla_{z_{j+r}} \right)^{2} \, , \qquad
			\nabla^{\mathbb{C}}_{\overline{\tau}} = \frac{\partial}{\partial \overline{\tau}} - \frac{i}{4 \tau_{2} t} \sum_{j = 1}^{r} \left( \nabla_{z_{j}} - i \nabla_{z_{j+r}} \right)^{2} \, .
		\end{gathered}
	\end{equation}
\end{cor}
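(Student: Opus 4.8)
The plan is to obtain both formulas as a direct substitution, reading the second-order part off Lemma~\ref{lem:G}. Recall from the lifted version of Theorem~\ref{thm:complexified_hitchin_connection} that on $\mathcal{A}_0^{\mathbb{C}}$ one has $\nabla^{\mathbb{C}} = \nabla^{\Tr} - \frac{1}{4\abs{t}} u^{\mathbb{C}}$ with $u^{\mathbb{C}}(V) = -\Delta_{G^{\mathbb{C}}(V)}$, so that for any vector field $V$ on $\mathcal{T}$ one has $\nabla^{\mathbb{C}}_V = \nabla^{\Tr}_V + \frac{1}{4\abs{t}}\Delta_{G^{\mathbb{C}}(V)}$. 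I would evaluate this at $V = \partial/\partial\tau$ and $V = \partial/\partial\overline{\tau}$ and insert the explicit bivectors furnished by Lemma~\ref{lem:G}.

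The key step is to rewrite $\Delta_{G^{\mathbb{C}}(V)} = \Tr\bigl(\nabla^{1,0}_t\, G^{\mathbb{C}}(V)\, \nabla^{1,0}_t\bigr)$ as an honest sum of second covariant derivatives without lower-order corrections. For this I would invoke Remark~\ref{rem:riemannian_structure}: over $\mathcal{A}_0^{\mathbb{C}}$ the Levi--Civita connection is trivial and the coordinate fields $\partial/\partial z_j$ are parallel and of type $(1,0)$ for $I_{\tau,t}$; hence contracting the $\operatorname{T}_{1,0}\otimes\operatorname{T}_{1,0}$-valued tensor of Lemma~\ref{lem:G} against $\nabla^{1,0}_t\otimes\nabla^{1,0}_t$ produces no Christoffel terms and replaces each $\nabla^{1,0}_t$ in the direction $\partial/\partial z_j$ by $\nabla_{z_j}$. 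This gives directly $\Delta_{G^{\mathbb{C}}(\partial/\partial\tau)} = -\frac{it}{\tau_2\abs{t}}\sum_{j=1}^r\bigl(\nabla_{z_j}+i\nabla_{z_{j+r}}\bigr)^2$, and likewise $\Delta_{G^{\mathbb{C}}(\partial/\partial\overline{\tau})} = -\frac{i\overline{t}}{\tau_2\abs{t}}\sum_{j=1}^r\bigl(\nabla_{z_j}-i\nabla_{z_{j+r}}\bigr)^2$.

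It then remains to identify $\nabla^{\Tr}_{\partial/\partial\tau}$ with the bare derivative $\partial/\partial\tau$ of Definition~\ref{def:trivial_covariant_derivative_tau}, the reference connection differentiating the function coefficient in the chosen trivialisation; this is consistent with the leading term already recorded in \eqref{eq:explicitHWC}. The rest is arithmetic with the constants: using $\abs{t}^2 = t\overline{t}$ one finds $\frac{1}{4\abs{t}}\cdot\bigl(-\frac{it}{\tau_2\abs{t}}\bigr) = -\frac{i}{4\tau_2\overline{t}}$ and $\frac{1}{4\abs{t}}\cdot\bigl(-\frac{i\overline{t}}{\tau_2\abs{t}}\bigr) = -\frac{i}{4\tau_2 t}$, reproducing the two displayed expressions for $\nabla^{\mathbb{C}}_\tau$ and $\nabla^{\mathbb{C}}_{\overline{\tau}}$.

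I expect the only genuinely delicate point to be the passage from the trace-Laplacian to $\sum_j(\nabla_{z_j}\pm i\nabla_{z_{j+r}})^2$, i.e. checking carefully that the triviality of the Levi--Civita connection and the parallelism of the frame really cancel every first-order term, and that $\nabla^{\Tr}$ contributes exactly $\partial/\partial\tau$ with no residual potential (as opposed to the corrected field $\delta/\delta\tau$). Everything downstream --- tracking the factors $t$, $\overline{t}$, $\abs{t}$ and $\tau_2$ --- is routine bookkeeping.
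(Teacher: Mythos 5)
Your proposal is correct and matches the paper's (implicit) argument: the corollary is stated immediately after Lemma~\ref{lem:G} with no written proof precisely because it is the direct substitution you describe — insert the bivectors of Lemma~\ref{lem:G} into $\nabla^{\mathbb{C}} = \nabla^{\Tr} + \frac{1}{4\abs{t}}\Delta_{G^{\mathbb{C}}(\bullet)}$, use the triviality of the Levi--Civita connection (Rem.~\ref{rem:riemannian_structure}) to drop all first-order corrections, and simplify $t/\abs{t}^{2} = 1/\overline{t}$. Your constant-tracking and the identification of $\nabla^{\Tr}_{\tau}$ with the bare $\partial/\partial\tau$ are both right.
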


\section{Identification of the connections on the covering spaces}
\label{sec:linconn}

\subsection{The \texorpdfstring{$\operatorname{L}^{2}$}{L-2}-connection}
\label{sec:l^2_connection}

Let $U \subset \mathcal{T}$ be an open subset and $f \colon \mathcal{A}_{0}^{\mathbb{C}} \times U \to \mathbb{C}$ a smooth function whose fibrewise restriction
\begin{equation}
	\eval[1]{f}_{\tau} \coloneqq \eval[1]{f}_{\mathcal{A}_{0}^{\mathbb{C}} \times \set{\tau}} \colon \mathcal{A}_{0}^{\mathbb{C}} \to \mathbb{C} 
\end{equation}
lies in $\widetilde{\mathcal{H}}^{\mathbb{C}}_{\tau,t}$ for every $\tau \in U$.
Let $V$ be a tangent vector on $\mathcal{T}$ and assume that $V[f \sigma_{\tau}]$ is $\operatorname{L}^2$.

\begin{defn}[$\operatorname{L}^{2}$-connection]
	The covariant derivative of $\varphi \coloneqq f \sigma_{\tau}$ along $V$ with respect to the $\operatorname{L}^{2}$-connection is
	\begin{equation}
		\nabla^{\operatorname{L}^{2}}_{V} \varphi \coloneqq \pi^{\widetilde{\mathcal{H}}} \left( V[\varphi] \right) \, .
	\end{equation}
\end{defn}

\begin{prop}[\cite{axelrod_dellapietra_witten_1991_geometric_quantisation_of_chern_simons_gauge_theory}, \S~1a]
	Suppose that $\varphi = f \sigma$ and $V$ are as above, and that moreover $\varphi$ lies in the domain of the ladder operators and their two-fold compositions.
	Then
	\begin{equation}
		\nabla^{\operatorname{L}^{2}}_{V} \varphi = {\nabla}^{\mathbb{C}}_{V} \varphi \, .
	\end{equation}
\end{prop}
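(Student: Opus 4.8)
The plan is to use the orthogonal-decomposition characterisation of the projection connection. Since $\nabla^{\operatorname{L}^{2}}_{V}\varphi = \pi^{\widetilde{\mathcal{H}}}(V[\varphi])$, it suffices to establish two facts: that $\nabla^{\mathbb{C}}_{V}\varphi$ lies in $\widetilde{\mathcal{H}}^{\mathbb{C}}_{\tau,t}$, and that $V[\varphi] - \nabla^{\mathbb{C}}_{V}\varphi$ is $\operatorname{L}^{2}$-orthogonal to it. Together these give $\pi^{\widetilde{\mathcal{H}}}(V[\varphi]) = \pi^{\widetilde{\mathcal{H}}}(\nabla^{\mathbb{C}}_{V}\varphi) = \nabla^{\mathbb{C}}_{V}\varphi$. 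The first fact is the holomorphicity-preservation already proved in Thm.~\ref{thm:complexified_hitchin_connection}, so the entire content is the orthogonality. Writing $\varphi = f\sigma_{\tau}$ with $f$ fibrewise $I_{\tau,t}$-holomorphic, I would expand $V[\varphi] = \bigl(\tfrac{\delta f}{\delta\tau} + f\,\tfrac{\delta}{\delta\tau}\log\sigma_{\tau}\bigr)\sigma_{\tau}$ for $V = \partial/\partial\tau$, the base point being held fixed as in Def.~\ref{def:trivial_covariant_derivative_tau}, and then organise the bracket into a purely holomorphic part and a part depending on the antiholomorphic coordinates $\overline{z}_{j}$.

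The key tool is a Gaussian integration-by-parts identity in the Segal--Bargmann space. Since $\partial_{z_{j}}\sigma_{\tau}^{2} = -\tfrac{\overline{z}_{j}}{2\hslash}\sigma_{\tau}^{2}$ with $\hslash = 1/\abs{t}$, for any sufficiently regular $h$ and any holomorphic $g$ one has $\langle \overline{z}_{j} h, g\rangle = \langle 2\hslash\,\partial_{z_{j}}h, g\rangle$, hence $\pi^{\widetilde{\mathcal{H}}}(\overline{z}_{j} h) = \pi^{\widetilde{\mathcal{H}}}(2\hslash\,\partial_{z_{j}}h)$; this is precisely the adjointness of the ladder operators $a_{j},a_{j}^{*}$ underlying Prop.~\ref{prop:ladder}. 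The crucial structural point is that the coordinate variations $\partial z_{j}/\partial\tau$ at fixed base point are affine-linear in $(\bm{z},\overline{\bm{z}})$ by \eqref{eq:coordinates} and \eqref{eq:varcoord}, while $\partial_{z_{j}}\sigma_{\tau}$ and the frame variation $\tfrac{\delta}{\delta\tau}\log\sigma_{\tau}$ contribute further factors linear in $\overline{\bm{z}}$. Consequently every antiholomorphic term in $V[\varphi]$ is of the form (a polynomial of degree at most two in $\overline{\bm{z}}$) times a holomorphic function times $\sigma_{\tau}$, and iterating the projection identity turns these into holomorphic differential operators of order at most two acting on $f$. This is the mechanism by which a second-order operator is generated out of the first-order expression $V[\varphi]$.

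To conclude I would substitute the explicit variations \eqref{eq:varcoord} and the formula for $G^{\mathbb{C}}$ from Lem.~\ref{lem:G}. The degree-two antiholomorphic contribution reassembles — with the constants fixed by the factor $t/\abs{t}$ in the coordinate change and by $2\hslash\,\partial_{z_{j}}$ — into the Laplacian term $-\tfrac{i}{4\tau_{2}\overline{t}}\sum_{j}(\nabla_{z_{j}} + i\nabla_{z_{j+r}})^{2}\varphi = \tfrac{1}{4\abs{t}}\Delta_{G^{\mathbb{C}}(V)}\varphi$ of Cor.~\ref{lem:CHCexplicit}, while the remaining holomorphic zeroth- and first-order pieces recombine into the fixed-$z$ derivative $\tfrac{\partial\varphi}{\partial\tau}$, which is the $\nabla^{\Tr}$-part of the complexified Hitchin connection in Cor.~\ref{lem:CHCexplicit}. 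The computation along $\overline{\tau}$ is identical. This identifies $\pi^{\widetilde{\mathcal{H}}}(V[\varphi])$ with $\nabla^{\mathbb{C}}_{V}\varphi$, and is in effect the affine, flat-metric specialisation of \cite[\S~1a]{axelrod_dellapietra_witten_1991_geometric_quantisation_of_chern_simons_gauge_theory}, recovered here through the explicit frames of \S~\ref{sec:coordinates_and_frames} (cf. Rem.~\ref{rem:general_linear_space}).

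I expect the main obstacle to be the precise bookkeeping of the frame variation $\tfrac{\delta}{\delta\tau}\log\sigma_{\tau}$ together with the symplectic-potential terms hidden inside $\nabla_{z_{j}}$: one must carefully segregate the antiholomorphic monomials from the holomorphic ones, check that all first-order holomorphic terms recombine into $\partial\varphi/\partial\tau$ while only the degree-two part survives as the Laplacian, and track the exact dependence on $t,\overline{t},\abs{t}$ and $\tau_{2}$ so that the coefficient matches Cor.~\ref{lem:CHCexplicit} on the nose. The regularity hypotheses placed on $f$ in the definition of the $\operatorname{L}^{2}$-connection are exactly what is needed to justify the integration by parts and the term-by-term application of the projection.
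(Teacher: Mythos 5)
Your proposal is correct, and its first paragraph is precisely the paper's argument: reduce to (i) $\nabla^{\mathbb{C}}_{V}\varphi\in\widetilde{\mathcal{H}}^{\mathbb{C}}_{\tau,t}$, which is the holomorphicity preservation of Thm.~\ref{thm:complexified_hitchin_connection}, and (ii) $V[\varphi]-\nabla^{\mathbb{C}}_{V}\varphi=-\tfrac{1}{4\abs{t}}\Delta_{G^{\mathbb{C}}(V)}\varphi\perp\widetilde{\mathcal{H}}^{\mathbb{C}}_{\tau,t}$. Where you diverge is in how (ii) is established. The paper does it invariantly in two lines: for a \emph{constant} type-$(1,0)$ vector field $X$ one has $(\nabla_{X})^{*}=\operatorname{div}X-\nabla_{\overline{X}}=-\nabla_{\overline{X}}$, which annihilates holomorphic sections, so $\langle\nabla_{X}\nabla_{X}\varphi\,|\,\varphi'\rangle=0$ and hence $\pi^{\widetilde{\mathcal{H}}}\bigl(\Delta_{G^{\mathbb{C}}(V)}\varphi\bigr)=0$; the only structural input is that $G^{\mathbb{C}}(V)$ is a parallel $(2,0)$-tensor. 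Your Gaussian integration-by-parts identity $\pi^{\widetilde{\mathcal{H}}}(\overline{z}_{j}h)=\pi^{\widetilde{\mathcal{H}}}(2\hslash\,\partial_{z_{j}}h)$ is exactly this adjointness in Bargmann coordinates, so the underlying analytic fact is the same; but the full coordinate computation you then outline --- expanding $V[\varphi]$, segregating antiholomorphic monomials, and reassembling against Cor.~\ref{lem:CHCexplicit} and \eqref{eq:L2explicit} --- is logically redundant once (i) and (ii) are in hand, and it is considerably heavier bookkeeping (it amounts to re-deriving the explicit local expression \eqref{eq:L2explicit} of the $\operatorname{L}^{2}$-connection, which the paper records but does not need for this proposition). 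One small slip: the expansion $V[\varphi]=\bigl(\tfrac{\delta f}{\delta\tau}+f\,\tfrac{\delta}{\delta\tau}\log\sigma_{\tau}\bigr)\sigma_{\tau}$ omits the coordinate-variation terms $\sum_{j}\bigl(\tfrac{\partial z_{j}}{\partial\tau}\partial_{z_{j}}f+\tfrac{\partial\overline{z}_{j}}{\partial\tau}\partial_{\overline{z}_{j}}f\bigr)$ distinguishing $\tfrac{\delta}{\delta\tau}$ from the fixed-point derivative in Def.~\ref{def:trivial_covariant_derivative_tau}; you clearly intend to include them later, so this is notational rather than a gap. In short: keep your first paragraph, replace the rest by the one-line adjointness argument, and you have the paper's proof.
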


\begin{proof}
    Following the proof of ~\cite[p.~311]{andersen_2012_hitchin_connection_toeplitz_operators_and_symmetry_invariant_deformation_quantisation}, for $\tau \in \mathcal{T}$ and $X$ a vector field of type $(1,0)$ on $\mathcal{A}_{0}^{\mathbb{C}}$, the adjoint operator of $\nabla_{X}$ on $\operatorname{L}^{2}(\mathcal{A}_{0}^{\mathbb{C}}, \mathcal{L}_{t}^{\mathbb{C}})$ is $\left(\nabla_{X}\right)^{*} = \operatorname{div} X - \nabla_{\overline{X}}$.
    Since the vector fields $\overline{\mathcal{X}}_{j}$ are constant and of type $(1,0)$, it follows that
    \begin{equation}
        \braket{\nabla_{\overline{\mathcal{X}}_{j}} \nabla_{\overline{\mathcal{X}}_{j}} \varphi | \varphi'} = \braket{\nabla_{\overline{\mathcal{X}}_{j}} \varphi | \nabla_{\overline{\mathcal{X}}_{j}}^{*} \varphi'} = \braket{\nabla_{\overline{\mathcal{X}}_{j}} \varphi | \nabla_{\mathcal{X}_{j}} \varphi'} = 0
    \end{equation}
    for all $\varphi' \in \widetilde{\mathcal{H}}^{\mathbb{C}}_{\tau,t}$ and $j \in \Set{1,\dotsc,r}$, and similarly for $J \overline{\mathcal{X}}_{j}$.
	Hence $\pi_{\tau}^{\widetilde{\mathcal{H}}} \bigl( u^{\mathbb{C}} (V) \varphi \bigr) = 0$ which yields
    \begin{equation}
        \nabla_{V}^{\mathbb{C}} \varphi = \pi^{\widetilde{\mathcal{H}}} \left( \nabla_{V}^{\mathbb{C}} \varphi \right) = \pi^{\widetilde{\mathcal{H}}} \left( V [\varphi] \right) = \nabla_{V}^{\operatorname{L}^{2}} \varphi \, .
    \end{equation}
\end{proof}

In the next \S~\ref{sec:conjugation_hitchin_witten} we will use the explicit local expression of the $\operatorname{L}^{2}$-connection.
This can be obtained by writing the derivative $\frac{\partial \varphi}{\partial \tau}$ using the vector field $\frac{\delta}{\delta \tau}$ from~\eqref{eq:deltaderivatives} and then combining~\eqref{eq:varcoord} with~\eqref{eq:proj_zbar}.
This leads to
\begin{equation}
		\label{eq:L2explicit}
	 		\nabla^{\operatorname{L}^2}_{\tau} \varphi = \frac{\delta f}{\delta \tau} \sigma_{\tau} - \frac{i}{16 \tau_{2}} \sum_{j = 1}^{r} \left( t \delta_{j}^{2} f + t \mu_{j}^{2} f - 4i z_{j+r} \frac{\partial f}{\partial z_{j}} + 4i z_{j} \frac{\partial f}{\partial z_{j}} \right) \sigma_{\tau} \, .
\end{equation}

\subsection{Conjugation of the Hitchin--Witten connection}
\label{sec:conjugation_hitchin_witten}

Consider a $\mathcal{T}$-family $\psi$ of functions on $\mathcal{A}_{0}$, corresponding to a family of sections $\psi \rho_{\tau}$ on $\mathcal{A}_{0}^{\mathbb{C}}$.
We will show its (lifted) Hitchin--Witten covariant derivative along $\frac{\partial}{\partial \tau}$ is
\begin{equation}
	\label{eq:HWexplicit}
	{\nabla}^{\HW}_{\tau} ( \psi \rho_{\tau}) = \frac{\partial \psi}{\partial \tau} \rho_{\tau} + \frac{it}{8 \overline{t} \tau_{2}} \sum_{j = 1}^{r} \left( \overline{t} D_{j}^{2} \psi - 2 \abs{t} M_{j} D_{j} \psi + \overline{t} M_{j}^{2} \psi \right) \rho_{\tau} \, .
\end{equation}

Next we study the polarised extension of the right-hand side to $\mathcal{A}_{0}^{\mathbb{C}}$, and then restrict it to ${Q}_{\tau}$, proving the result is the section
\begin{equation}
	\label{eq:HWextQ}
	\begin{gathered}
		\eval[3]{ \Ext \left( {\nabla}^{\HW}_{\tau} (\psi \rho) \right) }_{{Q}_{\tau}} = \frac{\partial \psi}{\partial \tau} + \frac{it}{8 \tau_{2}} \sum_{j = 1}^{r} \left( D_{j}^{2} + M_{j}^{2} \right) \psi \, ,
	\end{gathered}
\end{equation}
where $\Ext$ takes polarised extensions.

Finally we study the Bargmann transform of~\eqref{eq:HWextQ}, assuming that each summand is $\operatorname{L}^2$
and that $\psi$ is regular enough so that 
\begin{equation}
\label{eq:bargdercommute}
    \mathcal{B}_{\tau} \left( \frac{\delta \psi}{\delta \tau} \right) = \frac{\delta \mathcal{B}_{\tau} (\psi)}{\delta \tau}
    \qquad \text{and} \qquad
    \mathcal{B}_{\tau} \left( \frac{\delta \psi}{\delta \overline{\tau}} \right) = \frac{\delta \mathcal{B}_{\tau} (\psi)}{\delta \overline{\tau}} \, .
\end{equation}

Then we obtain the following reformulation of Thm.~\ref{thm:l^2=HW}.

\begin{thm}
	\label{thm:l^2=HWfull}
	Let $U \subseteq \mathcal{T}$ be open, and $\psi \colon U \times \mathcal{A}_{0}^{\mathbb{C}} \to \mathbb{C}$ a smooth family of polarised functions satisfying~\eqref{eq:bargdercommute} such that $\eval[1]{\psi}_{{Q}_{\tau}}$ lies in the domain of all two-fold compositions of ladder operators.
	Then
	\begin{equation}
		\begin{gathered}
			\mathcal{B}_{\tau} \left( \eval[2] {\nabla^{\HW}_{\tau} (\psi \rho)}_{{Q}_{\tau}} \right) = \nabla^{\operatorname{L}^{2}}_{\tau} \left( \mathcal{B}_{\tau} \left( \eval[1]{\psi}_{{Q}_{\tau}} \right) \right) \, , \\
			\mathcal{B}_{\tau} \left( \eval[2]{ {\nabla}^{\HW}_{\overline{\tau}} (\psi \rho) }_{{Q}_{\tau}} \right) = \nabla^{\operatorname{L}^{2}}_{\overline{\tau}} \left( \mathcal{B}_{\tau} \left( \eval[1]{\psi}_{{Q}_{\tau}} \right) \right) \, .
		\end{gathered}
	\end{equation}
\end{thm}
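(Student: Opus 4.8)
The plan is to prove Thm.~\ref{thm:l^2=HWfull} by reducing it to an identification of explicit local expressions in the $\tau$-dependent coordinates of \S~\ref{sec:coordinates_and_frames}. The backbone consists of the three formulas \eqref{eq:HWexplicit}, \eqref{eq:HWextQ} and \eqref{eq:L2explicit}, which I would first establish by direct computation: \eqref{eq:HWexplicit} follows from the explicit Hitchin--Witten connection \eqref{eq:explicitHWC} upon rewriting $\nabla_{\overline{\mathcal{X}}_{j}}$ and the frame variation $\partial_{\tau}\rho_{\tau}$ in terms of the operators $M_{j},D_{j}$ of Def.~\ref{def:operators}; formula \eqref{eq:L2explicit} follows in the same manner from Cor.~\ref{lem:CHCexplicit} and $\partial_{\tau}\sigma_{\tau}$; and \eqref{eq:HWextQ} is obtained by taking the polarised extension and restricting to $Q_{\tau}$, where the mixed contribution $M_{j}D_{j}$ drops out and the prefactor $\tfrac{it}{8\overline{t}\tau_{2}}$ collapses to $\tfrac{it}{8\tau_{2}}$. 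These are pure coordinate calculations, and I would treat them as the scaffolding on which the intertwining rests.

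With \eqref{eq:HWextQ} in hand, I would apply $\mathcal{B}_{\tau}$ and set $\phi \coloneqq \mathcal{B}_{\tau}\bigl(\psi|_{Q_{\tau}}\bigr)$. For the second-order part, \eqref{eq:transfoperators} sends $M_{j}\mapsto \tfrac{i}{2}(\delta_{j}-\mu_{j})$ and $D_{j}\mapsto\tfrac{i}{2}(\mu_{j}+\delta_{j})$, and since $\mu_{j}$ and $\delta_{j}$ commute the cross terms cancel:
\[
	\mathcal{B}_{\tau}\bigl((D_{j}^{2}+M_{j}^{2})\psi\bigr) = -\tfrac{1}{4}\bigl((\mu_{j}+\delta_{j})^{2}+(\delta_{j}-\mu_{j})^{2}\bigr)\phi = -\tfrac{1}{2}(\mu_{j}^{2}+\delta_{j}^{2})\phi \, .
\]
Multiplying by $\tfrac{it}{8\tau_{2}}$ gives $-\tfrac{it}{16\tau_{2}}(\mu_{j}^{2}+\delta_{j}^{2})\phi$, which is exactly the second-order summand of $\nabla^{\operatorname{L}^{2}}_{\tau}\phi$ in \eqref{eq:L2explicit}. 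Thus the principal parts agree term by term, with the powers of $t$ matching correctly.

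What remains is the transport term $\mathcal{B}_{\tau}\bigl(\partial_{\tau}\psi|_{Q_{\tau}}\bigr)$, and this is where I expect the main difficulty. The hypothesis \eqref{eq:bargdercommute} lets me replace $\tfrac{\delta}{\delta\tau}\phi$ by $\mathcal{B}_{\tau}\bigl(\tfrac{\delta}{\delta\tau}\psi|_{Q_{\tau}}\bigr)$, so the whole theorem reduces to showing that the discrepancy $\bigl(\tfrac{\partial}{\partial\tau}-\tfrac{\delta}{\delta\tau}\bigr)\psi$, restricted to $Q_{\tau}$ and pushed through $\mathcal{B}_{\tau}$, reproduces precisely the first-order part of \eqref{eq:L2explicit}. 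By Def.~\ref{def:trivial_covariant_derivative_tau} this discrepancy is the first-order fibre operator $\sum_{j}\bigl(\tfrac{\partial p_{j}}{\partial\tau}\partial_{p_{j}}+\tfrac{\partial q_{j}}{\partial\tau}\partial_{q_{j}}\bigr)$ whose coefficients, by \eqref{eq:varcoord}, are linear in $(\bm{p},\bm{q})$; hence after \eqref{eq:transfoperators} it becomes a weight-preserving first-order operator in the $z$-variables, of the right shape to match the $z_{j+r}\partial_{z_{j}}$-type terms in \eqref{eq:L2explicit}. The delicate point is that $\partial_{\tau}$ in \eqref{eq:HWextQ} acts \emph{before} the restriction to $Q_{\tau}$, so one must carefully track both the $\tau$-dependence of the Lagrangian $Q_{\tau}$ and that of the Bargmann transform itself; verifying that this bookkeeping yields exactly the stated first-order terms, with no spurious residue, is the crux of the argument. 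Once the $\tau$-identity is settled, the $\overline{\tau}$-identity follows by the same steps with $\overline{\mathcal{X}}_{j}$ replaced by $\mathcal{X}_{j}$, $t$ by $\overline{t}$, and the holomorphic operators by their conjugates, using the second lines of \eqref{eq:explicitHWC} and Cor.~\ref{lem:CHCexplicit} together with the second relation in \eqref{eq:bargdercommute}.
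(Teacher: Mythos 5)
Your outline follows the same route as the paper's proof: establish the local formul\ae{} \eqref{eq:HWexplicit}, \eqref{eq:HWextQ} and \eqref{eq:L2explicit} in the coordinates of \S~\ref{sec:coordinates_and_frames}, then push \eqref{eq:HWextQ} through $\mathcal{B}_{\tau}$ using Prop.~\ref{prop:ladder} and hypothesis \eqref{eq:bargdercommute}, and match against \eqref{eq:L2explicit}. Your second-order bookkeeping is correct and is exactly the paper's: the cross terms in $(\mu_{j}+\delta_{j})^{2}+(\delta_{j}-\mu_{j})^{2}$ cancel and the prefactor $\frac{it}{8\tau_{2}}\cdot\bigl(-\frac{1}{2}\bigr)$ reproduces the $-\frac{it}{16\tau_{2}}(\delta_{j}^{2}+\mu_{j}^{2})$ of \eqref{eq:L2explicit}; likewise your identification of the transport term as the delicate point, and the reduction of the $\overline{\tau}$-identity to conjugation, agree with the paper.

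There is, however, one concrete misstatement at exactly the delicate point. You claim that in passing from \eqref{eq:HWexplicit} to \eqref{eq:HWextQ} ``the mixed contribution $M_{j}D_{j}$ drops out'' upon polarised extension and restriction to $Q_{\tau}$. It does not: $M_{j}D_{j}\psi$ is itself a polarised function, so extension followed by restriction to $Q_{\tau}$ returns it unchanged, and on that reasoning \eqref{eq:HWextQ} would retain a term $-\frac{it^{2}}{4\abs{t}\tau_{2}}\sum_{j}M_{j}D_{j}\psi$. The actual mechanism is that the transport term $\frac{\partial\psi}{\partial\tau}$ in \eqref{eq:HWexplicit} is \emph{not} polarised; its restriction to $\mathcal{A}_{0}$ must first be rewritten as a polarised function via \eqref{eq:partialvsdelta} together with \eqref{eq:A0pq} (on $\mathcal{A}_{0}$ one has $p_{j}+ip_{j+r}=\tfrac{t}{\abs{t}}(q_{j+r}-iq_{j})$), and this rewriting produces an extra summand $+\frac{it^{2}}{4\abs{t}\tau_{2}}\sum_{j}M_{j}D_{j}\psi$ which cancels the mixed term, leaving the genuinely first-order piece $-\frac{1}{4\tau_{2}}\sum_{j}\bigl(q_{j+r}\frac{\partial\psi}{\partial q_{j}}-q_{j}\frac{\partial\psi}{\partial q_{j+r}}\bigr)$ whose Bargmann transform gives the $z_{j+r}\frac{\partial}{\partial z_{j}}-z_{j}\frac{\partial}{\partial z_{j+r}}$ terms of \eqref{eq:L2explicit}. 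Note that your two simplifications---discarding $M_{j}D_{j}$ on restriction, and identifying $\Ext\bigl(\frac{\partial\psi}{\partial\tau}\big|_{\mathcal{A}_{0}}\rho_{\tau}\bigr)\big|_{Q_{\tau}}$ with $\frac{\partial\psi}{\partial\tau}\big|_{Q_{\tau}}$---are each individually false but happen to compensate; to turn the plan into a proof you must exhibit this cancellation explicitly, which is precisely what the paper's computation of $\Ext\bigl(\frac{\partial\psi}{\partial\tau}\big|_{\mathcal{A}_{0}}\rho_{\tau}\bigr)\big|_{Q_{\tau}}$ does.
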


In the proof we will use 
	\begin{equation}
		\label{eq:projXj}
		\begin{gathered}
			\pi_{{Q}_{\tau}} \mathcal{X}_{j} = \frac{i \overline{t}}{2 \abs{t}} \left( \frac{\partial}{\partial q_{j}} - i \frac{\partial}{\partial q_{j+r}} \right) \, ,
			\qquad  \qquad
			\pi_{{Q}_{\tau}} \overline{\mathcal{X}}_{j} = - \frac{it}{2 \abs{t}} \left( \frac{\partial}{\partial q_{j}} + i \frac{\partial}{\partial q_{j+r}} \right) \, , \\
			g_{\tau} (A , \mathcal{X}_{j}) = \frac{i \overline{t}}{ \abs{t}} \left( q_{j} - i q_{j+r} \right) 
			\qquad \qquad
			g_{\tau} (A , \overline{\mathcal{X}}_{j}) = - \frac{i t}{ \abs{t}} \left( q_{j} + i q_{j+r} \right) \, ,
		\end{gathered}
	\end{equation}
	for $j \in \Set{1,\dotsc,r}$ and $A \in \mathcal{A}_{0}$, obtained from Def.~\ref{def:mathcalXj},~\eqref{eq:projPQ}, and~\eqref{eq:coordinates}.
	
\begin{proof}
	
	It is enough to verify the statement for the derivative in $\tau$. We start by proving~\eqref{eq:HWexplicit}.
	Using~\eqref{eq:explicitHWC} we have
	\begin{equation}
    \label{eq:HWCexpand1}
		{\nabla}^{\HW}_{\tau} (\psi \rho_{\tau}) = \frac{\partial \psi}{\partial \tau} \rho_{\tau} + \psi \frac{\partial \rho_{\tau}}{\partial \tau} - \frac{i}{2 t \tau_{2}} \sum_{j = 1}^{r} \nabla_{\overline{\mathcal{X}}_{j}} \nabla_{\overline{\mathcal{X}}_{j}} (\psi \rho_{\tau}) \, .
	\end{equation}
	
	To expand $\rho_{\tau}$ note $\abs{t} \bm{p} \cdot \bm{q}$ is the $\omega_{t}$-pairing of the projections of a vector onto $P_{\tau}$ and $Q_{\tau}$.
	For $A \in \mathcal{A}_{0}$ this can be written $\frac{s}{2} g_{\tau} (A,A)$, using~\eqref{eq:projPQ}, hence
	\begin{equation}
		\psi \frac{\partial \rho_{\tau}}{\partial \tau} = \frac{s t}{4 \tau_{2} \overline{t}} \sum_{j = 1}^{r} M_{j}^{2} \psi \rho_{\tau} \, ,
	\end{equation}
	by Cor.~\ref{cor:varg} and~\eqref{eq:projXj}.
	Similarly using $\eval[1]{\omega_{t}}_{\mathcal{A}_{0}} = k \omega$, and that $\overline{\mathcal{X}}_{j}$ is of type $(1,0)$ for $I_{\tau}$, one finds
	\begin{equation}
		\nabla_{\overline{\mathcal{X}}_{j}} \rho_{\tau} = - \frac{t}{2} g_{\tau} (A, \overline{\mathcal{X}}_{j}) \rho_{\tau}
		\qquad \text{and} \qquad
		\nabla_{\overline{\mathcal{X}}_{j}} \nabla_{\overline{\mathcal{X}}_{j}} \rho_{\tau} = \frac{t^{2}}{4} \left( g_{\tau} (A , \overline{\mathcal{X}}_{j} ) \right)^{2} \rho_{\tau} \, .
	\end{equation}
	Moreover, since $\psi$ is a polarised \emph{function}, so are all its derivatives, so using Def.~\ref{def:mathcalXj} and~\eqref{eq:projXj} one has 
	\begin{equation}
		\overline{\mathcal{X}}_{j} [ \psi ] = - \frac{it}{2} D_{j} \psi
		\qquad \text{and} \qquad
		\overline{\mathcal{X}}_{j} \left[ \overline{\mathcal{X}}_{j} [ \psi ] \right] = - \frac{t^{2}}{4} D_{j}^{2} \psi \, .
	\end{equation}
	Combining these relations and expanding the second-order operator in~\eqref{eq:HWCexpand1} yields
	\begin{equation}
			{\nabla}^{\HW}_{\tau} (\psi \rho_{\tau})
			= \frac{\partial \psi}{\partial \tau} \rho_{\tau} + \frac{it}{8 \overline{t} \tau_{2}} \sum_{j = 1}^{r} \left( \left( \overline{t} D_{j}^{2} - 2 \abs{t} M_{j} D_{j} + \overline{t} M_{j}^{2} \right) \psi \right) \rho_{\tau} \, ,
	\end{equation}
	which is equivalent to~\eqref{eq:HWexplicit}, as desired.
	
	Next we study the polarised extension of the above and establish~\eqref{eq:HWextQ}.
	In fact, each individual term in~\eqref{eq:HWexplicit} is a polarised object (restricted to $\mathcal{A}_{0}$), except for the term $\frac{\partial \psi}{\partial \tau}$, which can be expanded as
	\begin{equation}
    \label{eq:partialvsdelta}
        \begin{split}
            \frac{\partial \psi}{\partial \tau} \!=\! \frac{\delta \psi}{\delta \tau}
            \!-\! \frac{1}{4 \tau_{2}} \!\sum_{j = 1}^{r} \! \left( \! \left( \! q_{j+r} + \frac{t}{\abs{t}} (p_{j} + i p_{j+r}) \! \right) \! \frac{\partial \psi}{\partial q_{j}} \!-\! \left( \! q_{j} - \frac{it}{\abs{t}} (p_{j} + i p_{j+r}) \! \right) \! \frac{\partial \psi}{\partial q_{j+r}} \! \right) ,
        \end{split}
	\end{equation}
	using~\eqref{eq:deltaderivatives} and~\eqref{eq:varcoord} and the polarisation condition on $\psi$.
	By Rem.~\ref{rem:deltader}, the derivative $\frac{\delta \psi}{\delta \tau}$ is polarised, while using~\eqref{eq:A0pq} we can re-write
	\begin{equation}
		\eval[3]{ \frac{\partial \psi}{\partial \tau} }_{\mathcal{A}_{0}} = \frac{\delta \psi}{\delta \tau} - \frac{1}{4 \tau_{2}} \sum_{j = 1}^{r} \left( q_{j+r} \frac{\partial \psi}{\partial q_{j}} - q_{j} \frac{\partial \psi}{\partial q_{j+r}} - \frac{i t^{2}}{\abs{t}} M_{j} D_{j} \psi \right) \, .
	\end{equation}
	The right-hand side expresses a polarised function on $\mathcal{A}_{0}$ and therefore
	\begin{equation}
		\eval[4]{ \Ext \left( \eval[3]{ \frac{\partial \psi}{\partial \tau} }_{\mathcal{A}_{0}} \rho_{\tau} \right) }_{{Q}_{\tau}} = \eval[3]{ \frac{\partial \psi}{\partial \tau} }_{{Q}_{\tau}} + \frac{i t^{2}}{4 \abs{t} \tau_{2}} \sum_{j = 1}^{r} M_{j} D_{j} \eval[1]{ \psi }_{{Q}_{\tau}} \, .
	\end{equation}
	Combined with~\eqref{eq:HWexplicit}, this gives~\eqref{eq:HWextQ}.
	We can now apply the Bargmann transform to each term, using Prop.~\ref{prop:ladder} and condition~\eqref{eq:bargdercommute}, which yields
	\begin{equation}
		\begin{split}
			\mathcal{B}_{\tau} & \left( \eval[2]{ \Ext \left( {\nabla}^{\HW}_{\tau} (\psi \rho) \right) }_{{Q}_{\tau}} \right) = \\
			={}& \frac{ \delta \mathcal{B}_{\tau} ( \psi)}{\delta \tau}
			+ \frac{i}{8 \tau_{2}} \sum_{j = 1}^{r} \left( 2i \left( z_{j+r} \frac{\partial}{\partial z_{j}} - z_{j} \frac{\partial}{\partial z_{j+r}} \right) - \frac{t}{2} \left( \delta_{j}^{2} + \mu_{j}^{2} \right) \right) \mathcal{B}_{\tau} (\psi) \, ,
		\end{split}
	\end{equation}
	which agrees with~\eqref{eq:L2explicit}.
\end{proof}

\section{Bargmann transform on the moduli spaces}
\label{sec:Barg_below}

Throughout this section we assume $\tau$ to be fixed.
We shall discuss a version of the Bargmann transform $\underline{\mathcal{B}}_{\tau} \colon \operatorname{L}^{2}_{k} \to \mathcal{H}_{\tau,t}^{\mathbb{C}}$ on the moduli spaces and prove Thm.~\ref{thm:CH=HW}.

Recall that $W$ acts on $\mathcal{A}_{0}^{\mathbb{C}}$ by linear isometries preserving $\mathcal{A}_{0}$ and the hyper-Kähler structure, so that $P_{\tau}$ and $Q_{\tau}$ are fixed.
Fixing a $W$-invariant fundamental domain $\Dom \subset \mathcal{A}_{0}$ for $\mathcal{T}_{0}$, we obtain one for $\mathcal{A}_{0}^{\mathbb{C}}$ as $\DomC \coloneqq \Dom+P_{\tau}$, and one for the induced action on $Q_{\tau}$ as $\DomQ \coloneqq \DomC \cap Q_{\tau}$.
Finally, given $a \in \mathcal{T}_{0}$, we will denote $\bm{u}_{a}$, $\bm{q}_{a}$, $\bm{p}_{a}$, and $\bm{z}_{a}$ its coordinates in to the various frames on $\mathcal{A}_{0}^{\mathbb{C}}$.

Using~\eqref{eq:cocycle}, a section on $\mathcal{M}_{\fl}^{\mathbb{C}}$ is equivalent a $W$-invariant $\phi$ on $\mathcal{A}_{0}^{(\mathbb{C})}$ such that
\begin{equation}
	\phi(\bm{p}+\bm{p}_{a},\bm{q}+\bm{q}_{a}) = \phi(\bm{p},\bm{q}) \exp\left(\frac{-i\abs{t}}{2} \left(\bm{p}\cdot\bm{q}_{a} - \bm{q}\cdot\bm{p}_{a}\right)\right) \, ,
\end{equation}
for $a\in \mathcal{T}_{0}$.
A $P_{\tau}$-polarised one corresponds to a $W$-invariant $\psi$ on $Q_{\tau}$ with
\begin{equation}
	\psi(\bm{q}+\bm{q}_{a}) = \psi(\bm{q}') \exp\left(\frac{i\abs{t}}{2} \bm{p} \cdot (2\bm{q} + \bm{q}_{a})\right) \, .
\end{equation}
We shall often identify sections with their lifts.
Note that the kernel~\eqref{eq:Barg_kernel} satisfies
\begin{equation}
	\label{eq:Barg_ker_periodic}
	B(\bm{q}'+\bm{q}_{a}, \bm{z}) \exp \biggl( \frac{i\abs{t}}{2} (2 \bm{q}' \cdot \bm{p}_{a} + \bm{p}_{a} \cdot \bm{q}_{a}) \biggr) \!=\!
	B(\bm{q}', \bm{z}-\bm{z}_{a}) \exp \biggl(-\frac{i\abs{t}}{2}(\bm{p}\cdot\bm{q}_{a}-\bm{q}\cdot \bm{p}_{a})\biggr) \, .
\end{equation}

\begin{prop}
	\label{prop:eval_functional}
	For every fixed $\bm{z}$, the functional $T_{\bm{z}} \colon \operatorname{L}^{2,\mathbb{C}}_{t} \to \mathbb{C}$ defined by
	\begin{equation}
		T_{\bm{z}} \phi \coloneqq \left( \frac{\abs{t}}{2\pi}\right)^{\frac{r}{2}} \int_{\mathcal{A}_{0}^{\mathbb{C}}} \phi(\bm{z}') e^{-\frac{\abs{t}}{4} \left(\abs{\bm{z}}^{2} -2\bm{z}\cdot\bm{z}'+\abs{\bm{z}'}^{2}\right)} \dif \bm{z}'
	\end{equation}
	is bounded, and it restricts on $\mathcal{H}_{\tau,t}^{\mathbb{C}}$ to the evaluation at $\bm{z}$.
\end{prop}

\begin{proof}
	By uniform convergence in $\bm{z}'$ on compact sets, the sum
	\begin{equation}
		R_{\bm{z}} (\bm{z}') \coloneqq \sum_{a \in \mathcal{T}_{0}} \abs{e^{-\frac{\abs{t}}{4} \left(\abs{\bm{z}}^{2} -2\bm{z}\cdot(\bm{z}'+\bm{z}_{a})+\abs{\bm{z}'+\bm{z}_{a}}^{2}\right)}}
		= \sum_{a \in \mathcal{T}_{0}} e^{-\frac{\abs{t}}{4} \abs{\bm{z}-\bm{z}'-\bm{z}_{a}}^{2}}
	\end{equation}
	defines a $\mathcal{T}_{0}$-periodic smooth function on $\mathcal{A}_{0}^{\mathbb{C}}$, thus descending to $\mathcal{M}_{\fl}^{\mathbb{C}}$.
	Using $\mathcal{T}_{0} \subset \mathcal{A}_{0}$ and $\mathcal{A}_{0} \perp J \mathcal{A}_{0}$, the above can be expressed in $\tau$-independent coordinates as $R'_{\bm{z}}(\bm{v}') \exp\bigl(-\frac{\abs{t}}{4} \abs{\bm{u}'}_{g_{\tau}^{\mathbb{C}}}^{2}\bigr)$ for a smooth periodic $R'_{\bm{z}}$.
	Therefore $R_{\bm{z}}$ is $\operatorname{L}^{2}$ on $\mathcal{M}_{\fl}^{\mathbb{C}}$, so $\abs{T_{\bm{z}}\phi}$ is bounded by the $\operatorname{L}^{2}$-product of $\abs{\phi}$ and $R_{\bm{z}}$, showing continuity.
	
	It is well known~\cite{woodhouse_1980_geometric_quantisation} that $T_{\bm{z}} \varphi = \varphi(\bm{z})$ for $\varphi \in \widetilde{\mathcal{H}}_{\tau,t}^{\mathbb{C}}$.
	The proof uses only the Cauchy formula and Fubini-Tonelli, and holds for holomorphic $\varphi$ provided the integral converges absolutely, which we just checked to be the case.
\end{proof}

\begin{thm}
	There is a unitary linear mapping $\underline{\mathcal{B}}_{\tau} \colon \operatorname{L}^{2}_{k} \to \mathcal{H}_{\tau,t}^{\mathbb{C}}$ given by
	\begin{equation}
		\label{eq:bargbelow}
		\left( \underline{\mathcal{B}}_{\tau} (\psi) \right) (\bm{z}) \coloneqq \int_{Q_{\tau}} \psi (\bm{q}') B(\bm{q}',\bm{z}) \dif \vol \bm{q}' \, .
	\end{equation}
\end{thm}

\begin{proof}
	By a similar argument as in in the proof of Prop.~\ref{prop:eval_functional}, for fixed $\bm{z}$ the integral defines a bounded functional $S_{\bm{z}} \colon \operatorname{L}^{2}_{k} \to \mathbb{C}$.
	Fixing $\psi$ and varying $\bm{z}$ defines a holomorphic section $\varphi$ of $L_{t}^{\mathbb{C}} \to \mathcal{A}_{0}^{\mathbb{C}}$.
	Its $\mathcal{K}_{0}$-equivariance follows from that of $\psi$ by changing variables and using~\eqref{eq:Barg_ker_periodic} and the fact that $W$ acts by reflections.

	Suppose now that $\psi$ is smooth.
	Then $\abs{\psi}$ is bounded on $\mathcal{M}_{\fl}$ and therefore on $Q_{\tau}$.
	In order to compute $\norm{\varphi}^{2}$, we first consider $\varphi_{\lambda}(\bm{z}) \coloneqq \varphi(\bm{z}) \exp(-\lambda^{2}\abs{t} \abs{\bm{p}}^{2})$ for positive $\lambda$, and the inner products
	\begin{equation}
		\begin{split}
			\braket{\varphi_{\lambda}, \varphi_{\mu}} = \frac{\abs{t}^{3r}}{4^{r} \pi^{3r} \abs{W}} \int\limits_{\DomC} \int\limits_{Q_{\tau}} \int\limits_{Q_{\tau}} &
				\psi(\bm{q}') \overline{\psi(\bm{q}'')}
				e^{-\frac{\abs{t}}{2}\bigl( \abs{\bm{q}-\bm{q}'}^{2} + \abs{\bm{q}-\bm{q}''}^{2}\bigr)} \cdot \\
				& \cdot e^{-\frac{\abs{t}}{2} \bigl((\lambda^{2}+\mu^{2}) \abs{\bm{p}}^{2} - 2i\bm{p}\cdot(\bm{q}'-\bm{q}'')\bigr)} \dif \bm{q}' \dif\bm{q}'' \dif \bm{z} \, .
		\end{split}
	\end{equation}
	By absolute convergence we may apply Fubini-Tonelli, and integrating in $\bm{p}$ yields
	\begin{equation}
		\begin{split}
			\braket{\varphi_{\lambda},\varphi_{\mu}} = \frac{\abs{t}^{2r}}{2^{r} \pi^{2r} \abs{W}(\lambda^{2}+\mu^{2})^{r}} \int\limits_{\DomQ} \int\limits_{Q_{\tau}} \int\limits_{Q_{\tau}} &
			\psi(\bm{q}') \overline{\psi(\bm{q}'')}
			e^{-\frac{\abs{t}}{2}\bigl( \abs{\bm{q}-\bm{q}'}^{2} + \abs{\bm{q}-\bm{q}''}^{2}\bigr)} \cdot \\
			& \cdot e^{-\frac{\abs{t}}{2(\lambda^{2}+\mu^{2})} \abs{\bm{q}'-\bm{q}''}^{2}} \dif \bm{q}' \dif\bm{q}'' \dif \bm{q} \, .
		\end{split}
	\end{equation}
	Setting $\bm{q}' = \bm{\xi}+\alpha\bm{\eta}$ and $\bm{q}'' =\bm{\xi}-\alpha\bm{\eta}$ for $\alpha \coloneqq \sqrt{\frac{\lambda^{2} + \mu^{2}}{\lambda^{2}+\mu^{2}+2}}$ gives
	\begin{equation}
		\begin{split}
			\braket{\varphi_{\lambda}, \varphi_{\mu}} = \frac{2^{r}\abs{t}^{2r}}{\pi^{2r} \abs{W}(\lambda^{2}+\mu^{2}+2)^{r}} \int\limits_{\DomQ} \int\limits_{Q_{\tau}} \int\limits_{Q_{\tau}} &
			\psi(\bm{\xi}+\alpha\bm{\eta}) \overline{\psi(\bm{\xi}-\alpha\bm{\eta})} \cdot \\
			& \cdot e^{-\abs{t}\bigl(\abs{\bm{q}-\bm{\xi}}^{2}+\abs{\bm{\eta}}^{2}\bigr)} \dif \bm{\xi} \dif\bm{\eta} \dif \bm{q} \, .
		\end{split}
	\end{equation}
	By the continuity of $\psi$ and dominated convergence we then have
	\begin{equation}
		\begin{split}
			L \coloneqq \lim_{(\lambda,\mu) \to (0,0)} \braket{\varphi_{\lambda}, \varphi_{\mu}}
			={}& \frac{\abs{t}^{2r}}{\pi^{2r}\abs{W}} \int\limits_{\DomQ} \int\limits_{Q_{\tau}} \int\limits_{Q_{\tau}} \abs{\psi(\bm{\xi})}^{2}
			e^{-\abs{t}\bigl(\abs{\bm{q}-\bm{\xi}}^{2}+\abs{\bm{\eta}}^{2}\bigr)} \dif \bm{\xi} \dif\bm{\eta} \dif \bm{q} = \\
			={}& \frac{\abs{t}^{r}}{\pi^{r}\abs{W}} \int\limits_{\DomQ} \int\limits_{Q_{\tau}} \abs{\psi(\bm{\xi})}^{2}
			e^{-\abs{t}\abs{\bm{q}-\bm{\xi}}^{2}} \dif \bm{\xi} \dif \bm{q} \, .
		\end{split}
	\end{equation}
	Although $\bm{q}$ runs over $\DomQ$, we can use the periodicity of $\abs{\psi(\bm{\xi})}^{2}$ to obtain
	\begin{equation}
		\begin{split}
			L ={}& \frac{\abs{t}^{r}}{\pi^{r}\abs{W}} \sum_{a \in \mathcal{T}_{0}} \int\limits_{\DomQ} \int\limits_{\DomQ} \abs{\psi(\bm{\xi})}^{2}
			e^{-\abs{t}\abs{\bm{q}+\bm{q}_{a}-\bm{\xi}}^{2}} \dif \bm{\xi} \dif \bm{q} =\\
			={}& \frac{\abs{t}^{r}}{\pi^{r}\abs{W}} \int\limits_{\DomQ} \abs{\psi(\bm{\xi})}^{2} \biggl(\int_{Q_{\tau}} e^{-\abs{t}\abs{\bm{q}+\bm{q}_{a}-\bm{\xi}}^{2}} \dif\bm{q} \biggr) \dif\bm{\xi} = \frac{1}{\abs{W}} \int_{\DomQ} \abs{\psi(\bm{\xi})}^{2} \dif\bm{\xi} = \norm{\psi}^{2} \, .
		\end{split}
	\end{equation}
	
	We proved that $\braket{\varphi_{\lambda},\varphi_{\mu}}$ tends to $\norm{\psi}^{2}$ when $(\lambda,\mu) \to (0,0)$, so $\norm{\varphi_{\lambda}-\varphi_{\mu}}$ tends to $0$.
	By completeness of $\operatorname{L}^{2,\mathbb{C}}_{t}$, we can conclude that $\varphi_{\lambda}$ has a limit in that space, with norm $\norm{\psi}^{2}$.
	Since $\varphi_{\lambda}$ has pointwise limit $\varphi$, the two have to coincide.
	
	We have shown that the restriction $U$ of $\underline{\mathcal{B}}_{\tau}$ to the smooth sections is unitary.
	It remains to prove that the continuous extension of $U$ to $\operatorname{L}^{2}_{k}$ coincides with $\underline{\mathcal{B}}_{\tau}$.
	For every $\bm{z}$, however, the tautological identity $T_{\bm{z}} \circ U = S_{\bm{z}}$ holds on smooth sections, and extends by continuity to all of $\operatorname{L}^{2}_{k}$.
\end{proof}

\begin{thm}
	There is a unique bounded map $\underline{\mathcal{B}}'_{\tau} \colon \operatorname{L}^{2,\mathbb{C}}_{t} \to \operatorname{L}^{2}_{k}$ defined on an appropriate dense subspace by
	\begin{equation}
		\bigl(\underline{\mathcal{B}}'_{\tau} (\phi)\bigr)(\bm{q}) \coloneqq \int_{\mathcal{A}_{0}^{\mathbb{C}}} \phi(\bm{z}') \overline{B(\bm{q},\bm{z}')} \dif \bm{z}' \, .
	\end{equation}
	Furthermore, if $\varphi \in \mathcal{H}_{\tau,t}^{\mathbb{C}}$ then $\underline{\mathcal{B}}_{\tau} \bigl(\underline{\mathcal{B}}'_{\tau} (\varphi)\bigr) = \varphi$.
\end{thm}

Throughout the proof, the symbol $\doteq$ will mean that two quantities agree up to a constant normalisation which may depend on $t$, $r$, and $\abs{W}$, but nothing else.

\begin{proof}
	Fix a smooth compactly supported section $\phi$ on $\mathcal{M}_{\fl}^{\mathbb{C}}$, for which the integral converges absolutely.
	For each $\bm{q}$ call $\phi_{\bm{q}}$ its restriction to $P_{\tau} + \bm{q}$ and consider
	\begin{equation}
		\bigl(\mathcal{F}(\phi_{\bm{q}})\bigr)(\bm{\xi}) \coloneqq \int_{P_{\tau}} \phi(\bm{z}) e^{-\frac{i\abs{t}}{2} \bm{p} \cdot \bm{\xi}} \dif \bm{\xi}
	\end{equation}
	Using a change of variable and the quasi-periodicity of $\phi$ we can write
	\begin{equation}
		\label{eq:Fourier_periodicity}
		\begin{split}
			\bigl(\mathcal{F}(\phi_{\bm{q}+\bm{q}_{a}})\bigr)(\bm{\xi}) ={}& \int_{P_{\tau}} \phi(\bm{z}+\bm{z}_{a}) e^{-\frac{i\abs{t}}{2}(\bm{p}+\bm{p}_{a}) \cdot \bm{\xi}} \dif\bm{\xi} =\\
			={}& \int_{P_{\tau}} \phi(\bm{z}) e^{-\frac{i\abs{t}}{2} \bigl(\bm{p}\cdot(\bm{\xi}+\bm{q}_{a}) + \bm{p}_{a} \cdot(\bm{\xi}-\bm{q})\bigr)} \dif\bm{p} = \\
			={}& \bigl(\mathcal{F}(\phi_{\bm{q}})\bigr)(\bm{\xi}+\bm{q}_{a}) e^{-\frac{i\abs{t}}{2} \bm{p}_{a} \cdot(\bm{\xi}-\bm{q})} \, .
		\end{split}
	\end{equation}
	By the unitarity of the Fourier transform we can write
	\begin{equation}
		\norm{\phi}^{2}
		\doteq \int\limits_{\DomQ} \int_{P_{\tau}} \abs{\phi(\bm{z})}^{2} \dif\bm{p} \dif\bm{q}
		\doteq \int\limits_{\DomQ} \int_{Q_{\tau}} \abs{\bigl(\mathcal{F}(\phi_{\bm{q}})\bigr)(\bm{\xi})}^{2} \dif\bm{\xi} \dif\bm{q} \, .
	\end{equation}
	Change now variable to $\bm{q} - 2 \bm{\xi}$ and use~\eqref{eq:Fourier_periodicity} to obtain
	\begin{equation}
		\begin{split}
			\norm{\phi}^{2} \doteq{}& \int_{\DomQ} \int_{Q_{\tau}} \abs{\bigl(\mathcal{F}(\phi_{\bm{q}})\bigr)(\bm{q}-2\bm{\xi})}^{2} \dif\bm{\xi} \dif\bm{q} \doteq \\
			\doteq{}& \sum_{a \in \mathcal{T}_{0}} \int_{\DomQ}\int_{\DomQ} \abs{\bigl(\mathcal{F}(\phi_{\bm{q}})\bigr)(\bm{q}-2\bm{\xi}-2\bm{q}_{a})}^{2} \dif\bm{\xi} \dif\bm{q} =\\
			={}& \sum_{a \in \mathcal{T}_{0}} \int_{\DomQ}\int_{\DomQ} \abs{\bigl(\mathcal{F}(\phi_{\bm{q}-\bm{q}_{a}})\bigr)((\bm{q}-\bm{q}_{a})-2\bm{\xi})}^{2} \dif\bm{\xi} \dif\bm{q} = \\
			={}& \int_{Q_{\tau}} \int_{\DomQ} \abs{\bigl(\mathcal{F}(\phi_{\bm{q}})\bigr)(\bm{q}-2\bm{\xi})}^{2} \dif\bm{\xi} \dif\bm{q} \, .
		\end{split}
	\end{equation}
	
	We are ready to study the $\operatorname{L}^{2}$-norm of $\underline{\mathcal{B}}'_{\tau}(\phi)$.
	Using Cauchy-Schwarz we obtain
	\begin{equation}
		\begin{split}
			\abs{\bigl(\underline{\mathcal{B}}'_{\tau} (\phi)\bigr)(\bm{q})}^{2} ={}&
			\abs{\int_{\mathcal{A}_{0}^{\mathbb{C}}} \phi(\bm{z}') e^{-\frac{\abs{t}}{2} \abs{\bm{q}-\bm{q}'}^{2}} e^{\frac{i\abs{t}}{2} \bm{p}'\cdot(2\bm{q}-\bm{q}')} \dif\bm{z}'}^{2} \doteq\\
			\doteq{}& \abs{\int_{Q_{\tau}} \bigl(\mathcal{F}(\phi_{\bm{q}'})\bigr)(\bm{q}'-2\bm{q}) e^{-\frac{\abs{t}}{2} \abs{\bm{q}-\bm{q}'}^{2}} \dif\bm{q}'}^{2} \leq\\
			\leq{}& C \int_{Q_{\tau}} \abs{\bigl(\mathcal{F}(\phi_{\bm{q}'})\bigr)(\bm{q}'-2\bm{q}) }^{2}\dif\bm{q}'
		\end{split}
	\end{equation}
	for some positive constant $C$.
	But then
	\begin{equation}
		\norm{\underline{\mathcal{B}}'_{\tau}(\phi)}^{2}_{\operatorname{L}^{2}_{k}}
		\leq \frac{C}{\abs{W}} \int_{\DomQ} \int_{Q_{\tau}} \abs{\bigl(\mathcal{F}(\phi_{\bm{q}'})\bigr)(\bm{q}'-2\bm{q}) }^{2}\dif\bm{q}' \dif\bm{q}
		\doteq \norm{\phi}^{2}_{\operatorname{L}^{2,\mathbb{C}}_{t}} \, .
	\end{equation}
	Therefore $\underline{\mathcal{B}}'$ is bounded on the dense space of smooth compactly supported sections.
	On this dense subspace, moreover, Fubini-Tonelli applies for every $\bm{z}$, giving
	\begin{equation}
		\begin{split}
			\Bigl(\underline{\mathcal{B}}_{\tau}\bigl(\underline{\mathcal{B}}'_{\tau}(\phi)\bigr)\Bigr)(\bm{z}) ={}&
			\int_{\mathcal{A}_{0}} \int_{\mathcal{A}_{0}^{\mathbb{C}}} \phi(\bm{z}') \overline{B(\bm{q}'',\bm{z}')} B(\bm{q}'', \bm{z}) \dif\bm{z}' \dif\bm{q}'' =\\
			={}& \int_{\mathcal{A}_{0}^{\mathbb{C}}} \phi(\bm{z}') \int_{\mathcal{A}_{0}} \overline{B(\bm{q}'',\bm{z}')} B(\bm{q}'', \bm{z}) \dif\bm{q}'' \dif\bm{z}' = T_{\bm{z}} \phi \, .
		\end{split}
	\end{equation}
	It follows from continuity that $S_{\bm{z}} \circ \underline{\mathcal{B}}'_{\tau} = T_{\bm{z}}$ on $\operatorname{L}^{2,\mathbb{C}}_{t}$, and in particular on $\mathcal{H}_{\tau,t}^{\mathbb{C}}$.
\end{proof}

\section{Identifications of the connections on the moduli spaces}
\label{sec:duals}

In this section we use the previous results to identify the Hitchin--Witten and complexified Hitchin connections as intrinsically defined on the moduli spaces.
The arguments that used the $\operatorname{L}^2$-property can be adapted building on~\cite[\S~3.5]{andersen_1992_jones_witten_theory}.
We recall the convention that polarised sections, without further specifications, will refer to $P_{\tau}$---Kähler-polarised objects will be called holomorphic.

Let $\mathcal{S} = \mathcal{S}_{k}$ be the space of smooth sections of ${\mathcal{L}}_{k} \to \mathcal{A}_{0}$ whose point-wise norm squared function is Schwartz-class.
Analogously, let $\mathcal{S}_{\mathbb{C}} = \mathcal{S}_{\tau,t,\mathbb{C}}$ be the space of Schwartz-class holomorphic sections of ${\mathcal{L}}^{\mathbb{C}}_{t} \to \mathcal{A}_{0}^{\mathbb{C}}$.
These spaces embed densely inside the quantum spaces $\widetilde{\operatorname{L}}^{2}_{k}$ and $\widetilde{\mathcal{H}}^{\mathbb{C}}_{\tau,t}$ of \S\S~\ref{sec:real_quantisation} and~\ref{sec:kaehler_quantisation}, respectively, for $\tau \in \mathcal{T}$.

As a consequence of its fundamental properties, the Bargmann transform restricts to a morphism of Fr\'{e}chet spaces $\mathcal{B}_{\tau} \colon \mathcal{S} \to \mathcal{S}_{\mathbb{C}}$, defining a \emph{transpose} map $\prescript{t}{}{\mathcal{B}}_{\tau} \colon \mathcal{S}'_{\mathbb{C}} \to \mathcal{S}'$ between the topological duals.
It is also easy to check that the lifts of elements of $\operatorname{L}_{k}^{2}$ and $\mathcal{H}^{\mathbb{C}}_{\tau,t}$ have finite $\operatorname{L}^{2}$-pairing with elements of $\mathcal{S}$ and $\mathcal{S}_{\mathbb{C}}$, respectively, resulting in embeddings $\iota$ and $\iota^{\mathbb{C}}$ as in the diagram in Fig.~\ref{fig:diagram}.

\begin{figure}[h]
	\begin{tikzpicture}
		\node (H) at (0,0) {$\operatorname{L}^{2}_{k}$};
		\node (HC) at (3,0) {$\mathcal{H}_{\tau,t}^{\mathbb{C}}$};
		\node (S) at (0,-2) {$\mathcal{S}_{k}'$};
		\node (SC) at (3,-2) {$\mathcal{S}'_{\tau,t,\mathbb{C}}$};
		
		\draw[->] (H) -- node[above]{$\underline{\mathcal{B}}_{\tau}$} (HC);
		\draw[->] (H) -- node[left]{$\iota$}(S);
		\draw[->] (SC) -- node[below]{$\prescript{t}{}{\mathcal{B}}_{\tau}$} (S);
		\draw[->] (HC) -- node[right]{$\iota^{\mathbb{C}}$}(SC);
	\end{tikzpicture}
	\caption{Comparison between polarised sections and tempered distributions}
	\label{fig:diagram}
\end{figure}

\begin{lem}
	The diagram of Fig.~\ref{fig:diagram} is commutative.
\end{lem}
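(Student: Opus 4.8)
The plan is to reduce the commutativity of the square to a single analytic identity: that the Bargmann transform preserves the $\operatorname{L}^{2}$-pairing between a bounded equivariant section and a Schwartz-class one. First I would unwind all four arrows into pairings over the covering spaces. For $\psi \in \operatorname{L}^{2}_{k}$ and a test section $\eta \in \mathcal{S}_{k}$, the two downward embeddings act by the $\operatorname{L}^{2}$-pairing over $\mathcal{A}_{0}$ (resp. $\mathcal{A}_{0}^{\mathbb{C}}$), while the transpose is defined by $\langle \prescript{t}{}{\mathcal{B}}_{\tau} F , \eta \rangle = \langle F , \mathcal{B}_{\tau} \eta \rangle$. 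Chasing $\psi$ along the long side of the square thus produces the functional $\eta \mapsto \langle \mathcal{B}_{\tau} \psi , \mathcal{B}_{\tau} \eta \rangle_{\operatorname{L}^{2}(\mathcal{A}_{0}^{\mathbb{C}})}$, whereas the short side produces $\eta \mapsto \langle \psi , \eta \rangle_{\operatorname{L}^{2}(\mathcal{A}_{0})}$. Commutativity is therefore equivalent to
\begin{equation}
    \langle \mathcal{B}_{\tau} \psi , \mathcal{B}_{\tau} \eta \rangle_{\operatorname{L}^{2}(\mathcal{A}_{0}^{\mathbb{C}})} = \langle \psi , \eta \rangle_{\operatorname{L}^{2}(\mathcal{A}_{0})}
\end{equation}
for every lift $\psi$ of an element of $\operatorname{L}^{2}_{k}$ and every $\eta \in \mathcal{S}_{k}$.

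Next I would establish this identity from the explicit integral kernel recalled in \S~\ref{sec:bargmann_transform}. Writing $(\mathcal{B}_{\tau} \chi)(\bm{z}) = \int B_{\tau}(\bm{z} , \bm{q}) \chi(\bm{q}) \dif \vol \bm{q}$, the unitarity of the Bargmann transform on the covering-space quantum Hilbert spaces is equivalent to the reproducing-kernel relation $\int \overline{B_{\tau}(\bm{z} , \bm{q})} B_{\tau}(\bm{z} , \bm{q}') \dif \mu_{\tau}(\bm{z}) = \delta(\bm{q} - \bm{q}')$, where $\dif \mu_{\tau} = \sigma_{\tau}^{2}$ is the Gaussian measure defining the inner product on $\mathcal{H}^{\mathbb{C}}_{\tau,t}$. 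Substituting the two kernel expressions and integrating first in $\bm{z}$ collapses the resulting double $\bm{q}$-integral against this reproducing kernel, returning $\langle \psi , \eta \rangle_{\operatorname{L}^{2}}$.

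The delicate point, which I expect to be the main obstacle, is that the lift $\psi$ is $\mathcal{K}_{0}$-equivariant and hence merely bounded---not $\operatorname{L}^{2}$---on the non-compact cover $\mathcal{A}_{0}$, so the verbatim unitarity statement $\lVert \mathcal{B}_{\tau} \psi \rVert = \lVert \psi \rVert$ is unavailable. What rescues the computation is that $\eta$, and hence $\mathcal{B}_{\tau} \eta$, is Schwartz-class: combined with the boundedness of $\psi$ and the Gaussian decay of both the kernel $B_{\tau}$ and the measure $\mu_{\tau}$, this makes the iterated integral in $(\bm{z} , \bm{q} , \bm{q}')$ absolutely convergent, so that Fubini justifies interchanging the $\bm{z}$- and $\bm{q}$-integrations. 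I would record these decay estimates explicitly, and observe that the $\mathcal{K}_{0}$-equivariance of the kernel---already used in \S~\ref{sec:duals} to descend $\mathcal{B}_{\tau}$ to the moduli space---guarantees the covering-space computation is compatible with the descent, so that the equality of functionals on $\mathcal{S}_{k}$ is precisely the asserted commutativity.
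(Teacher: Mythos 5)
Your first step---reducing commutativity of the square to the single identity $\bigl( \mathcal{B}_{\tau}(\psi) \bigm| \mathcal{B}_{\tau}(\eta) \bigr) = (\psi|\eta)$ for $\psi$ a bounded $\mathcal{K}_{0}$-equivariant lift of an element of $\operatorname{L}^{2}_{k}$ and $\eta \in \mathcal{S}_{k}$---is exactly the paper's, and is fine. The gap is in how you propose to prove that identity. The triple integral in $(\bm{z},\bm{q},\bm{q}')$ is \emph{not} absolutely convergent, so Fubini does not authorise doing the $\bm{z}$-integration first. Indeed, from~\eqref{eq:bargmann} one computes, writing $\bm{z}=\bm{x}+i\bm{y}$ and $\hslash = 1/\abs{t}$,
\begin{equation}
	\abs{B_{\tau}(\bm{z},\bm{q})}\,\abs{B_{\tau}(\bm{z},\bm{q}')}\,\sigma_{\tau}^{2}(\bm{z}) \;\propto\; \exp\left(-\frac{\abs{t}}{2}\left(\abs{\bm{q}-\bm{y}}^{2}+\abs{\bm{q}'-\bm{y}}^{2}\right)\right) ,
\end{equation}
which has \emph{no decay at all} in the direction $\bm{x}$ of the real polarisation: the kernel's modulus grows like $e^{\abs{t}\abs{\bm{x}}^{2}/4}$, and two copies of it exactly saturate the Gaussian weight $\sigma_{\tau}^{2}$. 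Hence the $\bm{z}$-integral of the absolute value is $+\infty$ for \emph{every} pair $(\bm{q},\bm{q}')$, independently of the boundedness of $\psi$ and the Schwartz decay of $\eta$. For the same reason your ``reproducing-kernel relation'' $\int \overline{B_{\tau}(\bm{z},\bm{q})}B_{\tau}(\bm{z},\bm{q}')\,\sigma_{\tau}^{2}\dif\vol(\bm{z}) = \delta(\bm{q}-\bm{q}')$ is only a distributional statement about a non-absolutely-convergent oscillatory integral; it is precisely as hard to justify as the unitarity you are invoking it to reprove, so it cannot carry the argument.

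There are two ways to close this. The paper sidesteps kernels entirely: fix a fundamental domain $\mathcal{D}\subset\mathcal{A}_{0}$ for $\mathcal{K}_{0}$ and decompose $\psi = \sum_{g}\psi\chi_{g}$ with $\chi_{g}$ the indicator of $g.\mathcal{D}$; each summand is bounded with compact support, hence honestly $\operatorname{L}^{2}$, so the standard unitarity of the linear Bargmann transform applies term by term against the fixed $\eta$, and two applications of dominated convergence resum the series. Alternatively, your kernel computation can be repaired by unwinding only \emph{one} of the two transforms: since $\eta$ is Schwartz, $\abs{\mathcal{B}_{\tau}(\eta)}\,\sigma_{\tau}$ is rapidly decreasing, and the \emph{double} integral
\begin{equation}
	\int\!\!\int \abs{B_{\tau}(\bm{z},\bm{q})}\,\abs{\psi(\bm{q})}\,\abs{\mathcal{B}_{\tau}(\eta)(\bm{z})}\,\sigma_{\tau}^{2}(\bm{z})\dif\vol(\bm{q})\dif\vol(\bm{z})
\end{equation}
\emph{is} absolutely convergent, because $\abs{B_{\tau}(\bm{z},\bm{q})}\sigma_{\tau}^{2}(\bm{z}) \propto e^{-\abs{t}\abs{\bm{q}-\bm{y}}^{2}/2}\,\sigma_{\tau}(\bm{z})$ and the $\bm{q}$-Gaussian integrates to a constant against the bounded $\psi$, leaving the integrable function $\abs{\mathcal{B}_{\tau}(\eta)}\sigma_{\tau}$. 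Fubini then gives $\bigl(\mathcal{B}_{\tau}(\psi)\bigm|\mathcal{B}_{\tau}(\eta)\bigr) = \bigl(\psi\bigm|\mathcal{B}_{\tau}^{*}\mathcal{B}_{\tau}(\eta)\bigr) = (\psi|\eta)$, using $\mathcal{B}_{\tau}^{*}\mathcal{B}_{\tau}=\operatorname{Id}$ on $\operatorname{L}^{2}$. Either way, the convergence must come from pairing the non-$\operatorname{L}^{2}$ object $\psi$ against something that already decays, not from a pointwise delta-function identity for the kernel.
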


\begin{proof}
    We must show that $\bigl( \underline{\mathcal{B}}_{\tau} (\psi_1) \bigm| \mathcal{B}_{\tau} (\psi_2) \bigr) = ( \psi_1 | \psi_2 )$, for $\psi_1 \in \operatorname{L}^{2}_{k}$ and $\psi_2 \in \mathcal{S}$.
    
	For every $a \in \mathcal{T}_{0}$ let $\chi_{a}$ denote the indicator function of $\mathcal{D}+a$, so that $\psi \chi_{a}$ is $\operatorname{L}^2$ on $\mathcal{A}_{0}$.
	By dominated convergence and unitarity of $\mathcal{B}_{\tau}$, we then have
	\begin{equation}
		( \psi_1 | \psi_2 ) = \sum_{a \in \mathcal{T}_{0}} ( \psi_1 \chi_{a} | \psi_2 ) = \sum_{a \in \mathcal{T}_{0}} \bigl( \mathcal{B}_{\tau} (\psi_1 \chi_{a}) \bigm| \mathcal{B}_{\tau} (\psi_2) \bigr) \, .
	\end{equation}
	Dominated convergence also yields $\sum_{a} \mathcal{B}_{\tau} \bigl( \psi_1 \chi_{a} \bigr) = \underline{\mathcal{B}}_{\tau} (\psi_{1})$, point-wise on $\mathcal{A}_{0}^{\mathbb{C}}$, and all finite partial sums are uniformly bounded in absolute by a constant, so
	\begin{equation}
		\sum_{a \in \mathcal{T}_{0}} \bigl( \mathcal{B}_{\tau} (\psi_{1} \chi_{a}) \bigm| \mathcal{B}_{\tau} (\psi_{2}) \bigr) = \Biggl( \sum_{a \in \mathcal{T}_{0}} \mathcal{B}_{\tau} (\psi_{1} \chi_{a}) \Biggm| \mathcal{B}_{\tau} (\psi_{2}) \Biggr)
		= \bigl( \underline{\mathcal{B}}_{\tau} (\psi_{1}) \bigm| \mathcal{B}_{\tau} (\psi_{2}) \bigr) \, .
	\end{equation}
\end{proof}

Now we consider the dual versions of the Hitchin--Witten and complexified Hitchin connections, as follows.
Suppose $T$ is a $\mathcal{T}$-family of elements of $\mathcal{S}'$, such that for every test section $\psi \in \mathcal{S}$ the pairing $(T | \psi)$ is smooth over $\mathcal{T}$.
Then set
\begin{equation}
\label{eq:dual_HW_connection}
	\bigl( \check{\nabla}^{\HW}_{V} T \bigm| \psi \bigr) \coloneqq V \left[ ( T | \psi ) \right] - \bigl( T \bigm| \nabla^{\HW}_{V} \psi \bigr) \, ,
\end{equation}
where $V$ is a vector field on $\mathcal{T}$.
This defines the \emph{dual} Hitchin--Witten connection.

The \emph{dual} complexified Hitchin connection $\check{\nabla}^{\mathbb{C}}$ is defined analogously, with the caveat that the test section $\psi \in \mathcal{S}_{\mathbb{C}}$ needs to be extended to a $\tau$-dependent family in order for the right-hand side to make sense.

\begin{lem}
	The dual Bargmann transform intertwines the dual Hitchin--Witten and the complexified Hitchin connections.
\end{lem}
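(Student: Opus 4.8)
The plan is to obtain the statement as a purely formal transpose of the intertwining already established on the covering spaces, so that the only genuine work lies in the analytic justification of the pairings and of their differentiation along $\mathcal{T}$. The one nontrivial input I would use is the operator identity
\[
	\mathcal{B}_{\tau} \circ \nabla^{\HW}_{V} = \nabla^{\mathbb{C}}_{V} \circ \mathcal{B}_{\tau} \, ,
\]
valid on sufficiently regular $\mathcal{T}$-families of polarised sections: this is exactly Thm.~\ref{thm:l^2=HWfull} combined with the identification $\nabla^{\operatorname{L}^{2}} = \nabla^{\mathbb{C}}$ of \S~\ref{sec:l^2_connection}, where I understand the restriction to ${Q}_{\tau}$ and the polarised extension $\Ext$ as part of the function-theoretic description of the sections. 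I will refer to this intertwining as $(\star)$.

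First I would fix a $\mathcal{T}$-family $T$ of elements of $\mathcal{S}'_{\mathbb{C}}$ and an arbitrary test section $\psi \in \mathcal{S}$, and unwind the definitions. By the defining formula~\eqref{eq:dual_HW_connection} for the dual Hitchin--Witten connection together with the definition of the transpose $\prescript{t}{}{\mathcal{B}}_{\tau}$ one has
\[
	\bigl( \check{\nabla}^{\HW}_{V}(\prescript{t}{}{\mathcal{B}}_{\tau} T) \bigm| \psi \bigr) = V\bigl[ (T \mid \mathcal{B}_{\tau}\psi) \bigr] - \bigl( T \bigm| \mathcal{B}_{\tau}(\nabla^{\HW}_{V}\psi) \bigr) \, .
\]
Writing $\Phi_{\tau} \coloneqq \mathcal{B}_{\tau}\psi$, which is genuinely a $\tau$-dependent family precisely because the Bargmann transform varies over Teichm\"{u}ller space, I would then apply the Leibniz rule encoded in the definition of the dual complexified Hitchin connection to expand the first term as $V\bigl[ (T \mid \Phi) \bigr] = \bigl( \check{\nabla}^{\mathbb{C}}_{V} T \bigm| \Phi \bigr) + \bigl( T \bigm| \nabla^{\mathbb{C}}_{V}\Phi \bigr)$.

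The decisive step is to invoke $(\star)$ in the form $\mathcal{B}_{\tau}(\nabla^{\HW}_{V}\psi) = \nabla^{\mathbb{C}}_{V}\Phi$, so that the two terms carrying a covariant derivative of $\Phi$ cancel and one is left with
\[
	\bigl( \check{\nabla}^{\HW}_{V}(\prescript{t}{}{\mathcal{B}}_{\tau} T) \bigm| \psi \bigr) = \bigl( \check{\nabla}^{\mathbb{C}}_{V} T \bigm| \mathcal{B}_{\tau}\psi \bigr) = \bigl( \prescript{t}{}{\mathcal{B}}_{\tau}(\check{\nabla}^{\mathbb{C}}_{V} T) \bigm| \psi \bigr) \, .
\]
Since $\psi \in \mathcal{S}$ is arbitrary this yields $\check{\nabla}^{\HW}_{V} \circ \prescript{t}{}{\mathcal{B}}_{\tau} = \prescript{t}{}{\mathcal{B}}_{\tau} \circ \check{\nabla}^{\mathbb{C}}_{V}$, which is the asserted intertwining.

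The main obstacle I anticipate is not algebraic but analytic: one must ensure that every pairing above is finite and smooth in $\tau$, that the vector field $V$ may be brought inside the pairing (the Leibniz rule I used twice), and that $\mathcal{B}_{\tau}$ commutes with $V$ in the sense of~\eqref{eq:bargdercommute}. These are exactly the regularity hypotheses packaged into Thm.~\ref{thm:l^2=HWfull} and into the boundedness and density estimates behind Fig.~\ref{fig:diagram}, so I would discharge them by the dominated-convergence arguments already used in the commutativity lemma for that diagram, applied termwise to $T$ and to $\mathcal{B}_{\tau}\psi$. A secondary point to verify is the bookkeeping of the restriction to ${Q}_{\tau}$ and of the polarised extension appearing in $(\star)$: since the transpose Bargmann transform is built from the same $\operatorname{L}^{2}$-pairing that renders the diagram of Fig.~\ref{fig:diagram} commutative, these identifications are compatible with the duality and contribute no extra terms.
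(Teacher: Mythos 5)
Your proposal is correct and follows essentially the same route as the paper: both arguments reduce the claim to the operator identity $\mathcal{B}_{\tau}\circ\nabla^{\HW}_{V}=\nabla^{\mathbb{C}}_{V}\circ\mathcal{B}_{\tau}$ from Thm.~\ref{thm:l^2=HWfull} (via the $\operatorname{L}^{2}$-connection), unwind the definitions of the dual connections and of the transpose map against a test section $\psi\in\mathcal{S}$, and discharge the regularity hypotheses by observing that a $\tau$-independent Schwartz-class $\psi$ has Schwartz-class $\frac{\delta\psi}{\delta\tau}$, so that dominated convergence justifies~\eqref{eq:bargdercommute}. The only cosmetic difference is that you start from the Hitchin--Witten side and cancel via the Leibniz rule, whereas the paper starts from $\prescript{t}{}{\mathcal{B}}_{\tau}\bigl(\check{\nabla}^{\mathbb{C}}_{V}T\bigr)$ and substitutes directly.
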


\begin{proof}
	Let $V$ be a vector field on $\mathcal{T}$, $T$ a smooth $\mathcal{T}$-family of elements of $\mathcal{S}'_{\mathbb{C}}$, $\psi \in \mathcal{S}$ a test section.
	By~\eqref{eq:dual_HW_connection} and the defining property of the transpose map, one has
	\begin{equation}
		\Bigl( \prescript{t}{}{\mathcal{B}}_{\tau} \bigl( \check{\nabla}^{\mathbb{C}}_{V} T \bigr) \Bigm| \psi \Bigr) 
        = V \left[ \bigl( T \bigm| \mathcal{B}_{\tau} (\psi) \bigr) \right] - \bigl( T \bigm| \nabla^{\mathbb{C}}_{V} \mathcal{B}_{\tau} (\psi) \bigr) \, .
	\end{equation}

	Since $\psi$ is a $\tau$-independent function the sections $\frac{\delta \psi}{\delta \tau}$ and $\frac{\delta \psi}{\delta \overline{\tau}}$ can be expressed in terms of derivatives along $\mathcal{A}_{0}^{\mathbb{C}}$ and coordinate multiplication.
	Therefore, $\frac{\delta \psi}{\delta \tau}$ and $\frac{\delta \psi}{\delta \overline{\tau}}$ are still Schwartz-class, and a standard argument using dominated convergence implies that the hypotheses of Thm.~\ref{thm:l^2=HWfull} hold for $\psi$.
	It follows that
	\begin{equation}
			\Bigl( \prescript{t}{}{\mathcal{B}}_{\tau} \bigl( \check{\nabla}^{\mathbb{C}}_{V} T \bigr) \Bigm| \psi \Bigr)
			= \Bigl( \check{\nabla}^{\HW}_{V} \prescript{t}{}{\mathcal{B}}_{\tau} (T)  \Bigm| \psi \Bigr) \, .
	\end{equation}
\end{proof}

Thus far we have proved that the dual Bargmann transform intertwines $\check{\nabla}^{\HW}$ and $\check{\nabla}^{\mathbb{C}}$.
In order to conclude the proof of Thm.~\ref{thm:CH=HW}, all that is left to do is to relate these two connections, obtained working by duality on the cover $\mathcal{A}_{0}^{(\mathbb{C})}$, with the connections defined intrinsically on the moduli spaces.

\begin{prop}
	The embeddings $\iota$ and $\iota^{\mathbb{C}}$ intertwine the Hitchin-Witten and complexified Hitchin connections $\nabla^{\HW}$ and $\nabla^{\mathbb{C}}$ with the duals of their lifted versions.
\end{prop}

\begin{proof}
	By~\eqref{eq:dual_HW_connection}, if $\psi$ is a smooth $\mathcal{T}$-family of $\mathcal{K}_{0}$-equivariant sections on $\mathcal{A}_{0}$ and $\psi_{0} \in \mathcal{S}'$ is a fixed test function we have that
	\begin{equation}
		\Bigl( \check{\nabla}^{\HW}_{V} (\iota \psi) \Bigm| \psi_{0} \Bigr)
		= V \left[ \int_{\mathcal{A}_{0}} \psi \cdot \overline{\psi}_{0} \dif \vol \right] - \int_{\mathcal{A}_{0}} \psi \cdot \overline{ \nabla^{\HW}_{V} \psi_{0}} \dif \vol \, ,
	\end{equation}
	for every \emph{real} tangent vector $V$ on $\mathcal{T}$.
	Again by compactness of $\mathcal{M}_{\fl}$, $\psi$ is bounded, uniformly in $\tau$ up to restricting to appropriate open subsets of $\mathcal{T}$, and the same applies to its derivatives along the direction of $V$.
	By dominated convergence then 
	\begin{equation}
		V \left[ \int_{\mathcal{A}_{0}} \psi \cdot \overline{\psi}_{0} \dif \vol \right] = \int_{\mathcal{A}_{0}} V \bigl[ \psi \cdot \overline{\psi}_{0} \bigr] \dif \vol = \bigl( V[\psi] \bigm| \psi_{0} \bigr) \, .
	\end{equation}
	On the other hand, the action of the Hitchin-Witten connection on $\psi$ reduces to that of its potential, and we obtain
	\begin{equation}
		\int_{\mathcal{A}_{0}} \psi \cdot \overline{ \nabla^{\HW}_{V} \psi_{0}} \dif \vol = \int_{\mathcal{A}_{0}} \left( u^{\HW} (V) \psi \right) \cdot \overline{\psi}_{0} \dif \vol = \bigl( \iota \bigl( u^{\HW}(V) \psi \bigr) \bigm| \psi_{0} \bigr) \, ,
	\end{equation}
	by integration by parts---using the fast decay of $\psi_{0}$.
	Overall
	\begin{equation}
		\Bigl( \check{\nabla}^{\HW}_{V} (\iota \psi) \Bigm| \psi_{0} \Bigr) = \Bigm( V[\psi] - \frac{1}{2} u^{\HW} (V) \psi \Bigm| \psi_{0} \Bigr) = \Bigl( \iota \Bigl(\nabla^{\HW}_{V} \psi \Bigr) \Bigm| \psi_{0} \Bigr) \, .
	\end{equation}

	For the K\"{a}hler-polarised case, a test section $\varphi \in \mathcal{S}'_{\mathbb{C}}$ may not be fixed independently of $\tau$, but requiring that $\frac{\delta \varphi}{\delta \tau} = \frac{\delta \varphi}{\delta \tau} = 0$ the same arguments apply.
\end{proof}

Putting together the statements of this section we have proved Thm.~\ref{thm:CH=HW}.

\bibliographystyle{amsplain}

\providecommand{\bysame}{\leavevmode\hbox to3em{\hrulefill}\thinspace}
\providecommand{\MR}{\relax\ifhmode\unskip\space\fi MR }
\providecommand{\MRhref}[2]{%
  \href{http://www.ams.org/mathscinet-getitem?mr=#1}{#2}
}
\providecommand{\href}[2]{#2}

\end{document}